\pgfplotsset{compat=1.18}
\pgfplotsset{compat=1.18}
\definecolor{mygray}{gray}{0.95}
\newtheorem{theorem}{Theorem}     
\newtheorem{definition}[theorem]{Definition}
\newtheorem{proposition}[theorem]{Proposition}
\newtheorem{lemma}[theorem]{Lemma}
\newtheorem{remark}[theorem]{Remark}
\newcommand{\eqa}{\begin{eqnarray}}
	\newcommand{\eeqa}{\end{eqnarray}}
\newcommand{\beq}{\begin{equation}}
	\newcommand{\eeq}{\end{equation}}
\begin{document}
\title{Contact Geometry of the Restricted Three-Body Problem on $\mathbb{S}^2$}
\author[1]{Alessandro Arsie}
\author[2]{K\"ur\c{s}at Y\i lmaz\footnote{corresponding author}}

\affil[1]{Department of Mathematics and Statistics,  The University of Toledo, Toledo,  OH,  USA, e-mail: alessandro.arsie@utoledo.edu}
\affil[2]{Department of Mathematics,  Washington and Lee University,  Lexington, VA,  USA, e-mail: kyilmaz@wlu.edu}
	
\date{}
	
	\maketitle
 Abstract: We study the contact geometry of the connected components of the energy hypersurface, in the symmetric restricted 3-body problem on $\mathbb{S}^2$, for a specific type of motion of the primaries. In particular, we show that these components are of contact type for all energies below the first critical value and slightly above it. At some critical steps, this is achieved using a computer-assisted proof in the form of validated numerics.
 
 We prove that these components, suitably compactified using a Moser-type regularization are contactomorphic to $\mathbb{RP}^3$ with its unique tight contact structure or to the connected sum of two copies of it, depending on the value of the energy. We exploit Taubes' solution of the Weinstein conjecture in dimension three, to infer the existence of periodic orbits in all these cases.
 \section{Introduction}
The history of the $N$-body problem on surfaces of constant nonzero curvature can be traced back to the 19th century through the works of Lobachevsky \cite{Lobachevsky} and Bolyai \cite{Boyai}. On a curved space, the gravitational attraction has to be modeled differently compared to what is usually done in a Euclidean setting, and there are in principle different reasonable choices for the modeling. One of the first explorers in this regard was Serret \cite{Serret}, who modeled the gravitational force, and consequently all the equations of motion, to $\mathbb{S}^2$, studying the corresponding Kepler problem. Much more work has been done in recent years on this topic. For a more detailed exploration of the $N$-body problem on curved spaces, we refer to \cite{Diacu1, Diacu2, Diacu3, Diacu4} and also \cite{Andrade} and \cite{Borisov}.

In contrast to the general problem, the restricted three-body problem consists of two primaries that attract each other according to gravitational forces in the ambient space, and their motions are known. The main focus is on the dynamics of a third body, which has considerably small mass and therefore does not have any significant impact on the motion of the two primaries.

The literature on the restricted three-body problem on \(\mathbb{S}^2\) is still rather limited,
especially when compared with the broader body of work on the \(N\)-body problem in
spaces of constant curvature. Most of the papers directly concerned with the restricted
problem on \(\mathbb{S}^2\) have focused on setting up the curved model, locating and analyzing
its equilibria, and understanding the local and perturbative dynamics. Early work of
Kilin considered libration points for the restricted problem on \(\mathbb{S}^2\),
showing in particular that positive curvature produces new equilibrium configurations
and a non-classical dependence on the parameters of the system (see \cite{Kilin}). Later, Mart\'inez
and Sim\'o studied relative equilibria of the restricted three-body problem in curved
spaces, including the spherical case, and described the corresponding bifurcations and
changes of spectral stability (see \cite{Martinez}). In the symmetric spherical setting most closely
related to the present paper, Andrade, P\'erez-Chavela and Vidal developed a Hamiltonian
formulation of the problem on surfaces of constant curvature and analyzed several
aspects of the global dynamics, proving in particular the existence of families of periodic
orbits and associated KAM \(2\)-tori (see \cite{Andrade}). See also Andrade--Vidal for a detailed study of
the polar equilibria on \(\mathbb{S}^2\), and Andrade--P\'erez-Chavela--Vidal for the regularization
of collision singularities in the circular problem, see \cite{ALM} and \cite{AndreadeVidal}. Thus, although some important
results are known, there is still not a large literature specifically devoted to the
restricted three-body problem on \(\mathbb{S}^2\), and in particular general existence results for
periodic orbits remain comparatively scarce. In contrast with the mainly dynamical,
local, or perturbative approaches above, the present paper uses tools from Symplectic
and Contact Topology, together with a Moser-type regularization, to deduce the
existence of periodic orbits from the contact-topological properties of the regularized
energy hypersurfaces.

In \cite{AL}, the authors started a program to use tools from Symplectic and Contact Topology to tackle some notoriously difficult issues in the analysis of the behavior of the restricted three-body problem. In particular, they showed that the energy hypersurfaces of the planar circular restricted three-body problem are three-dimensional non-compact manifolds and are all of contact type if the energy is below the first critical value or slightly above it. However, this is not sufficient to guarantee the existence of periodic orbits. To overcome this obstacle, they implemented a version of the Moser's regularization technique \cite{Moser}, to extend the dynamics to a closed manifold.

Our investigation deals with a restricted three-body problem on a surface of constant positive curvature $\kappa$, again using tools from Symplectic and Contact Topology to study the nature of the energy hypersurfaces and the existence of periodic orbits. Recall that any embedded surface with positive constant curvature $\kappa$ is a sphere $\mathbb{S}^2_r$ with radius $r=\sqrt{\frac{1}{\kappa}}$, embedded in $\mathbb{R}^3$. In this paper, we analyze the contact behavior of the energy hypersurfaces of the restricted three-body problem on such surfaces for the symmetric problem in which the masses of the primaries are equal. We focus only on the surface with Gaussian curvature $\kappa=1$; that is, we work on the unit sphere $\mathbb{S}^2$ embedded in $\mathbb{R}^3$. This can be extended to any positive value of $\kappa$ simply by rescaling. 

To simplify the equations, we normalize the masses so that the total mass is 1, which results in masses $m_1=m_2=1/2$. We consider a circular relative equilibrium, where the primaries rotate around each other at the same latitude with respect to the equator with a fixed angular velocity $\omega$ (see Figure \ref{fig:sphere}).

\begin{figure}[H]
	\begin{center}
		\IfFileExists{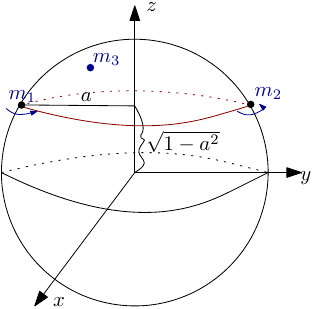}{\includegraphics[scale=1]{spherical.pdf}}{\begin{tikzpicture}[scale=1.0]\draw[thick] (0,0) circle (1.25);\draw[->] (-1.6,0)--(1.8,0) node[right] {$x$};\draw[->] (0,-1.5)--(0,1.7) node[above] {$z$};\fill (-0.55,0.55) circle (2pt) node[left] {$m_1$};\fill (0.55,0.55) circle (2pt) node[right] {$m_2$};\draw[dashed] (-0.9,0.55)--(0.9,0.55);\end{tikzpicture}}
		\caption{The symmetric restricted three-body problem on $ \mathbb{S}^2. $}
		\label{fig:sphere}
	\end{center}
\end{figure}

Due to the spherical metric and the presence of antipodal singularities, our Hamiltonian (see \eqref{H}) is substantially more complicated than the one analyzed in the planar case in \cite{AL}. We will not derive the Hamiltonian from scratch and instead refer the reader to \cite{ALM} for more details about its construction. We focus on the case where the motion of the primaries takes place on a circle with a radius of $ a=1/\sqrt{2} $ (and $ z=1/\sqrt{2} $); as shown in \cite{Andrade}, for this value of $ a $, there are only two critical points of the Hamiltonian, one located at the north pole and the other at the south pole. The north pole is the location in the configuration space of the first Lagrange point, which we denote by $ L_1 $ since the motion of the primaries takes place in the northern hemisphere.  We consider a regular value $ c $ of the Hamiltonian $H$ and examine the energy hypersurface $ H^{-1}(c) $ when $ c $ is below the value assumed by the Hamiltonian at the first Lagrange point and slightly above it. We note that if $ c<H(L_1) $, then the energy hypersurface $ H^{-1}(c) $ has two connected components that bound a region around the masses $ m_1 $ and $ m_2 $. We denote them by $ \Sigma_{c}^{m_1} $ and $ \Sigma_{c}^{m_2} $, respectively. If the value of $ c $ is slightly above $H(L_1)$, then the energy hypersurface has only one connected component, and we denote it by $ \Sigma_{c}$ (or to be safer, we consider as $\Sigma_{c}$ the relevant low-energy component containing both primaries, since in principle there might be other connected components around the antipodal singularities which we disregard, since the third body cannot reach them for levels of energy just above $H(L_1)$).

We restrict our analysis to this highly symmetrical configuration, i.e.\ the case
of equal masses moving along a rotating relative equilibrium, because the
two-body problem on a sphere is not integrable in general and for our approach
we need explicit rotating relative equilibria.  The geometry and dynamics of
the (unrestricted) two-body problem on $\mathbb{S}^2$ have been studied in detail by
Arsie and Balabanova: in
\cite{ArsieBalabanova-collisions} they analyse collision trajectories and
construct a regularisation via weighted blow-up, and in
\cite{ArsieBalabanova-geometry} they determine the topology of the
common level sets of the Hamiltonian and the Casimir and prove that, for
sufficiently negative energy, these level sets carry a global contact form
arising from a suitable Liouville vector field.  These results provide the
direct two-body antecedent of the contact-type analysis we carry out below for
the restricted three-body problem.

Other less symmetric rotating relative equilibria exist also for other ratios of the masses (see the very detailed \cite{Andrade} and \cite{Borisov2}). In principle, we expect similar results to hold also for a non-highly symmetric case as the one we are dealing with. 

All these relative rotating equilibria belong in general to families depending on parameters; it would be interesting to extend our results to the families themselves, but in that case the analytical estimates used here would have to be replaced completely by a treatment based exclusively on numerics. 

The contact-type inequalities obtained below have positive margins on the regularized compact pieces considered, and hence persist under sufficiently small perturbations of the chosen rotating relative equilibrium, for the corresponding nearby energy windows. We do not pursue a uniform parameter-family theorem here.

The structure of the paper is as follows. In Section 2, we introduce some preliminaries on contact structures. In Section 3 we verify the contact condition for the energy hypersurfaces $\Sigma_{c}^{m_1} $ (and $\Sigma_{c}^{m_2} $ respectively). We first switch to shifted polar coordinates $ (\rho,\theta) $ centered on one of the masses and show that the Liouville vector field $ X=\rho\partial_{\rho} $ is transverse to the component of the energy hypersurface under investigation. This is achieved using a computer-assisted proof, in particular using validated numerics implemented in Python with the Arb ball-arithmetic backend exposed by \texttt{python-flint} \cite{pythonflint,JohanssonArb}. 

In Section 4, we examine the Kepler's problem on a sphere. We apply a Moser-type regularization \cite{Moser} to complete the dynamics by first projecting the problem onto the plane. Then, we apply a symplectic transformation to interchange the roles of position and momenta and compactify the problem to a sphere so that the collision orbits pass through North Pole (of course, this last sphere is not the ``physical sphere" we started from). In Section 5, we extend the results given in Section 4 to the connected component of the energy hypersurfaces $ \Sigma_{c}^{m_1} $ (and $ \Sigma_{c}^{m_2} $ respectively) and show that these hypersurfaces can be regularized to form compact manifolds $ \widetilde{\Sigma}_c^{m_1} $ (and $ \widetilde{\Sigma}_c^{m_2} $ respectively). Moreover, we show that the regularized hypersurfaces are both diffeomorphic to $ \mathbb{R}P^3 $ with its unique tight contact structure. In Section 6, we prove that if $ c $ is slightly above the first critical value, the energy hypersurface $ \Sigma_c $ is also of contact type and can be regularized to $\widetilde{\Sigma}$ which is diffeomorphic to $ \mathbb{R}P^3\# \mathbb{R}P^3 $, the connected sum (in the contact category) of two three-dimensional real projective spaces. The existence of periodic orbits will be deduced from all of this.

{\bf Acknowledgments}

The authors would like to thank Augustin Moreno, Natasha Balabanova and Aaron Brown for fruitful discussions. Furthermore, the authors would like to thank the referees for comments and suggestions which have helped improve the quality of the paper.  We want to thank also Siegfried Rump for creating the package IntLab that was used for validated numerics in MatLab in a previous version of the paper. 

{\bf Validated Numerics}

To estimate some quantities, in particular to prove transversality of a Liouville vector field with respect to a hypersurface, we use validated numerics.  The computer-assisted checks are implemented in Python using \texttt{python-flint}, which provides Arb ball arithmetic with rigorous outward rounding \cite{pythonflint,JohanssonArb}.  The verifier uses interval evaluation of the displayed formulae, interval automatic differentiation for the derivative quantities, and adaptive bisection of terminal boxes until the required interval inequality is certified.  The full middle annulus is certified directly on the Hill region, including all boundary strips.  The supplied execution log documents Arb-precision runs at 200 and 256 bits; every certificate passes in both runs. The quoted endpoint values are taken from the 256-bit run.
\footnote{The complete implementation is provided as supplementary
material in the ancillary file
\texttt{rigorous\_validated\_numerics.py}, which we recommend
invoking at 200 or 256 bits of Arb precision; the supplied
execution log records both runs and every certificate passes in
both. The endpoint values quoted in the body of the paper are
taken from the 256-bit run. Several certificates have small
absolute margins above their stated thresholds and are not
certifiable with standard 53-bit (IEEE-double) interval
arithmetic. In particular: the far-strip step in the proof of
Claim~A inside Theorem~\ref{thm: principal} certifies
$\widehat U(\rho,x)-\widehat U(\rho,1)>10^{-4}$ with worst lower
endpoint $\approx 10^{-4}+8\cdot 10^{-11}$; the middle-annulus
check $F_{\mathrm{mid}}>0.078$ has worst lower endpoint
$\approx 0.078\,000\,16$; the boundary check
$\widehat U_a(x)+1>0.037$ in Lemma~\ref{lambda} has worst lower
endpoint $\approx 0.037\,003\,07$ (above the threshold by
$\approx 3\cdot 10^{-6}$); and the inner-annulus check
$B_0>4\cdot 10^{-3}$ in Claim~C has worst lower endpoint
$\approx 0.004\,006\,9$. These widths are produced by adaptive
bisection against the fixed thresholds at the working precision;
dense scalar sampling shows that the underlying functions sit
substantially further above their thresholds than the certificate
widths suggest. Appendix~\ref{app:numerics} tabulates every
certificate with margins.}

\section{Preliminaries on Contact Structures}\label{section2}
In this short section, we summarize some well-known results about Contact Geometry and Contact Topology that will be used in the paper. In particular, we will recall how results in Contact/Symplectic Topology provide existence of periodic orbits of a Hamiltonian system. For more on this, see \cite{Ottobook} and the illuminating survey \cite{moreno2022contact}.

\begin{definition} Let $X$ be a manifold of dimension $2n-1$.
A contact form $\alpha$ on $X$ is a 1-form such that $\alpha\wedge (d\alpha)^{n-1}$
is a nowhere vanishing volume form on $X$. $(X, \alpha)$ is called a (co-oriented) contact manifold. 
\end{definition}

Often one is interested in a weaker notion, that of a {\em contact distribution} or contact structure on $X$: this is just a hyperplane distribution $\eta\subset TX$ which is maximally non-integrable (locally $\eta=\ker(\alpha)$ where $\alpha$ is a (local) 1-form and the condition $\alpha\wedge (d\alpha)^{n-1}\neq 0$ means that $\eta$ is maximally non-integrable).  Notice that if $\alpha$ is a contact form, then $d\alpha_{|\eta}$ is symplectic on $\eta$. For our purposes, we will always use the stronger notion of co-oriented contact manifold $(X, \alpha),$ since this is the notion of contact manifold related to dynamics. 

Indeed, to $(X, \alpha)$ there is an associated canonical vector field, called the Reeb vector field $R_{\alpha}$ which is defined via the two conditions $i_{R_{\alpha}}d\alpha=0$ and $i_{R_{\alpha}}\alpha=1$.
It turns out that the Reeb vector field is always transverse to the contact distribution $\eta:=\ker(\alpha)$ induced by $\alpha.$ This is because $d\alpha_{|\eta}$ is symplectic. 

In many cases, the Reeb vector field is just a (positive) time reparametrization of a Hamiltonian vector field. For this to occur, let $(M, \omega)$ be a symplectic manifold, let $H:M \rightarrow \mathbb{R}$ be a smooth Hamiltonian and let $X:=H^{-1}(e)$, where $e$ is a regular value of $H.$ Assume that there exists a tubular neighborhood $U$ of $X$ in $M$ such that $\omega_{|U}$ is exact, say equal to $d\lambda.$ Thus on $U$ it makes sense to look for a vector field $V$ such that $i_V\omega=\lambda.$ Any such a vector field is called a {\em Liouville} vector field. In particular, if $V$ is a Liouville vector field, it follows automatically that $L_V\omega=\omega$, where $L_V$ is the Lie derivative along $V$. Thus $V$ acts as a symplectic dilation on $U$.

Then the following holds:
\begin{theorem}\label{cont.th.1}
In the situation above, if $V$ is transverse to $X$, then $i_V\omega_{|X}=\lambda_{|X}$ is a contact form $\alpha$ on $X$. Furthermore, the Reeb vector field $R_{\alpha}$ associated to this contact form is a (positive) time reparametrization of the Hamiltonian vector field $X_H$.
\end{theorem}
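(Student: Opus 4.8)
The plan is to verify the two defining conditions of a contact form for $\alpha := i_V\omega_{|X}$ and then identify the Reeb flow with the reparametrized Hamiltonian flow. First I would observe that since $L_V\omega = \omega$ (which follows from $i_V\omega = \lambda$ and Cartan's formula, using that $\omega = d\lambda$ is closed), the flow $\phi^s_V$ of $V$ satisfies $(\phi^s_V)^*\omega = e^s\omega$ on $U$. Because $V$ is transverse to $X$, there is an $\varepsilon > 0$ such that the map $\Psi:(-\varepsilon,\varepsilon)\times X \to U$, $\Psi(s,x) = \phi^s_V(x)$, is a diffeomorphism onto a neighborhood of $X$; on this collar the function $s$ is a local defining function for $X$ and $V = \partial_s$. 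Pulling back, $\Psi^*\omega = \Psi^*(d\lambda)$, and one computes $\Psi^*\lambda = e^s(\alpha + \text{(terms in } ds)\cdot 0)$ — more precisely, writing $\lambda = i_V\omega$, the restriction of $\lambda$ to each slice $\{s\}\times X$ equals $e^s\alpha$ after pullback, while $i_{\partial_s}\Psi^*\lambda = \Psi^*(i_V\lambda) = \Psi^*(\omega(V,V)) = 0$. Hence $\Psi^*\lambda = e^s\,\pi^*\alpha$ where $\pi$ is the projection to $X$, and $\Psi^*\omega = d(e^s\pi^*\alpha) = e^s(ds\wedge\pi^*\alpha + \pi^*d\alpha)$.

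Next I would extract the contact condition from nondegeneracy of $\omega$. Since $\omega^n \neq 0$ on $U$ (as $\omega$ is symplectic) and $\Psi$ is a diffeomorphism, $(\Psi^*\omega)^n \neq 0$. Expanding $(e^s(ds\wedge\pi^*\alpha + \pi^*d\alpha))^n = e^{ns}\, n\, ds\wedge\pi^*\alpha\wedge\pi^*(d\alpha)^{n-1}$ (all higher powers of $ds\wedge\pi^*\alpha$ vanish), we conclude that $ds\wedge\pi^*(\alpha\wedge d\alpha^{n-1})$ is a nonvanishing $2n$-form on the collar, which forces $\alpha\wedge d\alpha^{n-1}$ to be a nonvanishing $(2n-1)$-form on $X$. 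Thus $\alpha$ is a contact form.

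For the statement about the Reeb field, I would work directly on $X$ with $\omega_{|X}$, $\lambda_{|X} = \alpha$. The Hamiltonian vector field $X_H$ is tangent to $X = H^{-1}(e)$ and satisfies $i_{X_H}\omega = -dH$ (or $+dH$ depending on sign convention), so $i_{X_H}\omega_{|X} = 0$ since $dH$ vanishes on $TX$; equivalently $i_{X_H}(d\alpha) = 0$ on $X$ because $\omega_{|X} = d\lambda_{|X} = d\alpha$. Hence $X_H$ lies in the kernel of $d\alpha_{|X}$, which is a line field since $\alpha$ is contact and also contains $R_\alpha$. Therefore $X_H = f\,R_\alpha$ for some function $f$ on $X$. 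Pairing with $\alpha$ gives $f = \alpha(X_H) = \lambda(X_H) = \omega(V, X_H)$. To see that $f$ is positive — so that the reparametrization preserves time orientation — I would note that $\omega(V, X_H) = -\omega(X_H, V) = dH(V) = L_V H$ up to sign, and since $V$ is transverse to $X$ pointing, say, toward increasing $H$, and $X_H$ generates a flow transverse to... — more carefully: near $X$, $V$ is transverse to the level sets so $dH(V) \neq 0$ and has constant sign; choosing the co-orientation of $\alpha$ compatibly (i.e. so that $V$ points in the direction of increasing energy or equivalently $ds > 0$ corresponds to $H > e$), one gets $f > 0$. Then $R_\alpha = f^{-1} X_H$ with $f^{-1} > 0$, which is the claimed positive time reparametrization.

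The main obstacle is bookkeeping the sign and co-orientation conventions so that "positive reparametrization" genuinely holds rather than just "reparametrization up to sign": this requires being careful about the orientation of the collar coordinate $s$ relative to the gradient of $H$, and about the sign convention in $i_{X_H}\omega = \pm dH$. The geometric content — that $X_H$ and $R_\alpha$ span the same characteristic line bundle $\ker(d\alpha_{|X}) = \ker(\omega_{|X})$ — is immediate; the only real work is the collar computation establishing that $i_V\omega$ restricts to a contact form, which is the standard argument that a hypersurface transverse to a Liouville field is of contact type.
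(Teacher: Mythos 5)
Your argument is correct and is essentially the standard proof that the paper defers to (section 2.5 of the cited book of Frauenfelder--van Koert): transversality of the Liouville field gives the contact condition, and the characteristic line field $\ker(\omega_{|TX})=\ker(d\alpha)$ identifies $R_\alpha$ with a nowhere-vanishing multiple $f=\lambda(X_H)=\pm dH(V)$ of $X_H$, positive after fixing the co-orientation. The only cosmetic remark is that the collar diffeomorphism $\Psi$ need not exist with uniform $\varepsilon$ when $X$ is non-compact, but this is harmless since the contact condition is pointwise --- one can bypass the collar entirely by noting $\alpha\wedge(d\alpha)^{n-1}=\tfrac{1}{n}\,\bigl(i_V(\omega^{n})\bigr)_{|TX}$, which is a volume form on $X$ precisely because $V$ is transverse and $\omega^{n}$ is a volume form on $U$.
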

For a proof one can check section 2.5 of the book \cite{Ottobook}. Part of the importance of this result stems from the fact that the Hamiltonian dynamics is described in a completely geometric manner. All the results about the dynamics of the Reeb vector field (like existence of periodic orbits) can be translated into results about Hamiltonian dynamics. A typical result in this direction is:

\begin{theorem}[Taubes, \cite{Taubes}]\label{thm: taubes}If $M$ is a closed oriented three-manifold with a contact form $\alpha$, then the associated Reeb vector field has a closed orbit.
\end{theorem}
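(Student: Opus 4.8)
The statement is the three-dimensional Weinstein conjecture, and the only route I know to it is gauge-theoretic, following Taubes; I will sketch that argument rather than attempt anything self-contained. Fix the closed oriented $3$-manifold $M$ with contact form $\alpha$, write $\xi=\ker\alpha$ and $R=R_\alpha$ for the Reeb field. Choose a complex structure $J$ on $\xi$ compatible with $d\alpha$ and the associated Riemannian metric $g$ for which $|\alpha|=1$, $R\perp\xi$, and $d\alpha$ restricts to the area form on $\xi$; with this normalization $\ast\alpha$ is a closed $2$-form and $d\alpha$, $\alpha\wedge d\alpha$ control all the relevant norms pointwise. The plan is to show that the \emph{absence} of a closed Reeb orbit is incompatible with the nonvanishing of Seiberg--Witten Floer homology. (For overtwisted $\xi$, or $M=\mathbb{S}^3$, one could instead argue via filling by pseudoholomorphic curves à la Hofer, but that does not cover the general case, nor the compactified hypersurfaces appearing later in this paper.)

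First I would set up the perturbed equations. The contact structure singles out a spin$^c$ structure $\mathfrak{s}_\xi$ with spinor bundle $\mathbb{S}=\underline{\mathbb{C}}\oplus\xi$, on which $\alpha$ itself acts as Clifford multiplication with eigenbundles $\underline{\mathbb{C}}$ and $\xi$. On $\mathbb{S}$ one considers, for a large real parameter $r>0$, the Seiberg--Witten equations for a pair $(A,\Psi)$ in which the curvature equation is perturbed by $r\,d\alpha$, schematically
\[
\ast F_A \;=\; r\,\big(\langle \mathrm{cl}(\cdot)\Psi,\Psi\rangle - i\alpha\big) \;+\; (\text{lower order}),\qquad D_A\Psi=0,
\]
together with a small further abstract perturbation to achieve transversality. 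The point of the $r d\alpha$ term is that, as $r\to\infty$, solutions are forced to localize near the locus where the $\underline{\mathbb{C}}$-component of $\Psi$ vanishes, and that locus is asymptotically tangent to $R$.

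Next I would invoke existence. By Kronheimer--Mrowka, the completed monopole Floer homology $\widehat{HM}(M)$ is nonzero for every closed oriented $3$-manifold; consequently the perturbed equations above admit a solution $(A_r,\Psi_r)$ for \emph{every} $r>0$ (a nonzero Floer class cannot be supported in the empty set). Fix $r_n\to\infty$ and solutions $(A_n,\Psi_n)$. Applying the Weitzenböck formula to $D_{A_n}\Psi_n=0$ yields the a priori bounds $|\Psi_n|\le 1+O(1/r_n)$ and a uniform bound on the energy $\mathcal{E}_n:=i\int_M \alpha\wedge F_{A_n}$, the latter tied to the level of the Floer class. Writing $\Psi_n=(\beta_n,\psi_n)$ with $\beta_n$ the function part, the rescaled zero divisors of $\beta_n$ define integral $1$-currents $C_n$ of mass $\le \mathcal{E}_n/2\pi$; the estimates force a subsequence to converge to a closed integral $1$-current $C$ invariant under the Reeb flow. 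A lower energy bound (again from nonvanishing of the Floer class, which obstructs $\mathcal{E}_n\to 0$) shows $C\neq 0$, and a nonzero finite-mass $1$-current invariant under a flow must be supported on, hence produce, a closed orbit of $R$ — the desired periodic Reeb orbit.

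The main obstacle is the analysis of the $r_n\to\infty$ limit: establishing the uniform spinor and curvature estimates, controlling the bubbling of the zero locus of $\beta_n$, and proving that the limit current is genuinely nontrivial and Reeb-invariant rather than degenerate. This concentration-compactness analysis is the technical core of Taubes' work and occupies the bulk of his papers; I would cite it as the excerpt does rather than reproduce it, and then read off the conclusion.
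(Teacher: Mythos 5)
The paper offers no proof of this statement: it is quoted verbatim as Taubes' resolution of the three-dimensional Weinstein conjecture and used as a black box, with the citation to \cite{Taubes} doing all the work. Your proposal is therefore not in competition with anything in the paper; it is an outline of the actual gauge-theoretic proof from the literature, and as such it is broadly faithful -- the canonical spin$^c$ structure $\mathfrak{s}_\xi$ with spinor bundle $\underline{\mathbb{C}}\oplus\xi$, the curvature equation perturbed by $r\,d\alpha$, the nonvanishing of monopole Floer homology supplying solutions for all $r$, the localization of the zero set of the $\underline{\mathbb{C}}$-component of the spinor along the Reeb direction, and the extraction of a Reeb-invariant integral $1$-current in the $r\to\infty$ limit are exactly the skeleton of Taubes' argument. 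Two places where your sketch elides real subtleties are worth flagging: the uniform energy bound $\mathcal{E}_n=O(1)$ is \emph{not} automatic from mere nonvanishing of a single Floer class (a priori the energy of the solutions representing a fixed class could grow with $r$); Taubes needs the Kronheimer--Mrowka theorem that $\widehat{HM}$ is infinitely generated in the relevant spin$^c$ structure together with a spectral-flow estimate to locate a sequence of solutions with bounded energy. Similarly, ruling out a degenerate (zero) limit current is done by comparing the grading of a would-be ``spinor nowhere vanishing'' solution against that of the Floer generators, not simply by observing that nonvanishing of the class obstructs $\mathcal{E}_n\to 0$. Since you explicitly defer the concentration--compactness core to Taubes' papers, as the present paper also does, your write-up is an acceptable and considerably more informative stand-in for the paper's bare citation.
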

That is exactly the Weinstein conjecture in dimension 3 (see \cite{Weinstein1979}). Long before Taubes proved the conjecture in dimension 3, Viterbo gave a proof for a special case in which the contact manifolds considered are compact energy hypersurfaces of contact type in $\mathbb{R}^{2n}$ equipped with the standard symplectic form (see \cite{viterbo}). What is meant by being of contact type is as follows:

\begin{definition}
    A hypersurface $\Sigma$ in a symplectic manifold $(M,\omega)$ is of contact type if there exists a global 1-form $\alpha$ which defines the contact structure on $\Sigma$ such that $d\alpha=\omega|_\Sigma$ and $\alpha(v)\neq 0  $ for all $ \ v \in L_\Sigma:=Ker(\omega|_\Sigma ) \subset T\Sigma$.
\end{definition}

The above definition is equivalent to the existence of a Liouville vector field which is transverse everywhere to $\Sigma$. For further detail one can refer to \cite{moreno2022contact}.

There are two types of contact structures.
\begin{definition}
     A contact structure is called overtwisted if there exists an embedded disk that is tangent to the contact plane distribution along its boundary. If there is no such disk then the structure is called tight. 
\end{definition}
One way to show tightness of a contact structure is through fillability. There are several notions for fillability but we will mainly use the following: 

\begin{definition}
    Let $(M,\xi)$ be a contact manifold (here $\xi$ denotes the contact distribution or contact structure, that is a maximally non-integrable hyperplane distribution). A compact, connected symplectic manifold \((W, \omega)\) with boundary \(\partial W = M\) is a weak filling of \((M, \xi)\) if \(\omega|_{\xi} > 0\). It is a strong filling if \(\xi = \ker(\iota_Y \omega)\) for some vector field \(Y\) defined near \(\partial W\), which points transversely outward at the boundary and satisfies \(L_Y \omega = \omega\). If \(Y\) extends globally over \(W\), then \(\iota_Y \omega\) defines a global primitive of \(\omega\), making \((W, \omega)\) an exact filling.
    
\end{definition}
There are numerous results regarding fillability; for more details, see \cite{wendl2010}.

In our paper, only 2-body collisions are dynamically possible. In general, due to the presence of 2-body collisions, energy hypersurfaces are not compact. In this case, Taubes' result cannot be directly applied even if the hypersurfaces involved are of contact type. However, the hypersurfaces can be compactified by using a Moser-type regularization which preserves the dynamics on the ambient manifold.

When regularized, the energy hypersurfaces for the planar restricted $3$-body problem at energies below a certain threshold \cite{AL} are diffeomorphic to $\mathbb{R}P^3$. Despite our Hamiltonian being more complicated, when the energy is low enough we deduce the same result. Moreover, we see that the regularized energy hypersurfaces carry a unique tight contact structure. This is achieved using the following result.
\subsection{Weinstein model for connected sum}
In this subsection a brief model for constructing connected sums of contact manifolds will be given.

Consider \((\mathbb{R}^{2n+2}, \omega_0 = d\tilde{x} \wedge d\tilde{y} + dz \wedge dw)\) with its standard symplectic structure, where \(\tilde{x}, \tilde{y} \in \mathbb{R}^n\) and \(z, w \in \mathbb{R}\). Notice that the Liouville vector field
\[
X = \frac{1}{2} (\tilde{x} \, \partial_{\tilde{x}} + \tilde{y} \, \partial_{\tilde{y}}) + 2z \, \partial_z - w \, \partial_w
\]
is transverse to the level sets of \(f = \tilde{x}^2 + \tilde{y}^2 + z^2 - w^2\) whenever \(f \neq 0\). This observation is essential in demonstrating that connected sums of contact manifolds carry a contact structure.

Let \((M_1, \xi_1)\) and \((M_2, \xi_2)\) be contact manifolds, and select Darboux balls \(D_i \subset M_i\). For the contact connected sum, note that we can embed \(D_1 \cup D_2\) in the two connected components of \(f^{-1}(-1)\). In fact, \(f^{-1}(-1)\) can be viewed as two separate Darboux balls embedded as contact hypersurfaces in \(\mathbb{R}^{2n+2}\). Next, we join these two Darboux balls by ``interpolating" \(f^{-1}(-1)\) to \(f^{-1}(1)\), forming the ``neck" of the connected sum. This interpolation is feasible because \(f^{-1}(-1)\) and \(f^{-1}(1)\) are near each other when \(w\) is large. The Liouville vector field \(X\) remains transverse to this neck, ensuring that the connected sum inherits a contact structure.

We are using this model on a tight contact manifold and hence basically we are doing surgery of index one which leads again to a tight contact structure. For more details see \cite{colin}.

\begin{theorem}[Eliashberg, \cite{Eliashberg}]\label{thm: Eliashberg}
    The closed 3-manifold $\mathbb{R}P^3$ carries a unique tight contact structure up to isotopy.
\end{theorem}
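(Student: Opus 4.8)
\emph{The plan is to follow Eliashberg's original strategy:} reduce the classification on $\mathbb{R}P^3$ to the (cited) classification of tight contact structures on its universal cover $S^3$. First, for \emph{existence}, I would realize $\mathbb{R}P^3=S^3/\{\pm 1\}$, with $S^3\subset\mathbb{C}^2$ the unit sphere carrying the standard contact form $\alpha_{std}=\big(\sum_{i=1}^2 x_i\,dy_i-y_i\,dx_i\big)|_{S^3}$. Since the antipodal map $z\mapsto -z$ is linear and preserves the quadratic form $\alpha_{std}$, it acts freely by strict contactomorphisms, so the quotient carries a contact form $\bar\alpha$ with kernel $\bar\xi$. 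I would check $\bar\xi$ is tight by lifting: an overtwisted disc in $(\mathbb{R}P^3,\bar\xi)$ is simply connected, hence lifts to one in $(S^3,\xi_{std})$, contradicting tightness of $\xi_{std}$. (Alternatively, $(\mathbb{R}P^3,\bar\xi)=\partial(DT^*S^2)$ is strongly fillable, and fillable implies tight.)

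For \emph{uniqueness}, given an arbitrary tight $\xi$ on $\mathbb{R}P^3$, I would pull it back along the double cover $\pi:S^3\to\mathbb{R}P^3$; the same lifting argument shows $\pi^*\xi$ is tight, so by Eliashberg's theorem on $S^3$ it is isotopic to $\xi_{std}$. Then I would push this down: using Gray stability I may assume the deck involution $\tau$ is a contactomorphism of $(S^3,\xi_{std})$, and since any free smooth involution of $S^3$ is conjugate through $\mathrm{Diff}_0(S^3)$ to the linear antipodal map — with the conjugating diffeomorphism taken $\xi_{std}$-compatible up to isotopy — the quotients $(\mathbb{R}P^3,\xi)$ and $(\mathbb{R}P^3,\bar\xi)$ become contactomorphic by a map isotopic to the identity, hence isotopic as contact structures by Gray stability.

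A fully self-contained alternative would avoid $S^3$ and use convex surface theory: take the genus-one Heegaard splitting $\mathbb{R}P^3=L(2,1)=V_1\cup_T V_2$ into two solid tori, make $T$ convex, normalize its dividing set, and apply Honda's classification of tight contact structures on solid tori together with the lens-space count $\big|\prod_i(a_i+1)\big|$, where $-2/1=[a_1,\dots,a_n]$ is the continued-fraction expansion with all $a_i\le -2$. For $L(2,1)$ this is $[-2]$, giving count $|-2+1|=1$, and Honda's bypass analysis also upgrades "unique up to contactomorphism" to "unique up to isotopy".

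The hard part, in either route, is ruling out exotic tight structures. In the first approach this is concentrated entirely in Eliashberg's $S^3$ theorem (filling the complement of a Darboux ball by a foliation of holomorphic discs, controlled via the Bennequin inequality), together with the equivariant bookkeeping for the deck involution $\tau$; in the second it is the Giroux flexibility / bypass dichotomy needed to put an arbitrary tight structure into normal form on each solid torus. Everything else — the explicit standard model, fillability, Gray stability, and the continued-fraction count — is routine.
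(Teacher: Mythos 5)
The paper does not actually prove this statement: it is quoted as a known theorem of Eliashberg and used as a black box (the only supporting remark is that the structure in question is filled by $T^*\mathbb{S}^2$). So there is no in-paper proof to compare against, and your proposal must be judged on its own merits. Your existence argument is correct, and your second route (convex surface theory, Honda's count $\bigl|{-2}+1\bigr|=1$ for $L(2,1)=[-2]$, with the bypass analysis upgrading contactomorphism to isotopy) is essentially the standard modern proof, granting the Giroux--Honda machinery.

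Your first route, however, has a genuine gap at the very first step of uniqueness: you assert that ``the same lifting argument shows $\pi^*\xi$ is tight.'' The lifting argument only goes one way. An overtwisted disc \emph{downstairs} is embedded and simply connected, hence lifts to an overtwisted disc \emph{upstairs}; this shows that tightness of the cover implies tightness of the base (which is what your existence argument correctly uses). It does not show that tightness of the base implies tightness of the cover, and that implication is false in general: there exist virtually overtwisted tight contact structures on lens spaces (already on $L(4,1)$) whose pullbacks to $S^3$ are overtwisted. The fact that $L(2,1)$ carries no virtually overtwisted tight structure is a \emph{consequence} of the classification you are trying to prove, not an input to it, so the step is circular as written. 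To repair route one you would need an independent argument ruling this out --- for instance, producing a filling of $(\mathbb{R}P^3,\xi)$ whose pullback fills $(S^3,\pi^*\xi)$, or falling back on the convex-surface analysis of route two. The subsequent equivariant bookkeeping (Livesay's theorem that free involutions of $S^3$ are conjugate to the antipodal map, made compatible with $\xi_{std}$) is also nontrivial, but at least the needed statements are true; the tightness-of-the-pullback claim is the step that actually fails.
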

This unique tight contact structure is, in fact, symplectically fillable; its filling is $D^*\mathbb{S}^2,$ the cotangent closed unit disk in $T^*\mathbb{S}^2$. The following  decomposition theorem will play a role in Section 6.
\begin{theorem}[Ding-Geiges, \cite{ding}] \label{thn: ding-geiges}Every non-trivial tight contact 3-manifold \((M, \xi)\) is contactomorphic
to a connected sum
\[
(M_1, \xi_1) \# \dots \# (M_k, \xi_k)
\]
of finitely many prime tight contact 3-manifolds. The summands are unique up to order and contactomorphism.

\end{theorem}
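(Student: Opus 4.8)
\noindent\textit{Proof strategy.} The plan is to reprove the Kneser--Milnor prime decomposition theorem inside the contact category, using Giroux's convex surface theory, Eliashberg's uniqueness results for tight contact $3$-balls and for $S^3$, and Colin's theorem that the contact connected sum is well defined and is inverse to cutting along a standard convex sphere. As in the topological case, the argument splits into an \textbf{existence} part and a \textbf{uniqueness} part.

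For existence, I would start from the topological prime decomposition $M \cong P_1 \# \cdots \# P_k$, realized by a disjoint family of embedded $2$-spheres $S_1,\dots,S_{k-1}\subset M$. By Giroux each $S_i$ can be $C^{\infty}$-isotoped to be convex, and since $(M,\xi)$ is tight a convex sphere automatically has a connected dividing set, i.e. it is a standard contact sphere. Cutting $M$ along the $S_i$ and capping off the resulting convex-sphere boundaries with standard tight $3$-balls (which exist and are unique by Eliashberg) produces closed contact manifolds $(P_i,\xi_i)$, and by Colin's theorem reassembling them by contact connected sum recovers $(M,\xi)$. Each $(P_i,\xi_i)$ is tight, because a contact connected sum is tight if and only if all of its summands are. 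Finally, whenever $P_i\cong S^3$ the structure $\xi_i$ is the unique tight contact structure on $S^3$, which is the unit for contact connected sum and may be discarded; the surviving summands with $P_i\neq S^3$ are contact-prime, since topological primeness of $P_i$ forces any splitting $(P_i,\xi_i)=(A,\alpha)\#(B,\beta)$ to have, say, $A\cong S^3$, and then $\alpha$, being a tight structure on $S^3$, is the standard one.

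For uniqueness, suppose $(M,\xi)$ is written as a contact connected sum of prime tight pieces in two ways; I would represent the two splittings by families $\mathcal{S}$ and $\mathcal{T}$ of disjoint standard convex spheres in $(M,\xi)$ and put $\mathcal{S}\cup\mathcal{T}$ in general position. The decisive input to isolate first is the lemma that an embedded $2$-sphere in a tight contact $3$-manifold which bounds a ball topologically in fact bounds a \emph{standard} tight $3$-ball: isotope it to be convex, observe that tightness forces a connected dividing set, and invoke Eliashberg's classification of tight structures on $B^3$ with fixed convex boundary. Granted this lemma, every elementary move of Milnor's uniqueness argument --- discarding an innermost trivial sphere of $\mathcal{T}$ contained in a complementary region of $\mathcal{S}$, and isotoping $\mathcal{T}$ across standard balls to reduce $\#(\mathcal{S}\cap\mathcal{T})$ --- can be carried out by a contact isotopy that preserves convexity and tightness. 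Iterating until $\mathcal{S}$ and $\mathcal{T}$ become contact-isotopic shows that the two families of summands coincide up to order and contactomorphism.

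The genuine difficulty lies entirely in the uniqueness part: one must run the Kneser--Haken--Milnor normal-surface bookkeeping while simultaneously keeping every sphere in the evolving families convex with connected dividing set, so that the structural inputs (a tight ball is standard, $(S^3,\xi_{\mathrm{std}})$ is a unit) remain applicable at each stage. Controlling the characteristic foliations and dividing curves under these isotopies, and handling the non-canonical choices made in putting surfaces in general position, is where the real work is; the existence part, by contrast, is essentially an assembly of the results of Giroux, Eliashberg and Colin.
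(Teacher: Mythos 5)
The paper does not prove this statement: it is quoted verbatim as an external result of Ding and Geiges (the cited \emph{Enseign.\ Math.} paper), so there is no internal proof to compare yours against. Judged on its own, your outline correctly identifies the ingredients that the actual Ding--Geiges argument uses --- Giroux convexity, the fact that tightness forces a convex sphere to have connected dividing set, Eliashberg's uniqueness of the tight ball and of the tight $S^3$, and Colin's results that the contact connected sum is well defined, inverse to cutting along a standard sphere, and preserves tightness --- and your existence argument is essentially the correct one. Two caveats. First, the uniqueness half is a plan rather than a proof, and you say so yourself: the assertion that ``every elementary move of Milnor's uniqueness argument can be carried out by a contact isotopy preserving convexity and tightness'' is exactly the content that needs proving, not a step one may invoke. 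Second, and more usefully, the published proof does not actually rerun the Kneser--Haken--Milnor normal-surface bookkeeping in the contact category; it leans on the \emph{topological} uniqueness theorem to match the summands as manifolds, and concentrates the contact-geometric content in your ``decisive lemma'' (a sphere bounding a ball in a tight manifold bounds a standard tight ball, so cutting and capping is independent of choices) together with the uniqueness of regluing along standard convex spheres. Reorganizing your uniqueness argument along those lines would spare you the hardest part of what you propose, namely controlling characteristic foliations through the entire general-position induction. As it stands, your text is a correct and well-informed strategy, but the uniqueness statement is not yet proved by it.
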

	\section{The Hamiltonian for the Problem}

	We start considering a circular relative equilibrium of the spherical 2-body problem (the two primaries), in which the bodies move along circles with fixed heights. For convenience, we normalize the masses of the primaries to $ m_1=m_2=1/2 $, as we consider the symmetric case, and we let $ \boldsymbol{q}_i=(x_i,y_i,z_i)  \in \mathbb{R}^3 $ such that $ |\boldsymbol{q}_i|=1 $ represent the positions of the primaries for $ i=1,2 $ on the unit sphere embedded in $\mathbb{R}^3$.  According to \cite{nbodyp1} the equations of the motion of the primaries are described by the following equations:
	
	\begin{equation}\label{eqmotion}
	\begin{split}
	\boldsymbol{\ddot{q}}_1&=-(\boldsymbol{\dot{q}}_1\cdot \boldsymbol{\dot{q}}_1)\boldsymbol{q}_1+\dfrac{\boldsymbol{q}_2-(\boldsymbol{q}_1\cdot \boldsymbol{q}_2)\boldsymbol{q}_1}{2[1-(\boldsymbol{q}_1\cdot\boldsymbol{q}_2)^2]^{3/2}},\\
	\boldsymbol{\ddot{q}}_2
 &=-(\boldsymbol{\dot{q}}_2\cdot \boldsymbol{\dot{q}}_2)\boldsymbol{q}_2+\dfrac{\boldsymbol{q}_1-(\boldsymbol{q}_1\cdot \boldsymbol{q}_2)\boldsymbol{q}_2}{2[1-(\boldsymbol{q}_1\cdot\boldsymbol{q}_2)^2]^{3/2}},
	\end{split}\end{equation}
	with the restrictions 
	\begin{equation}
	\boldsymbol{q}_i\cdot \boldsymbol{q}_i=1, \ \ \ \boldsymbol{q}_i\cdot \boldsymbol{\dot{q}}_i=0, \ \ \ i=1,2,
	\end{equation}
	and where $ -\cdot- $ is the standard scalar product in $ \mathbb{R}^3 $.
	
	A circular relative equilibrium \cite{Andrade} is a solution of \eqref{eqmotion} of the following form (see Figure \ref{fig:sphere} for an illustration)
	\begin{equation}
	\boldsymbol{q}_i=(r_i\cos(\omega t+a_i),r_i\sin(\omega t+a_i),z_i),
	\end{equation}
	where $ z_i \in (-1,1) $, $ a_i \in [0,2\pi] $, $\omega\neq 0$ and $ r_i=(-1)^i\sqrt{1-z_i^2} $.
	
	Since we are considering the two primaries moving on the same circle we fix $|r_1|=|r_2|=r>0$. Because of the relation between the angular momentum  and the radius given by $\omega^2=\Big(\dfrac{1}{4r^2-4r^4}\Big)^{3/2}$, fixing the angular momentum $\omega=1$, the radius of motion is fixed to be $r=1/\sqrt{2}$. We also fix the phases $ a_i=0$, $i=1,2$. For this configuration, there are only two critical points of the Hamiltonian function for the system (see Table 1 in \cite{Andrade}), one is located at north pole and the other at south pole of the sphere. The motion of the primaries takes place on the northern hemisphere (indeed $ z_i=1/\sqrt{2} $). We apply a stereographic projection from the south pole to the equatorial plane ($ xy $-plane) and then pass to a rotating coordinate system in which the position of the primaries are fixed at $ \big(\pm (\sqrt{2}-1),0\big) $. We denote the coordinates of the third mass in the rotating system by $ \vec{q}=(q_1,q_2) $.

 In this way we get an autonomous Hamiltonian $ H:T^\ast(\mathbb{R}^2\setminus\{(\pm(\sqrt{2}-1),0)\} \to \mathbb{R}$ which represents the total energy of the third mass (satellite) in the rotating coordinates where the primaries are fixed. This is given as:
	$$H=H_1+H_2+U_1+U_2$$
	where the different terms have the following expressions:
	\begin{equation}\label{H}
		\begin{split}
			H_1=&\dfrac{(q_1^2+q_2^2+1)^2(p_1^2+p_2^2)}{8},\\
			H_2=&q_2p_1-q_1p_2,\\
			U_1=&-\dfrac{\sqrt{2}q_1-\dfrac{\sqrt{2}(q_1^2+q_2^2-1)}{2}}{2\sqrt{(q_1^2+q_2^2+1)^2-\Big(\sqrt{2}q_1-\dfrac{\sqrt{2}(q_1^2+q_2^2-1)}{2}\Big)^2}},\\
			U_2=&-\dfrac{-\sqrt{2}q_1-\dfrac{\sqrt{2}(q_1^2+q_2^2-1)}{2}}{2\sqrt{(q_1^2+q_2^2+1)^2-\Big(-\sqrt{2}q_1-\dfrac{\sqrt{2}(q_1^2+q_2^2-1)}{2}\Big)^2}}.
		\end{split}
	\end{equation}
	
	We combine the two terms $H_1$ and $H_2$ to complete the square and produce an effective potential and we write the new Hamiltonian as
	$H=K+U_0+U_1+U_2,$
	where $K$ is the kinetic energy for the new Hamiltonian, $U_0$ is the additional term of the potential that comes from completing the square, $U_1$ is the potential generated by mass $m_2$ and $U_2$ is the potential generated by mass $m_1$. Explicitly:
	
	\begin{equation}\label{Kqp}
		K=\dfrac{\Big((q_1^2+q_2^2+1)p_1+\dfrac{4q_2}{q_1^2+q_2^2+1}\Big)^2}{8}+\dfrac{\Big((q_1^2+q_2^2+1)p_2-\dfrac{4q_1}{q_1^2+q_2^2+1}\Big)^2}{8},
	\end{equation}
	\begin{equation}\label{Uqp}
		\begin{split}
			U_0=&-\dfrac{2q_1^2+2q_2^2}{(q_1^2+q_2^2+1)^2},\\
			U_1=&-\dfrac{\sqrt{2}q_1-\dfrac{\sqrt{2}(q_1^2+q_2^2-1)}{2}}{2\sqrt{(q_1^2+q_2^2+1)^2-\Big(\sqrt{2}q_1-\dfrac{\sqrt{2}(q_1^2+q_2^2-1)}{2}\Big)^2}},\\
			U_2=&-\dfrac{-\sqrt{2}q_1-\dfrac{\sqrt{2}(q_1^2+q_2^2-1)}{2}}{2\sqrt{(q_1^2+q_2^2+1)^2-\Big(-\sqrt{2}q_1-\dfrac{\sqrt{2}(q_1^2+q_2^2-1)}{2}\Big)^2}}.
		\end{split}
	\end{equation}
	
	We start with a preliminary result:
	
	\begin{theorem}\label{Theorem_1}
		In polar coordinates centered at the first mass $(m_1),$ the Liouville vector field $ X=\rho\partial_{\rho} $ is transverse to the connected component of the energy hypersurface $ H^{-1}(c) $ containing the first mass $ m_1 $, $ \Sigma_{c}^{m_1} $, for sufficiently small energies. Thus the corresponding components of the energy hypersurface are of contact type.
	\end{theorem}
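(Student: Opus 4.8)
The plan is to verify the pointwise condition $dH(X)\neq 0$ everywhere on $\Sigma_c^{m_1}$: once this is known, transversality of the Liouville field $X$ to the hypersurface follows, and Theorem~\ref{cont.th.1} then immediately gives that $\Sigma_c^{m_1}$ is of contact type, with contact form $\alpha=i_X\omega|_{\Sigma_c^{m_1}}$ and Reeb flow a positive time reparametrization of $X_H$. To this end I would first pass to shifted coordinates $\tilde q=q-q_{m_1}$ together with polar coordinates $(\rho,\theta)$ on $\tilde q$, so that $X=\rho\,\partial_\rho$ is the dilation \emph{about $m_1$} (equivalently $\tilde q\cdot\partial_{\tilde q}$ in the shifted Cartesian chart); a direct check, or the standard fact that the configuration-space dilation is a Liouville vector field for the canonical form, shows $X$ is Liouville. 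Everything then reduces to evaluating $dH(X)=\rho\,\partial_\rho H$ along the level set.

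The key preliminary is the local form of the singular term. I would expand $U_2$ (the potential that blows up at $m_1$) near $\rho=0$ and show that the factor inside its square root vanishes to \emph{second} order at $m_1$, so that $U_2=-\psi(\tilde q)/\rho$ for a function $\psi$ that is smooth near $0$ with $\psi(0)>0$; that is, $U_2$ has a genuine Coulomb-type singularity which is radial to leading order about $m_1$. This is precisely why the polar coordinates are centred at $m_1$: dilation about any other point — the origin, say — would produce a sign-changing $O(1/\rho^2)$ term in $dH(X)$ and destroy transversality. The remaining pieces of $H$ — the kinetic form $K$ of \eqref{Kqp}, the Coriolis term, $U_0$, and the other potential $U_1$ of \eqref{Uqp} — are all smooth and bounded near $m_1$, with $K$ a positive-definite quadratic in the momenta whose coefficients are essentially constant there.

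Applying the dilation, the dominant contribution is $\rho\,\partial_\rho U_2=(\psi-\tilde q\cdot\nabla\psi)/\rho$, which tends to $\psi(0)>0$ times $1/\rho$ and hence is positive and of size $\sim 1/\rho$ on a fixed small punctured disk $\{0<\rho<\rho_0\}$ around $m_1$. For $c$ sufficiently negative (i.e. for sufficiently small energies) the Hill region of the $m_1$-component lies inside such a disk $\{\rho<\varepsilon(c)\}$ with $\varepsilon(c)\to0$ as $c\to-\infty$, so all of $\Sigma_c^{m_1}$ sits there. On that disk I would bound every remaining contribution to $dH(X)$ by a constant $M$ independent of $c$: the delicate one is $\rho\,\partial_\rho K$, which is of the form $|p|^2\cdot O(\rho)$, but on $\{H=c\}$ one has $|p|^2=O(1/\rho+|c|)$, and since $\rho|c|$ stays bounded on the Hill region this contribution is $O(1)$ uniformly; the Coriolis and $U_0,U_1$ terms are manifestly $O(1)$. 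Hence $dH(X)\ge\tfrac12\psi(0)/\rho-M\ge\tfrac12\psi(0)/\varepsilon(c)-M>0$ once $|c|$ is large enough, giving transversality and with it the contact-type statement.

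The main obstacle I anticipate is exactly the near-collision estimate just described: because $|p|$ blows up like $\rho^{-1/2}$ along the hypersurface as $\rho\to 0$, the term $\rho\,\partial_\rho K$ is a priori singular, and one must use the energy relation together with the smallness of $\varepsilon(c)$ to see that it stays bounded and cannot compete with the $1/\rho$ growth of $\rho\,\partial_\rho U_2$. Tied to this is establishing the normal form $U_2=-\psi(\tilde q)/\rho$ with $\psi(0)>0$ — which is what singles out the dilation about $m_1$ as the correct Liouville field — and checking, via the explicit formulas \eqref{Kqp}--\eqref{Uqp}, that the non-Keplerian remainder and all of its first derivatives are $O(1)$ uniformly on the shrinking disk, so that the situation is genuinely a small perturbation of the (rotating) Kepler problem centred at $m_1$.
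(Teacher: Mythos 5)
Your proposal is correct and follows essentially the same route as the paper: you identify the Coulomb-type singularity of $U_2$ at $m_1$ (so that $\rho\,\partial_\rho U\sim \tfrac{2-\sqrt{2}}{2}\rho^{-1}\to+\infty$, exactly the leading Laurent coefficient the paper computes), you bound the kinetic contribution $\rho\,\partial_\rho K$ along the level set via the energy relation, and you use that for $c\ll 0$ the Hill region of the $m_1$-component shrinks into a small disk about $m_1$ where the singular positive term dominates. The only cosmetic difference is in the kinetic estimate: the paper applies Cauchy--Schwarz together with the comparison lemma $g_1^2+g_2^2\le\alpha K+\beta$ and then substitutes $K=c-U$, whereas you bound $|p|^2=O(1/\rho+|c|)$ directly from the energy relation and use $\rho|c|=O(1)$ on the Hill region; both reduce to showing $\rho|p|^2=O(1)$ there.
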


	\begin{proof}
	For convenience, we pass to polar coordinates $(\rho, \theta)$ centered at one of the primaries, in our case it will be $\vec{q}_{m_1}$ located at $(-(\sqrt{2}-1),0)$. It is easy to check that $(\vec{q}-\vec{q}_{m_1})\nabla_q$ is a Liouville vector field and preserves the standard symplectic form and so does its transformation to polar coordinates given by $ X=\rho\partial_{\rho} $. Moreover, in polar coordinates, the potential $ U(\rho,\theta) $ becomes:
	$$U(\rho,\theta)=\mathcal{U}_0(\rho,\theta)+\mathcal{U}_1(\rho,\theta)+\mathcal{U}_2(\rho,\theta),$$ where $ \mathcal{U}_i(\rho,\theta) $'s are as below:
	
	Observe that $ q_1^2+q_2^2 $ in polar coordinates is given by $\Big(\rho\cos(\theta)-(\sqrt{2}-1)\Big)^2+\rho^2\sin^2(\theta):=A$. Then we have:
	
\begin{equation}\label{U_i}
\begin{split}
\mathcal{U}_0(\rho,\theta)=&-
\dfrac{2\rho^2\sin^2\theta+2\bigl(\rho\cos\theta-(\sqrt2-1)\bigr)^2}{(A+1)^2},\\
\mathcal{U}_1(\rho,\theta)=&-
\dfrac{\sqrt2\bigl(\rho\cos\theta-(\sqrt2-1)\bigr)-\dfrac{\sqrt2}{2}(A-1)}
{2\sqrt{(A+1)^2-\bigl(\sqrt2\bigl(\rho\cos\theta-(\sqrt2-1)\bigr)-\dfrac{\sqrt2}{2}(A-1)\bigr)^2}},\\
\mathcal{U}_2(\rho,\theta)=&-
\dfrac{-\sqrt2\bigl(\rho\cos\theta-(\sqrt2-1)\bigr)-\dfrac{\sqrt2}{2}(A-1)}
{2\sqrt{(A+1)^2-\bigl(-\sqrt2\bigl(\rho\cos\theta-(\sqrt2-1)\bigr)-\dfrac{\sqrt2}{2}(A-1)\bigr)^2}}.
\end{split}
\end{equation}

	It is immediate to check that $\lim_{\rho\rightarrow 0^{+}}U=-\infty$ and all the summands of $U$ except third are finite when $\rho\rightarrow 0^{+}$.
	
	Let $\Sigma_{c}^{m_1}$ denotes the connected component of the energy hypersurface $H^{-1}(c)$ containing $m_1$ in $T^*\mathbb{R}^2$. Call $\Lambda_c^{m_1}$ the corresponding Hill's region which is defined as the region of allowed motions, i.e,. the projection of the hypersurface $\Sigma_{c}^{m_1}$ onto the $q$-plane. Alternatively $$\Lambda_c^{m_1}:=\text{connected component of } \{q \in \mathbb{R}^2:U(q)\leq c\} \  \text{containing $ m_1 $}.$$
	We need to show that $X(H)_{\Sigma_{c}^{m_1}}\neq 0$ if $c<<0$. 
	Write $K=\frac{1}{8}(f_1^2+f_2^2).$ Then $X(K)=\frac{\rho}{4}(f_1g_1+f_2g_2)$ where $g_i=\partial_{\rho}f_i$. By the Cauchy-Schwartz inequality we have that $(f_1g_1+f_2g_2)^2\leq (f_1^2+f_2^2)(g_1^2+g_2^2)$. Therefore:
	\begin{equation}\label{|xk|}
	|X(K)|\leq \frac{\rho}{4}\sqrt{f_1^2+f_2^2}\sqrt{g_1^2+g_2^2}=\frac{\rho}{4}\sqrt{8K}\sqrt{g_1^2+g_2^2}.
	\end{equation}
	
	When evaluated in polar coordinate $ f_i$'s and $ g_i $'s are written explicitly as follows:
	\begin{equation}\label{fi}
		\begin{split}
			f_1(\rho,\theta)=&a_1(\rho,\theta)p_1+b_1(\rho,\theta),\ \ \ \ \ \ f_2(\rho,\theta)=a_2(\rho,\theta)p_2+b_2(\rho,\theta),\\
			g_1(\rho,\theta)=&c_1(\rho,\theta)p_1+d_1(\rho,\theta), \ \ \ \ \ \ g_2(\rho,\theta)=c_2(\rho,\theta)p_2+d_2(\rho,\theta),
		\end{split}
	\end{equation}
	
	where $ a_i, b_i,c_i$ and $ d_i $ for $ i=1,2 $ are as follows:
	\begin{equation}\label{a_i}
		\begin{split}
			a_1:=&\Big(\Big(\rho\cos(\theta)-(\sqrt{2}-1)\Big)^2+\rho^2\sin^2(\theta)+1\Big),\\
			a_2:=&\Big(\Big(\rho\cos(\theta)-(\sqrt{2}-1)\Big)^2+\rho^2\sin^2(\theta)+1\Big),\\
			b_1:=&\dfrac{4\rho\sin(\theta)}{\Big(\rho\cos(\theta)-(\sqrt{2}-1)\Big)^2+\rho^2\sin^2(\theta)+1},\\
			b_2:=-&\dfrac{4\Big(\rho\cos(\theta)-(\sqrt{2}-1)\Big)}{\Big(\rho\cos(\theta)-(\sqrt{2}-1)\Big)^2+\rho^2\sin^2(\theta)+1},\\
		c_1:=&\Big(2\Big(\rho\cos(\theta)-(\sqrt{2}-1)\Big)\cos(\theta)+2\rho\sin^2(\theta)\Big),\\
			c_2:=&\Big(2\Big(\rho\cos(\theta)-(\sqrt{2}-1)\Big)\cos(\theta)+2\rho\sin^2(\theta)\Big),\\ d_1:=&\dfrac{-4\sin(\theta)(\rho^2-4+2\sqrt{2})}{\Bigg(\Big(\rho\cos(\theta)-(\sqrt{2}-1)\Big)^2+\rho^2\sin^2(\theta)+1\Bigg)^2},\\
			d_2:=&\dfrac{4\cos(\theta)(\rho^2+2-2\sqrt{2})+8\rho(1-\sqrt{2})}{\Bigg(\Big(\rho\cos(\theta)-(\sqrt{2}-1)\Big)^2+\rho^2\sin^2(\theta)+1\Bigg)^2}
		\end{split}
	\end{equation}

\begin{lemma}\label{alpha_beta}
		For every finite number $\rho^\ast>0$ there exist positive constants
		$\alpha,\beta$ such that
		\[
			g_1^2+g_2^2\leq \alpha K+\beta
		\]
		for all $(\rho,\theta)\in[0,\rho^\ast]\times S^1$ and all
		$(p_1,p_2)\in\mathbb R^2$, where $K$, $g_1$, and $g_2$ are defined above.
	\end{lemma}
	\begin{proof}
		Put
		\[
			\mathcal S:=[0,\rho^\ast]\times S^1.
		\]
		For $i=1,2$ we have
		\[
			f_i=a_i(\rho,\theta)p_i+b_i(\rho,\theta),\qquad
			g_i=c_i(\rho,\theta)p_i+d_i(\rho,\theta).
		\]
		By the explicit formulae in \eqref{a_i},
		\[
			a_i(\rho,\theta)=
			\Big(\rho\cos\theta-(\sqrt2-1)\Big)^2+\rho^2\sin^2\theta+1\geq 1
		\]
		on $\mathcal S$. Hence $1/a_i$ is continuous and bounded on $\mathcal S$.
		All the coefficient functions $a_i,b_i,c_i,d_i$ are continuous on $\mathcal S$;
		therefore the functions
		\[
			\Gamma_i(\rho,\theta):=\frac{c_i(\rho,\theta)}{a_i(\rho,\theta)},
			\qquad
			\Delta_i(\rho,\theta):=d_i(\rho,\theta)-
			\frac{c_i(\rho,\theta)b_i(\rho,\theta)}{a_i(\rho,\theta)}
		\]
		are continuous on the compact set $\mathcal S$. Let
		\[
			C_i:=\max_{\mathcal S}|\Gamma_i|,
			\qquad
			D_i:=\max_{\mathcal S}|\Delta_i|.
		\]
		These numbers are finite.

		Solving the relation defining $f_i$ for $p_i$ gives
		\[
			p_i=\frac{f_i-b_i}{a_i}.
		\]
		Substituting this into $g_i$ yields the exact identity
		\[
			g_i=\frac{c_i}{a_i}f_i+
			\left(d_i-\frac{c_i b_i}{a_i}\right)
			=\Gamma_i f_i+\Delta_i.
		\]
		Using $(u+v)^2\leq 2u^2+2v^2$, we obtain, uniformly on $\mathcal S$ and for
		all $p_i\in\mathbb R$,
		\[
			g_i^2\leq 2C_i^2 f_i^2+2D_i^2.
		\]
		Choose, for example,
		\[
			\alpha:=16\max\{C_1^2,C_2^2,1\},
			\qquad
			\beta:=1+2D_1^2+2D_2^2.
		\]
		Then $\alpha>0$, $\beta>0$, and
		\[
			2C_i^2 f_i^2\leq \frac{\alpha}{8}f_i^2,
			\qquad i=1,2.
		\]
		Consequently
		\[
			g_1^2+g_2^2
			\leq \frac{\alpha}{8}(f_1^2+f_2^2)+\beta
			=\alpha K+\beta,
		\]
		because $K=\frac18(f_1^2+f_2^2)$. This proves the claim.
	\end{proof}

	By using Lemma \ref{alpha_beta}, inequality \eqref{|xk|} becomes:
	
	$$|X(K)|\leq \frac{\rho}{4}\sqrt{8K}\sqrt{\alpha K +\beta}.$$
	Therefore 
	\begin{equation}\label{X(H)}
		X(H)=X(K)+X(U)\geq X(U)-\frac{\rho}{4}\sqrt{8K}\sqrt{\alpha K +\beta}.
	\end{equation}
	Thus 
	\begin{equation}\label{imp.eq}X(H)_{\Sigma_{c}^{m_1}}\geq X(U)-\frac{\rho}{4}\sqrt{8K}\sqrt{\alpha K +\beta}|_{\Sigma_{c}^{m_1}}=\rho\partial_{\rho}U-\frac{\rho}{4}\sqrt{8(c-U)}\sqrt{\alpha (c-U) +\beta}|_{\Lambda_c^{m_1}}.\end{equation}

	\begin{lemma}\label{limrhou}
		Let $\mathcal{U}_2 $ be the last summand of $ U $ in polar coordinates given in \eqref{U_i}. The limit $\lim_{\rho\rightarrow 0^{+}}\rho \ \mathcal{U}_2$ is finite. Furthermore,  $$\lim_{\rho\rightarrow 0^{+}}\frac{\rho}{4}\sqrt{8(c-U)}\sqrt{\alpha (c-U) +\beta}$$ exists and is finite equal to 
        $$\dfrac{\sqrt{2}-1}{2}\sqrt{\alpha} .$$
	\end{lemma}
	\begin{proof} This is a straightforward computation.

	\end{proof}

	\begin{lemma}\label{limrhodu}
		We have	$$\lim_{\rho\rightarrow 0^{+}}\rho\partial_{\rho}U-\frac{\rho}{4}\sqrt{8(c-U)}\sqrt{\alpha (c-U) +\beta}=+\infty,$$ uniformly in $\theta$.
	\end{lemma}
	\begin{proof}
		By Lemma \ref{limrhou} $\lim_{\rho\rightarrow 0^{+}}\frac{\rho}{4}\sqrt{8(c-U)}\sqrt{\alpha (c-U) +\beta}$ exists and is constant. So our main focus will be $ \lim_{\rho\rightarrow 0^{+}}\rho\partial_{\rho}U $. As we did in the previous lemma we consider $ U $ as the sum of the three components $ U=\mathcal{U}_0+\mathcal{U}_1+\mathcal{U}_2 $ where $ \mathcal{U}_i $'s are as given in the equation \ref{U_i}. When we check the Laurent series centered at $\rho=0$ for $\rho\partial_{\rho}\mathcal{U}_0$, the first term with nonzero coefficient is $$\Big(\frac{(\sqrt{2}-1)^2\cos{\theta}}{(2-\sqrt{2})^3}\Big)\rho,$$ and for $\rho\partial_{\rho}\mathcal{U}_1$, we have $$\Big(\frac{(-3+2\sqrt{2})\cos{\theta}}{(2-\sqrt{2})^3}\Big)\rho,$$ and hence all the limits $ \rho \partial_{\rho}U_i $ are $ 0 $. Now we check $ \rho\partial_{\rho} \mathcal{U}_2 $. When we checked the Laurent series for $ \rho\partial_{\rho} \mathcal{U}_2 $ the first term with nonzero coefficient we get is $$\dfrac{2-\sqrt{2}}{2}\rho^{-1},$$ which tends to $ +\infty $ as $\rho\rightarrow 0^{+}$. Thus, the limit $$\lim_{\rho\rightarrow 0^{+}}\rho\partial_{\rho}U-\frac{\rho}{4}\sqrt{8(c-U)}\sqrt{\alpha (c-U) +\beta}=+\infty.$$
	\end{proof}

	{\bf Conclusion: }The proof of the Theorem \ref{Theorem_1} follows directly from the Lemmas \ref{alpha_beta}, \ref{limrhou}, \ref{limrhodu} and the inequality (\ref{X(H)}) as we have shown the existence of a Liouville vector field which is transverse everywhere to $ H^{-1}(c) $. That indicates that for all $c<<0$ such that $\Lambda_c^{m_1}\subset B(0, \rho^*)$ the energy hypersurfaces are of contact type. 
	\end{proof}
	
	The previous theorem is of asymptotic value, in the sense that it proves the existence of energy hypersurfaces of contact type only for very negative energies.
	
	Now we start analyzing the situation for values of energy below the energy of the first Lagrange point. The energy corresponding to first Lagrange point (located at the north pole of the sphere) is $-1 $.

\begin{lemma}\label{lambda}
	Let $a:=\sqrt{2}-1$, and let
	\[
	\mathbb D_a:=\{(\rho,\theta)\mid 0\leq \rho\leq a\}
	\]
	be the closed disk in the $q$-plane centered at $q_{m_1}$.  For every $c<-1$, the component $\Lambda_c^{m_1}$ of $\{U\leq c\}$ whose closure contains the collision point $q_{m_1}$ is contained in the open disk $\{0<\rho<a\}$.  In particular, $\Lambda_c^{m_1}$ is properly contained in the closed disk centered at $q_{m_1}$ with radius $\sqrt{2}-1$.
\end{lemma}

\begin{proof}
	Fix $c<-1$.  Since $U(\rho,\theta)\to -\infty$ as $\rho\to0^+$ uniformly in $\theta$, there exists $r_c\in(0,a)$ such that
	\[
	U(\rho,\theta)<c \qquad \text{for all }0<\rho<r_c \text{ and all }\theta.
	\]
	Thus the sublevel set $\{U\leq c\}$ has a distinguished component accumulating at the collision point $q_{m_1}$; this is the component denoted by $\Lambda_c^{m_1}$.
	
	It remains to prove that this component cannot meet the boundary circle $\partial\mathbb D_a=\{\rho=a\}$.  Put $x:=\cos\theta$ and denote
	\[
	\widehat U_a(x):=U(a,\theta), \qquad -1\leq x\leq 1.
	\]
	A direct simplification of \eqref{U_i} gives the following one-variable expression:
	\begin{equation}\label{Ua_boundary_formula}
	\begin{split}
	\widehat U_a(x)=&-
	\frac{4a^2(1-x)}{\bigl(1+2a^2(1-x)\bigr)^2}
	+
	\frac{a-2\sqrt2\,x}{2\sqrt{(a^2+4ax+4)(5-4x)}} \\
	&+
	\frac{\frac{a}{2}+(2-\sqrt2)x-2}{\sqrt{a^2-4\sqrt2\,a x+8}} .
	\end{split}
	\end{equation}
	The square-root arguments in \eqref{Ua_boundary_formula} are positive for all $x\in[-1,1]$.  Moreover,
	\[
	\widehat U_a(1)=0-\frac12-\frac12=-1.
	\]
		We now verify that no smaller value occurs on the rest of the boundary.  This is a one-dimensional interval computation, independent of the momentum estimates used later.  The Arb verifier adaptively bisects the interval and proves
		\[
		\widehat U_a(x)+1>0.037 \qquad\text{for all }x\in[-1,9/10],
		\]
		with worst certified lower endpoint greater than \(0.0370030681813\).  It also proves
		\[
		\frac{d}{dx}\widehat U_a(x)<-0.20 \qquad\text{for all }x\in[9/10,1],
		\]
		with worst certified upper endpoint less than \(-0.2175374251559\).
	Thus \(\widehat U_a\) is strictly decreasing on \([9/10,1]\), and since the endpoint is evaluated exactly as \(\widehat U_a(1)=-1\), we get
	\[
	\widehat U_a(x)\geq \widehat U_a(1)=-1 \qquad\text{for }x\in[9/10,1].
	\]
	Together with the interval lower bound on \([-1,9/10]\), this proves
	\[
	U(a,\theta)=\widehat U_a(\cos\theta)\geq -1 \qquad\text{for all }\theta.
	\]
	Since $c<-1$, it follows that $\{U\leq c\}\cap\partial\mathbb D_a=\emptyset$.
	
	Finally, $\Lambda_c^{m_1}$ meets the interior of $\mathbb D_a$ because it contains the punctured neighbourhood $0<\rho<r_c$ of the collision.  If the connected set $\Lambda_c^{m_1}$ also met the exterior of $\mathbb D_a$, then, because it does not meet $\partial\mathbb D_a$, it would be contained in the union of two disjoint open sets and would meet both of them, contradicting connectedness.  Therefore $\Lambda_c^{m_1}\subset\{0<\rho<a\}$, as claimed.
\end{proof}

	\begin{theorem}\label{thm: principal}
		The energy hypersurfaces $\Sigma_{c}^{m_1} $ (and hence $\Sigma_{c}^{m_2} $ as our problem is symmetric) are of contact type for all $c< H(L_1)=-1 $ where $ L_1 $ is the first Lagrange point.
	\end{theorem}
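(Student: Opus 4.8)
The plan is to promote the proof of Theorem~\ref{Theorem_1} to a bound uniform in the energy, using Lemma~\ref{lambda} to strip the $c$-dependence from the auxiliary constants, and then to reduce the whole statement to a single $c$-free inequality. By Lemma~\ref{lambda}, for \emph{every} $c<-1$ the Hill region $\Lambda_c^{m_1}$ is contained in the \emph{open} disk $\mathbb{D}^{\circ}$ of radius $\sqrt2-1$ about $q_{m_1}$ (its proof in fact gives $\Lambda_c^{m_1}\cap\partial\mathbb{D}=\emptyset$). So I fix once and for all $\rho^{*}:=\sqrt2-1$ and apply Lemma~\ref{alpha_beta} with this one value, obtaining constants $\alpha,\beta>0$ \emph{independent of $c$} with $g_1^2+g_2^2\le\alpha K+\beta$ on $[0,\sqrt2-1]\times S^1\times\mathbb{R}^2$. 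Re-running the chain of estimates \eqref{|xk|}--\eqref{imp.eq} verbatim with these constants then gives, simultaneously for all $c<-1$,
\[
X(H)|_{\Sigma_c^{m_1}}\ \ge\ \rho\,\partial_{\rho}U-\tfrac{\rho}{4}\sqrt{8(c-U)}\,\sqrt{\alpha(c-U)+\beta}\qquad\text{on }\Lambda_c^{m_1}.
\]
On $\Lambda_c^{m_1}$ one has $U\le c<-1<0$, hence $0\le c-U<-1-U$ with $-1-U>0$, and since $t\mapsto\sqrt{8t}\sqrt{\alpha t+\beta}$ is strictly increasing on $[0,\infty)$, the theorem reduces to the $c$-free inequality
\[
(\star)\qquad \partial_{\rho}U(\rho,\theta)\ \ge\ \tfrac{1}{4}\sqrt{8\,(-1-U(\rho,\theta))}\,\sqrt{\alpha\,(-1-U(\rho,\theta))+\beta}\qquad\text{on }\ \{U<-1\}\cap\mathbb{D}^{\circ}:
\]
indeed $(\star)$ together with the strict bound $c-U<-1-U$ makes the right-hand side of the displayed estimate strictly positive at every point of $\Sigma_c^{m_1}$, so $X=\rho\,\partial_{\rho}$ is transverse there; Theorem~\ref{cont.th.1} then produces a contact form, and the symmetry of the configuration transfers everything to $\Sigma_c^{m_2}$.

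To prove $(\star)$ I would split into two regimes. Near the collision, $\rho\to0^{+}$, the singular term dominates: $\partial_{\rho}U\sim\frac{2-\sqrt2}{2}\,\rho^{-2}\to+\infty$, while $-1-U\sim-U\sim\frac{2-\sqrt2}{2}\,\rho^{-1}$ makes the right-hand side blow up only like $\rho^{-1}$ --- this is exactly Lemmas~\ref{limrhou}--\ref{limrhodu} read with $-1-U$ in place of $c-U$ --- so $(\star)$ holds on $\{0<\rho\le\delta_0\}$ for some $\delta_0>0$. On the compact remainder $\{\delta_0\le\rho\le\sqrt2-1\}\cap\{U\le-1\}$ one checks from the explicit formulas \eqref{U_i} (isolating the singular summand $\mathcal U_2$ from the smooth part as in Lemma~\ref{limrhodu}, together with the behaviour of the $U_\rho$, cf.\ Figure~\ref{fig:Ud}) that $\partial_{\rho}U>0$ off $q_{m_1}$ and off $L_1$, i.e.\ the critical Hill region is radially star-shaped about $m_1$; away from a fixed neighbourhood of $L_1=(\sqrt2-1,0)$ this bounds the left side of $(\star)$ below by a positive constant while the right side stays bounded, and the inequality is immediate.

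\textbf{The main obstacle is the neighbourhood of $L_1$}, where both sides of $(\star)$ vanish and one must compare their orders of vanishing. There $U$ is smooth with $\nabla U(L_1)=0$ (a Lagrange point is a critical point of the effective potential, since $K=0$ and $\nabla_q K=0$ there), and by the $q_2\mapsto-q_2$ symmetry its Hessian at $L_1$ is $\mathrm{diag}(a,b)$ with $a=\partial_{q_1}^2U(L_1)<0<b=\partial_{q_2}^2U(L_1)$, a nondegenerate saddle. On the $m_1$-side of $\{U<-1\}$ one then has $\partial_{\rho}U=|a|\,|q_1|+\tfrac{b}{\sqrt2-1}q_2^{2}+o(|q|)\ge|a|\,|q_1|$ and $-1-U=\tfrac12(|a|q_1^{2}-bq_2^{2})+o(|q|^2)\le\tfrac12|a|q_1^{2}$, so, to leading order, $(\star)$ near $L_1$ is equivalent to the numerical inequality $|a|\ge\beta/4$, i.e.\ $|\partial_{q_1}^2U(L_1)|\ge\tfrac14\beta$. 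Establishing this is the crux of the whole argument: one computes $\partial_{q_1}^2U(L_1)$ explicitly from \eqref{H}--\eqref{Uqp} and checks that it exceeds the (explicit, if cumbersome) bound $\beta$ coming out of Lemma~\ref{alpha_beta} for $\rho^{*}=\sqrt2-1$; should the $\beta$ produced by the crude estimate in that lemma be a priori too large, one re-optimises the choice of $\alpha_1$ (and hence of $M$ and $\beta_1$) localised near $L_1$ so as to bring $\beta$ below this threshold. With $(\star)$ secured, the theorem follows from the reduction in the first paragraph, and the existence of periodic orbits on the regularized hypersurfaces is then obtained via Taubes' theorem as indicated in the introduction.
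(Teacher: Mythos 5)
Your overall skeleton is the paper's own: the same Liouville field $X=\rho\partial_\rho$, the same $\alpha,\beta$ from Lemma~\ref{alpha_beta} taken on the disk of radius $\sqrt2-1$ supplied by Lemma~\ref{lambda}, the same reduction of all energies $c<-1$ to the critical case via the monotonicity of $t\mapsto\sqrt{8t}\sqrt{\alpha t+\beta}$ (this is literally the paper's ``second part''), and the same near-collision asymptotics as Lemmas~\ref{limrhou}--\ref{limrhodu}. The divergence, and the gap, is in how you verify the key inequality $(\star)$ on the rest of the Hill region, which is exactly where the paper has to do its real work (by numerical/graphical estimates). On your ``compact remainder'' you argue that $\partial_\rho U$ is bounded below by a positive constant while the right-hand side of $(\star)$ ``stays bounded'', and conclude the inequality is ``immediate''. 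That is a non sequitur: a positive lower bound on one side and an upper bound on the other do not compare the two. At intermediate $\rho$ in the interior of $\{U<-1\}$ the right-hand side is genuinely positive and of size roughly $\sqrt{\beta}\sqrt{-1-U}$ with $\beta\approx 80$, so whether $\partial_\rho U$ dominates it is a quantitative question that your sketch leaves entirely open; the paper settles it only by reducing to $\theta=0$ and inspecting explicit plots.

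The second, sharper problem is at $L_1$. Your Taylor-expansion analysis there is a genuinely more analytic take than the paper's (which just reads positivity off a graph), and it correctly isolates the crux as the numerical criterion $|\partial_{q_1}^2U(L_1)|\ge\beta/4$. But you neither verify this nor can you with the constants your own construction produces: from \eqref{eqQ} the quadratic part of $U$ at $L_1$ is $-10q_1^2+6q_2^2$, so $|\partial_{q_1}^2U(L_1)|=20$ and the criterion reads $\beta\le 80$, whereas running Lemma~\ref{alpha_beta} on the full disk yields, as in the paper, $\beta=\beta_1+\beta_2\approx 80.23$. So at leading order $(\star)$ fails (marginally) along the axis near $L_1$, and the proposed escape --- ``re-optimise $\alpha_1$, $M$, $\beta_1$ localised near $L_1$'' --- is precisely the unperformed computation on which the whole argument now hinges (and it is not cosmetic: once $\alpha,\beta$ vary with the region, the monotonicity reduction and the bulk estimate must both be redone with the new constants). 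Until that is carried out, the proposal does not establish the theorem; it does, however, usefully expose how razor-thin the margin in the numerical verification near $L_1$ is.
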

	\begin{proof}
 The proof is divided into two parts and it is based on analytical and numerical estimates. In the first part, we fix $c=H(L_1)$ and consider the open disk $\tilde{\mathbb{D}}$ centered at $q_{m_1}$ with radius $\rho=\sqrt{2}-1$. We prove that $X(H)|_{\Sigma^{m_1}_c\cap \pi^{-1}(\tilde{ \mathbb{D}})}>0,$ where $\pi:T^*\mathbb{R}^2 \rightarrow \mathbb{R}^2$ and $X=\rho\partial_{\rho}$. 

 This is done in three steps: 
 \begin{enumerate}
     \item $X(H)>0$ on $\Sigma^{m_1}_c\cap \pi^{-1}([0.39, \sqrt{2}-1)\times \mathbb{S}^1)$ using analytic and validated numerical estimates. 
     \item $X(H)>0$ on $\Sigma^{m_1}_c\cap \pi^{-1}((0, 0.08]\times \mathbb{S}^1)$ using only analytical techniques.
     \item $X(H)>0$ on $\Sigma^{m_1}_c\cap \pi^{-1}([0.08, 0.39]\times \mathbb{S}^1)$ using only validated numerics. To see why it is not possible to use analytical techniques in this case, see below for a detailed explanation. 
 \end{enumerate}

 \noindent In the second part, using a suitable system of inequalities, Lemma \ref{lambda} and the first part we prove the claim of the Theorem.
 
 {\bf First part:}
 
		Recall that \(K= \frac{1}{8}(f_1^2+f_2^2)\) and \(X(K)=\frac{\rho}{4}(f_1g_1+f_2g_2)\), where \(f_i=a_ip_i+b_i, g_i=c_ip_i+d_i\) as given in Equation \eqref{fi}. We first record explicit constants \(\alpha,\beta\) such that
\[
        g_1^2+g_2^2\leq \alpha K+\beta
\]
on the whole closed disk \(0\leq \rho\leq \sqrt2-1\), uniformly in \(\theta\) and in the momenta.

Set
\[
        a:=\sqrt2-1,\qquad x:=\cos\theta,
\]
and write
\[
        A:=\rho^2-2a\rho x+a^2,
        \qquad h:=A+1,
        \qquad \gamma:=2(\rho-a x).
\]
Then, in the notation of \eqref{fi},
\[
        a_1=a_2=h,\qquad c_1=c_2=\gamma,
\]
so that, for \(i=1,2\),
\[
        f_i=h p_i+b_i,\qquad g_i=\gamma p_i+d_i.
\]
We choose
\[
        \alpha:=384-256\sqrt2=128(\sqrt2-1)^2,
        \qquad \lambda:=\frac{\alpha}{8}=16(\sqrt2-1)^2.
\]

\begin{remark}[Choice of $\alpha$]
The value
$\lambda=16a^2$, equivalently
$\alpha=8\lambda=128(\sqrt2-1)^2=384-256\sqrt2$, is a convenient sufficient
choice.  With this choice the quadratic $g_i^2-\lambda f_i^2$ is concave in
$p_i$ whenever $D=\lambda h^2-\gamma^2$ is positive.  The positivity of $D$ on
the whole rectangle is verified in the next paragraph; no sharpness property of
this value of $\lambda$ is needed.
\end{remark}
For \(0\leq \rho\leq a\) and \(-1\leq x\leq 1\), put
\[
        D:=\lambda h^2-\gamma^2.
\]
This denominator is strictly positive. Indeed, \(|\gamma|\leq 4a\) and \(h\geq 1\). If \(h>1\), then
\[
        D\geq \lambda(h^2-1)>0.
\]
If \(h=1\), then \(A=0\), hence \(\rho=a\), \(x=1\), and \(\gamma=0\), so \(D=\lambda>0\).

For fixed \((\rho,\theta)\), the expression
\[
        g_i^2-\lambda f_i^2
\]
is a concave quadratic polynomial in \(p_i\). Completing the square gives the exact maximum over \(p_i\in\mathbb R\):
\[
        \sup_{p_i\in\mathbb R}\left(g_i^2-\lambda f_i^2\right)
        =B_i(\rho,x)
        :=\frac{\lambda\left(hd_i-\gamma b_i\right)^2}{\lambda h^2-\gamma^2}.
\]
The numerators simplify to
\[
        hd_1-\gamma b_1
        =-\frac{4\sin\theta}{h}
        \left(\rho^2-4+2\sqrt2+\gamma\rho\right),
\]
and
\[
        hd_2-\gamma b_2
        =\frac{4}{h}
        \left[x(\rho^2+2-2\sqrt2)+2\rho(1-\sqrt2)+\gamma(\rho x-a)\right].
\]
Thus, using \(\sin^2\theta=1-x^2\), it remains to bound the following two functions on the rectangle \([0,a]\times[-1,1]\):
\[
        B_1(\rho,x)=
        \frac{16\lambda(1-x^2)\left(\rho^2-4+2\sqrt2+\gamma\rho\right)^2}{h^2D},
\]
\[
        B_2(\rho,x)=
        \frac{16\lambda\left[x(\rho^2+2-2\sqrt2)+2\rho(1-\sqrt2)+\gamma(\rho x-a)\right]^2}{h^2D}.
\]

The two functions \(B_1\) and \(B_2\) are bounded by a direct Arb interval computation on the whole rectangle \([0,a]\times[-1,1]\).  The verifier first checks positivity of the denominator \(D\) on every terminal box, then evaluates the displayed formulae with outward-rounded ball arithmetic and adaptive bisection.  This gives the continuous-domain bounds
\[
B_1(\rho,x)<11.67,\qquad B_2(\rho,x)<16.10
\]
for all \((\rho,x)\in[0,a]\times[-1,1]\).  In the 256-bit Arb run supplied with the paper the worst certified upper endpoints were approximately \(11.6699408441782\) for \(B_1\) and \(16.0991435050964\) for \(B_2\).  Notice also that \(D\) is strictly positive on the rectangle; in particular at \((\rho,x)=(a,1)\) one has \(D=\lambda>0\).  A self-contained implementation is provided in the ancillary file \texttt{rigorous\_validated\_numerics.py}.

Consequently,
\[
        g_1^2\leq \frac{\alpha}{8}f_1^2+11.67,
        \qquad
        g_2^2\leq \frac{\alpha}{8}f_2^2+16.10.
\]
Taking
\[
        \beta_1:=11.67,
        \qquad
        \beta_2:=16.10,
        \qquad
        \beta:=\beta_1+\beta_2=27.77,
\]
we obtain
\begin{equation}\label{eq:explicit-alpha-beta}
        g_1^2+g_2^2\leq \alpha K+\beta,
        \qquad
        \alpha=384-256\sqrt2,
        \qquad
        \beta=27.77.
\end{equation}
Combining this with \eqref{|xk|} gives
\begin{equation}\label{eq:explicit-kinetic-bound}
        |X(K)| \leq \frac{\rho}{4}\sqrt{8K}\sqrt{\alpha K + \beta}.
\end{equation}
This argument avoids any enclosure of the roots of the momentum quadratics; in particular, it does not rely on choosing or labeling roots as functions of \((\rho,\theta)\).

		Now we will estimate the RHS of the following inequality by using these constants $ \alpha $ and $ \beta $ and the key inequality \eqref{imp.eq}:
		\begin{equation}\label{Xhsigma}
	\begin{split}	X(H)_{\Sigma^{m_1}_c\cap \pi^{-1}(\tilde{ \mathbb{D}})}&\geq X(U)-\frac{\rho}{4}\sqrt{8K}\sqrt{\alpha K +\beta}|_{\Sigma^{m_1}_c\cap \pi^{-1}(\tilde{ \mathbb{D}})}=\\
&=\rho\partial_{\rho}U-\frac{\rho}{4}\sqrt{8(c-U)}\sqrt{\alpha (c-U) +\beta}|_{\pi(\Sigma^{m_1}_c)\cap\tilde{\mathbb{D}}}.
 \end{split}
		\end{equation}
		
		\medskip
		
		\noindent\textbf{Analysis on $[\bar{\rho},\sqrt{2}-1]$: reduction to $\theta=0$.}

We analyse the right-hand side of \eqref{Xhsigma} on the closed annulus
$[\bar\rho,\rho_0]\times\mathbb S^1$, where
\[
        \bar\rho:=0.39,\qquad \rho_0:=\sqrt2-1.
\]
The point $(\rho_0,0)$ is the projection of $L_1$, and the open disk
$\widetilde{\mathbb D}$ contains only $\rho<\rho_0$. Hence, for the critical
value $c=-1$, it is enough to prove the required inequality on
$[\bar\rho,\rho_0)\times\mathbb S^1$.

For this reduction we use the variable
\[
        x:=\cos\theta\in[-1,1],
        \qquad a:=\sqrt2-1,
\]
and write $\widehat U(\rho,x):=U(\rho,\theta)$. Since $U$ depends on $\theta$
only through $\cos\theta$, it is even in $\theta$, and it suffices to work with
$x\in[-1,1]$. On the annulus under consideration the following equivalent closed
form is used for the interval computations:
\begin{equation}\label{eq:Uhat-reduction}
\begin{aligned}
A(\rho,x)&:=\rho^2-2a\rho x+a^2,\\
\widehat U_0(\rho,x)&:=-\frac{2A(\rho,x)}{(A(\rho,x)+1)^2},\\
\widehat U_1(\rho,x)&:=
\frac{\rho(\rho-2\sqrt2\,x)}
{2\sqrt{(\rho^2+4\rho x+4)(\rho^2-4a\rho x+4a^2)}},\\
\widehat U_2(\rho,x)&:=
\frac{\frac12\rho^2+(2-\sqrt2)\rho x-2a}
{\rho\sqrt{\rho^2-4\sqrt2\rho x+8}},\\
\widehat U(\rho,x)&:=\widehat U_0(\rho,x)+\widehat U_1(\rho,x)+\widehat U_2(\rho,x).
\end{aligned}
\end{equation}
These formulae are obtained from \eqref{U_i} by substituting $x=\cos\theta$ and
using $\sin^2\theta=1-x^2$; the square-root arguments are strictly positive on
$[\bar\rho,\rho_0]\times[-1,1]$.

The reduction to the slice $\theta=0$ rests on the following three claims.

\begin{proposition}[Claim (A)]\label{prop:A}
For every $\rho\in[\bar\rho,\rho_0]$ and every $\theta\in[-\pi,\pi]$,
\[
        U(\rho,\theta)\geq U(\rho,0).
\]
Equivalently,
\[
        \widehat U(\rho,x)\geq \widehat U(\rho,1)
        \quad\text{for all }(\rho,x)\in[\bar\rho,\rho_0]\times[-1,1].
\]
\end{proposition}

\begin{proposition}[Claim (B)]\label{prop:B}
For every $\rho\in[\bar\rho,\rho_0]$ and every $\theta\in[-\pi,\pi]$,
\[
        \partial_\rho U(\rho,\theta)\geq \partial_\rho U(\rho,0).
\]
Equivalently,
\[
        \partial_\rho\widehat U(\rho,x)
        \geq \partial_\rho\widehat U(\rho,1)
        \quad\text{for all }(\rho,x)\in[\bar\rho,\rho_0]\times[-1,1].
\]
\end{proposition}

\begin{proposition}[Claim (C)]\label{prop:C}
Let $c=-1$, $\alpha=384-256\sqrt2$, and $\beta=27.77$. For every
$\rho\in[\bar\rho,\rho_0)$,
\[
\partial_\rho U(\rho,0)
-
\frac14\sqrt{8(c-U(\rho,0))}\sqrt{\alpha(c-U(\rho,0))+\beta}>0,
\]
whenever the left-hand side makes sense (that is whenever we are inside the Hill's region). 
\end{proposition}

Assuming these three propositions, let $T(\rho,\theta):=c-U(\rho,\theta)$. On
$\pi(\Sigma^{m_1}_c)$ one has $T\geq0$. The function
\[
        T\longmapsto \sqrt{8T}\sqrt{\alpha T+\beta}
\]
is increasing for $T\geq0$. Thus Claim~(A) gives
$T(\rho,\theta)\leq T(\rho,0)$, Claim~(B) gives
$\partial_\rho U(\rho,\theta)\geq\partial_\rho U(\rho,0)$, and Claim~(C) gives
\[
\partial_\rho U(\rho,\theta)
-
\frac14\sqrt{8(c-U(\rho,\theta))}\sqrt{\alpha(c-U(\rho,\theta))+\beta}>0
\]
for $\rho\in[\bar\rho,\rho_0)$. Combining this with \eqref{Xhsigma} yields
\[
X(H)>0
\quad\text{on}\quad
\Sigma^{m_1}_c\cap\pi^{-1}\bigl([\bar\rho,\rho_0)\times\mathbb S^1\bigr),
\qquad c=-1.
\]
This covers the whole part of the open disk $\widetilde{\mathbb D}$ with
$\rho\geq\bar\rho$; the single boundary point $(\rho_0,0)=\pi(L_1)$ is not in
$\widetilde{\mathbb D}$.

It remains to prove Claims~(A), (B), and (C). The numerical parts below are
finite interval-arithmetic checks with outward rounding. The independent
verifier used for the checks is the ancillary file
\texttt{rigorous\_validated\_numerics.py}. The verifier uses the explicit formula
\eqref{eq:Uhat-reduction}, interval enclosures for all square roots, interval
jet arithmetic for derivative quantities, and adaptive bisection of terminal
boxes.

\paragraph{Proof of Proposition~\ref{prop:A}.}
Put
\[
        x_0:=\cos(0.25).
\]
On the near strip $[\bar\rho,\rho_0]\times[x_0,1]$, interval arithmetic gives
\begin{equation}\label{eq:A-local-certificate}
        \partial_x\widehat U(\rho,x)<-0.0579.
\end{equation}
Therefore, for $x\in[x_0,1)$,
\[
\widehat U(\rho,x)-\widehat U(\rho,1)
= -\int_x^1\partial_s\widehat U(\rho,s)\,ds>0.
\]
On the complementary strip $[\bar\rho,\rho_0]\times[-1,x_0]$, the verifier checks
directly that
\begin{equation}\label{eq:A-global-certificate}
        \widehat U(\rho,x)-\widehat U(\rho,1)>10^{-4}.
\end{equation}
For \eqref{eq:A-global-certificate} the verifier uses direct Arb interval evaluation on adaptively bisected boxes.  A box is accepted only when the lower endpoint of the interval enclosure of $F(\rho,x)=\widehat U(\rho,x)-\widehat U(\rho,1)$ is greater than $10^{-4}$.  The 256-bit Arb run gives worst certified lower endpoint greater than $0.00010000007993$ and terminates with $16108$ accepted boxes and $16107$ bisections. Equations
\eqref{eq:A-local-certificate} and \eqref{eq:A-global-certificate} prove
Claim~(A). \hfill$\square$

\paragraph{Proof of Proposition~\ref{prop:B}.}
The same decomposition proves the corresponding inequality for
$\partial_\rho\widehat U$. On
$[\bar\rho,\rho_0]\times[x_0,1]$, interval arithmetic gives
\begin{equation}\label{eq:B-local-certificate}
        \partial_{\rho x}\widehat U(\rho,x)<-3.70.
\end{equation}
Hence, for $x\in[x_0,1)$,
\[
\partial_\rho\widehat U(\rho,x)-\partial_\rho\widehat U(\rho,1)
= -\int_x^1\partial_{\rho s}\widehat U(\rho,s)\,ds>0.
\]
On $[\bar\rho,\rho_0]\times[-1,x_0]$, the verifier checks
\begin{equation}\label{eq:B-global-certificate}
        \partial_\rho\widehat U(\rho,x)-\partial_\rho\widehat U(\rho,1)
        >0.039.
\end{equation}
Again this is certified by direct Arb interval evaluation on adaptively bisected boxes,
this time applied to
$G(\rho,x)=\partial_\rho\widehat U(\rho,x)-\partial_\rho\widehat U(\rho,1)$.
The 256-bit Arb run gives worst certified lower endpoint greater than $0.03912713972$ and terminates with $180$ accepted boxes and $179$ bisections. This proves Claim~(B).
\hfill$\square$

\paragraph{Proof of Proposition~\ref{prop:C}.}
Define
\[
B_0(\rho):=\partial_\rho U(\rho,0)
-
\frac14\sqrt{8(c-U(\rho,0))}\sqrt{\alpha(c-U(\rho,0))+\beta},
\qquad c=-1.
\]
Thus Claim~(C) is the assertion $B_0(\rho)>0$ on $[\bar\rho,\rho_0)$. On
$[\bar\rho,\rho_0-10^{-3}]$, direct interval bisection of the closed formula for
$B_0$ gives
\begin{equation}\label{eq:C-main-certificate}
        B_0(\rho)>4.0\cdot10^{-3}.
\end{equation}
The 256-bit Arb run gives worst certified lower endpoint greater than $0.00400690596$ and terminates with $617$ accepted intervals and $616$ bisections.

It remains to handle the endpoint sliver $[\rho_0-10^{-3},\rho_0)$. At
$\rho=\rho_0$ one has
\[
        U(\rho_0,0)=c=-1,
        \qquad
        \partial_\rho U(\rho_0,0)=0,
\]
and direct symbolic differentiation gives
\[
        \sigma(\rho_0):=-U_{\rho\rho}(\rho_0,0)=20.
\]
Let $\xi:=\rho_0-\rho>0$ and let $S_{\min},S_{\max}$ be lower and upper interval
bounds for
\[
        \sigma(\rho):=-U_{\rho\rho}(\rho,0)
\]
on $[\rho_0-\varepsilon,\rho_0]$, where $\varepsilon=10^{-3}$. Taylor's theorem
in integral form gives
\[
\partial_\rho U(\rho,0)=\int_\rho^{\rho_0}\sigma(s)\,ds,
\qquad
c-U(\rho,0)=\int_\rho^{\rho_0}\int_s^{\rho_0}\sigma(t)\,dt\,ds.
\]
Consequently
\[
\partial_\rho U(\rho,0)\geq \xi S_{\min},
\qquad
c-U(\rho,0)\leq \frac{\xi^2}{2}S_{\max},
\]
and hence
\[
\frac{B_0(\rho)}{\xi}
\geq
S_{\min}
-\frac12\sqrt{S_{\max}\left(\frac{\alpha}{2}\xi^2S_{\max}+\beta\right)}.
\]
Therefore it is enough to check
\begin{equation}\label{eq:sliver-ineq-fixed}
        4S_{\min}^2>
        S_{\max}\left(\frac{\alpha}{2}\varepsilon^2S_{\max}+\beta\right).
\end{equation}
Interval evaluation of $\sigma$ on the sliver gives
\[
        \sigma\bigl([\rho_0-10^{-3},\rho_0]\bigr)
        \subset [19.70962857978,20.29707268124].
\]
With $\alpha=384-256\sqrt2$ and $\beta=27.77$, the left-hand side of
\eqref{eq:sliver-ineq-fixed} is at least $1553.87783501208$, while the right-hand side is at most $563.65423207503$. Thus \eqref{eq:sliver-ineq-fixed} holds, and so
$B_0(\rho)>0$ on $[\rho_0-10^{-3},\rho_0)$. Together with
\eqref{eq:C-main-certificate}, this proves Claim~(C). \hfill$\square$

In summary, the six checks above--two near-strip derivative checks, two direct box checks, one one-dimensional interval bisection for $B_0$, and one analytic Taylor-sliver estimate--establish
\[
        X(H)>0
        \quad\text{on}\quad
        \Sigma_c^{m_1}\cap\pi^{-1}\bigl([\bar\rho,\rho_0)\times\mathbb S^1\bigr),
        \qquad c=-1.
\]

To prove that $X(H)|_{\Sigma^{m_1}_c\cap \pi^{-1}((0,0.39]\times \mathbb{S}^1)}>0,$ we split the problem in two parts. First we study {\em  completely analytically} the sign of $X(H)$ on $\Sigma^{m_1}_c\cap \pi^{-1}((0,0.08]\times \mathbb{S}^1)$ proving it is strictly positive. Then for the region $\Sigma^{m_1}_c\cap \pi^{-1}([0.08,0.39]\times \mathbb{S}^1)$ we prove $X(H)>0$ using validated numerics.

For the latter region, the major source of difficulty in using even a partial analytical treatment is the following. As $\rho$ decreases, the relevant absolute minima undergo a complicated bifurcation along different bifurcation curves, whose behavior is very difficult to follow and control analytically. This behavior is due to external factors in the denominator of the potential, originating from the antipodal singularities. Therefore, a method that combines a purely analytical approach as that in \cite{AL}, or mixed analytical and numerical estimates as above is not feasible.

Since at $\rho=0,$ $U$ is not even defined, we split the region $(0, 0.39]\times \mathbb{S}^1$ into two annuli, one which is just a punctured disk of the form $(0, \rho_{\mathrm{nc}}]\times \mathbb{S}^1$ and the other of the form $[\rho_{\mathrm{nc}}, 0.39]\times \mathbb{S}^1$ where $\mathrm{nc}$ stands for non-collision. For this, we first
isolate the singular contribution analytically and then we prove analytically that $\rho_{\mathrm{nc}}=0.08$ does the job.

\begin{lemma}[Near-collision positivity of the Liouville derivative]
Let \(c=-1=H(L_1)\). Then there exists \(\rho_{\mathrm{nc}}\in(0,0.39)\) such that
\[
X(H)>0
\qquad\text{on}\qquad
\Sigma^{m_1}_c\cap \pi^{-1}\big((0,\rho_{\mathrm{nc}}]\times \mathbb{S}^1\big).
\]
\end{lemma}

\begin{proof}
Recall that
\[
H=K+U,\qquad K=\frac18(f_1^2+f_2^2),
\]
and hence
\[
X(H)=X(K)+X(U)=X(K)+\rho\partial_\rho U.
\]
We use the explicit estimate proved above, namely \eqref{eq:explicit-kinetic-bound}, with
\[
\alpha=384-256\sqrt2,\qquad \beta=27.77.
\]
Thus
\[
|X(K)|\le \frac{\rho}{4}\sqrt{8K}\sqrt{\alpha K+\beta}.
\]

Write
\[
U=\mathcal U_0+\mathcal U_1+\mathcal U_2
\]
as in \eqref{U_i}. The functions \(\mathcal U_0\) and \(\mathcal U_1\) extend continuously to \(\rho=0\), hence they are bounded on \([0,r_0]\times \mathbb{S}^1\) for every fixed \(r_0>0\). Moreover, Lemma~\ref{limrhou} implies that \(\rho\mathcal U_2\) has a finite limit as \(\rho\to0^+\). Therefore there exist constants \(C_0,C_1>0\) and \(r_0>0\) such that
\[
|\mathcal U_2(\rho,\theta)|\le \frac{C_0}{\rho},
\qquad
|U(\rho,\theta)|\le \frac{C_1}{\rho}
\]
for all \((\rho,\theta)\in(0,r_0]\times \mathbb{S}^1\).

Now restrict to the energy level \(H=c=-1\). Since \(K=c-U=-1-U\) and \(K\ge0\) on the energy hypersurface, the previous estimate gives
\[
0\le K(\rho,\theta,p)\le 1+\frac{C_1}{\rho}\le \frac{C_2}{\rho}
\qquad\text{for }0<\rho\le r_0,
\]
for some constant \(C_2>0\). Consequently,
\[
|X(K)|
\le
\frac{\rho}{4}\sqrt{\frac{8C_2}{\rho}}
\sqrt{\frac{\alpha C_2}{\rho}+\beta}.
\]
Since \(0<\rho\le r_0\),
\[
\frac{\alpha C_2}{\rho}+\beta
\le
\frac{\alpha C_2+\beta r_0}{\rho},
\]
and hence
\[
|X(K)|
\le
\frac14\sqrt{8C_2}\sqrt{\alpha C_2+\beta r_0}
=:C_3.
\]
Thus \(X(K)\) is uniformly bounded on the near-collision part of the critical energy hypersurface.

On the other hand, by the Laurent expansion used in the proof of Lemma~\ref{limrhodu},
\[
\rho\partial_\rho \mathcal U_0\to0,
\qquad
\rho\partial_\rho \mathcal U_1\to0,
\]
uniformly in \(\theta\), while
\[
\rho\partial_\rho \mathcal U_2
=
\frac{2-\sqrt2}{2}\rho^{-1}+O(1)
\qquad(\rho\to0^+),
\]
uniformly in \(\theta\). Therefore there exists \(C_4>0\), after possibly decreasing \(r_0\), such that
\[
\rho\partial_\rho U(\rho,\theta)
\ge
\frac{2-\sqrt2}{2\rho}-C_4
\qquad\text{for all }(\rho,\theta)\in(0,r_0]\times \mathbb{S}^1.
\]
Combining the two estimates gives
\[
X(H)=X(K)+\rho\partial_\rho U
\ge
\frac{2-\sqrt2}{2\rho}-C_3-C_4.
\]
The right-hand side is positive for all \(0<\rho\le \rho_{\mathrm{nc}}\), for some sufficiently small \(\rho_{\mathrm{nc}}\in(0,r_0]\). This proves the lemma.
\end{proof}

The preceding lemma is qualitative. We now verify, using the explicit constants from
\eqref{eq:explicit-alpha-beta}, that the concrete cutoff \(\rho_{\mathrm{nc}}=0.08\) works.  The proof below is analytic; the final decimal comparisons are scalar Arb interval checks included in the same verifier.

\begin{lemma}[Explicit near-collision cutoff]\label{lem:rho-nc-explicit}
Let \(c=-1=H(L_1)\), and let
\[
\alpha=384-256\sqrt2,
\qquad
\beta=27.77.
\]
Then one may choose
\[
\rho_{\mathrm{nc}}=0.08.
\]
Equivalently,
\[
X(H)>0
\quad\text{on}\quad
\Sigma_c^{m_1}\cap \pi^{-1}\bigl((0,0.08]\times \mathbb{S}^1\bigr).
\]
\end{lemma}

\begin{proof}
Set
\[
a:=\sqrt2-1,
\qquad
\delta:=0.08.
\]
By \eqref{eq:explicit-kinetic-bound}, at a point of the energy hypersurface over the Hill region it is enough to prove the positivity of
\[
F(\rho,\theta):=
\rho\,\partial_\rho U(\rho,\theta)
-\frac{\rho}{4}\sqrt{8\bigl(c-U(\rho,\theta)\bigr)}
 \sqrt{\alpha\bigl(c-U(\rho,\theta)\bigr)+\beta},
\]
where \(c=-1\). We prove this for every point of the Hill region with \(0<\rho\le \delta\).

We write
\[
U=\mathcal U_0+\mathcal U_1+\mathcal U_2
\]
with the following simplified near-collision formulas, obtained from \eqref{U_i}:
\[
\mathcal U_0(\rho,\theta)= -\frac{2A}{(A+1)^2},
\qquad
A=\rho^2-2a\rho\cos\theta+a^2,
\]
\[
\mathcal U_1(\rho,\theta)=\frac{n(\rho,\theta)}{2\sqrt{E_1(\rho,\theta)}},
\qquad
n(\rho,\theta)=\rho(\rho-2\sqrt2\cos\theta),
\]
where
\[
E_1(\rho,\theta)=
(\rho^2+4\rho\cos\theta+4)(\rho^2-4a\rho\cos\theta+4a^2),
\]
and
\[
\mathcal U_2(\rho,\theta)=
\frac{\frac12\rho^2+(2-\sqrt2)\rho\cos\theta-2a}
     {\rho\sqrt{\rho^2-4\sqrt2\,\rho\cos\theta+8}}.
\]

First we estimate the singular term \(\mathcal U_2\). A direct differentiation gives
\[
\rho\,\partial_\rho \mathcal U_2=
\frac{
-2\rho^3\cos\theta
+4a\rho^2\cos^2\theta
+4\sqrt2\,\rho^2
+(12\sqrt2-24)\rho\cos\theta
+16a
}
{\rho\bigl(\rho^2-4\sqrt2\,\rho\cos\theta+8\bigr)^{3/2}}.
\]
Since \(a>0\), \(\cos^2\theta\ge0\), \(\cos\theta\le1\), and
\[
\rho^2-4\sqrt2\,\rho\cos\theta+8
\le
\rho^2+4\sqrt2\,\rho+8,
\]
we obtain
\[
\rho\,\partial_\rho \mathcal U_2
\ge
\frac{
16a+(12\sqrt2-24)\rho+4\sqrt2\,\rho^2-2\rho^3
}
{\rho(\rho^2+4\sqrt2\,\rho+8)^{3/2}}.
\]
The numerator in the last fraction is decreasing on \((0,\delta]\), because
\[
(12\sqrt2-24)+8\sqrt2\,\rho-6\rho^2<0
\qquad\text{for }0<\rho\le \delta,
\]
and the denominator is increasing. Hence
\[
\rho\,\partial_\rho \mathcal U_2
\ge
\frac{
16a+(12\sqrt2-24)\delta+4\sqrt2\delta^2-2\delta^3
}
{\delta(\delta^2+4\sqrt2\delta+8)^{3/2}}
>3.099.
\]

For \(\mathcal U_0\), differentiating yields
\[
\rho\,\partial_\rho \mathcal U_0
=
\frac{4\rho(A-1)(\rho-a\cos\theta)}{(A+1)^3}.
\]
Since \(0<\rho\le \delta\), we have
\[
0\le A\le (a+\delta)^2<1,
\]
hence
\[
|A-1|\le1,
\qquad
(A+1)^3\ge1,
\qquad
|\rho-a\cos\theta|\le \rho+a.
\]
Thus
\[
\rho\,\partial_\rho \mathcal U_0
\ge -4\rho(\rho+a)
\ge -4\delta(\delta+a)
>-0.159.
\]

Now consider \(\mathcal U_1\). Since
\[
\mathcal U_1=\frac{n}{2\sqrt{E_1}},
\]
we have
\[
\rho\,\partial_\rho \mathcal U_1
=
\rho\left(
\frac{\partial_\rho n}{2\sqrt{E_1}}
-\frac{n\,\partial_\rho E_1}{4E_1^{3/2}}
\right).
\]
On \(0<\rho\le \delta\),
\[
|n(\rho,\theta)|
\le
\delta(\delta+2\sqrt2)
<0.233,
\]
\[
|\partial_\rho n(\rho,\theta)|
=
|2\rho-2\sqrt2\cos\theta|
\le
2\delta+2\sqrt2
<2.989.
\]
Moreover,
\[
\rho^2+4\rho\cos\theta+4
\ge
4-4\rho+\rho^2
\ge
4-4\delta+\delta^2
=3.6864,
\]
and
\[
\rho^2-4a\rho\cos\theta+4a^2
\ge
4a^2-4a\rho+\rho^2
=(2a-\rho)^2
\ge
(2a-\delta)^2.
\]
Hence
\[
E_1(\rho,\theta)
\ge 3.6864(2a-\delta)^2
>2.064.
\]
Also
\[
\partial_\rho E_1
=
(2\rho+4\cos\theta)(\rho^2-4a\rho\cos\theta+4a^2)
+(
\rho^2+4\rho\cos\theta+4)(2\rho-4a\cos\theta),
\]
so
\[
|\partial_\rho E_1(\rho,\theta)|
\le
(2\rho+4)(\rho^2+4a\rho+4a^2)
+(
\rho^2+4\rho+4)(2\rho+4a)
<11.30.
\]
Therefore
\[
|\rho\,\partial_\rho \mathcal U_1|
\le
\delta\left(
\frac{2.989}{2\sqrt{2.064}}
+
\frac{0.233\cdot 11.30}{4(2.064)^{3/2}}
\right)
<0.102,
\]
and thus
\[
\rho\,\partial_\rho \mathcal U_1\ge -0.102.
\]

Combining the three estimates above, we obtain
\[
\rho\,\partial_\rho U
=
\rho\,\partial_\rho \mathcal U_0
+\rho\,\partial_\rho \mathcal U_1
+\rho\,\partial_\rho \mathcal U_2
>
3.099-0.159-0.102
>
2.83.
\]

It remains to estimate the kinetic correction term. On \(\Sigma_c^{m_1}\) we have
\[
K=c-U=-1-U.
\]
First,
\[
|\mathcal U_0|
\le
\frac{2A}{(A+1)^2}
\le
2A
\le
2(a+\delta)^2
<0.489.
\]
Also,
\[
|\mathcal U_1|
\le
\frac{0.233}{2\sqrt{2.064}}
<0.082.
\]
Hence
\[
-1-\mathcal U_0-\mathcal U_1
\le
-1+|\mathcal U_0|+|\mathcal U_1|
< -0.42<0.
\]
Next, the numerator of \(\mathcal U_2\) is negative on \(0<\rho\le\delta\), because
\[
\frac12\rho^2+(2-\sqrt2)\rho\cos\theta-2a
\le
\frac12\delta^2+(2-\sqrt2)\delta-2a
<0.
\]
Therefore \(\mathcal U_2<0\), and on the energy hypersurface
\[
K=-1-\mathcal U_0-\mathcal U_1-\mathcal U_2<-\mathcal U_2=|\mathcal U_2|.
\]
Moreover,
\[
|\mathcal U_2|
=
\frac{
2a-(2-\sqrt2)\rho\cos\theta-\frac12\rho^2
}
{\rho\sqrt{\rho^2-4\sqrt2\,\rho\cos\theta+8}}
\le
\frac{2a+(2-\sqrt2)\delta}
{\rho\sqrt{8-4\sqrt2\delta}}
<
\frac{0.319}{\rho}
<
\frac{0.32}{\rho}.
\]
Hence
\[
0\le K<\frac{0.32}{\rho}.
\]
It follows from \eqref{eq:explicit-kinetic-bound} that
\[
\frac{\rho}{4}\sqrt{8K}\sqrt{\alpha K+\beta}
\le
\frac{\rho}{4}\sqrt{\frac{8\cdot0.32}{\rho}}
\sqrt{\frac{\alpha\cdot0.32}{\rho}+\beta}.
\]
Since \(\rho\le\delta=0.08\),
\[
\frac{\alpha\cdot0.32}{\rho}+\beta
\le
\frac{\alpha\cdot0.32+0.08\,\beta}{\rho},
\]
and thus, with \(\alpha=384-256\sqrt2\) and \(\beta=27.77\),
\[
\frac{\rho}{4}\sqrt{8K}\sqrt{\alpha K+\beta}
\le
\frac14\sqrt{8\cdot0.32}\,
\sqrt{\alpha\cdot0.32+0.08\,\beta}
<1.22.
\]
The corresponding 256-bit Arb certificates give lower endpoint \(3.09943483536\) for the singular \(\rho\partial_\rho\mathcal U_2\) contribution, lower endpoint \(-0.15814833996\) for the \(\rho\partial_\rho\mathcal U_0\) contribution, upper endpoint \(0.10090797893\) for \(|\rho\partial_\rho\mathcal U_1|\), and kinetic correction at most \(1.21650153936\).  The certified final margin is greater than \(1.62387697711\).
Finally,
\[
F(\rho,\theta)
>
2.83-1.22
>
1.61
>
0
\]
for every point of the Hill region with \(0<\rho\le0.08\). By \eqref{imp.eq}, this implies
\[
X(H)>0
\quad\text{on}\quad
\Sigma_c^{m_1}\cap \pi^{-1}\bigl((0,0.08]\times \mathbb{S}^1\bigr).
\]
This proves the lemma.
\end{proof}

It remains therefore to study the sign of \(X(H)\) over the inverse image of the compact annulus
\[
        [0.08,0.39]\times \mathbb{S}^1.
\]
We again use \(x=\cos\theta\), the closed form \eqref{eq:Uhat-reduction}, and the
constants \eqref{eq:explicit-alpha-beta}. Put
\[
        T(\rho,x):=-1-\widehat U(\rho,x)
\]
and, on the Hill region where \(T\geq0\), define
\[
        F_{\rm mid}(\rho,x):=
        \rho\,\partial_\rho \widehat U(\rho,x)
        -\frac{\rho}{4}\sqrt{8T(\rho,x)}
        \sqrt{\alpha T(\rho,x)+\beta}.
\]
By \eqref{Xhsigma}, positivity of \(F_{\rm mid}\) gives positivity of
\(X(H)\) on the corresponding part of the energy hypersurface.

The validated computation is performed directly on the full rectangle
\([0.08,0.39]\times[-1,1]\), restricted to the Hill region \(T\geq0\).  On a box \(B\), the verifier encloses \(\widehat U\), \(\partial_\rho\widehat U\), and an upper bound for \(T\) using Arb ball arithmetic.  If the upper endpoint of \(T\) is negative, then \(B\) contains no Hill point and is discarded.  Otherwise, since
\[
T\longmapsto \sqrt{8T}\sqrt{\alpha T+\beta}
\]
is increasing for \(T\geq0\), the upper endpoint of \(T(B)\) and the upper endpoint of the \(\rho\)-interval give a rigorous upper bound for the kinetic correction.  The term \(\rho\partial_\rho\widehat U\) is bounded from below by interval arithmetic.  A box is accepted only when these bounds prove \(F_{\rm mid}>0.078\) on \(B\cap\{T\geq0\}\).  Adaptive bisection of the whole rectangle gives the certified bound
\[
F_{\rm mid}(\rho,x)>0.078
\]
for all \((\rho,x)\in[0.08,0.39]\times[-1,1]\) with \(T(\rho,x)\geq0\).  The worst certified lower endpoint in the 256-bit Arb run was greater than \(0.0780001634926\); the check used \(1865\) accepted boxes and \(1864\) bisections.  This single computation covers all boundary strips of the middle annulus.  The complete implementation is provided in the ancillary file \texttt{rigorous\_validated\_numerics.py}.

Consequently, by \eqref{Xhsigma},
\[
X(H)>0.078
\quad\text{on}\quad
\Sigma^{m_1}_c\cap \pi^{-1}\big([0.08,0.39]\times \mathbb{S}^1\big),
\qquad c=H(L_1)=-1.
\]

Combining this estimate with the previous lemma and with the positivity already
proved on \([0.39,\sqrt2-1)\times \mathbb{S}^1\), we conclude that
\[
X(H)>0
\quad\text{on}\quad
\Sigma^{m_1}_c\cap \pi^{-1}((0, \sqrt{2}-1)\times \mathbb{S}^1),
\qquad c=H(L_1)=-1.
\]

At the critical value $c=-1$, the inequality $dH(X)>0$ holds on the punctured critical component away from $L_1$. This limiting estimate implies the desired transversality for every regular subcritical level $c<-1$ as the second part below shows. 
  
  {\bf Second part:}
  
        Now consider the case when $ \tilde{c}<c=-1 $. By the definition $ \Lambda_{c}^{m_1} $ is the connected component of the set $ \{U\leq c\} $ containing $ m_1 $ and when $ \tilde{c}<c$ we have $ \Lambda_{\tilde{c}}^{m_1}\subset \{U\leq \tilde{c}<c\} \subset \{U\leq c\} $. Hence $ \Lambda_{\tilde{c}}^{m_1}$ is completely contained in $ \Lambda_{c}^{m_1} $ and in $\tilde{\mathbb{D}}$ by Lemma \ref{lambda}.
		
        In the first part we proved the following pointwise estimate on the whole set $\Lambda_{c}^{m_1}\cap\tilde{\mathbb{D}}$ (with $c=-1$):
        \begin{equation}\label{eq:pointwise-estimate}
            \rho\,\partial_{\rho}U(\rho,\theta)
            \;>\;
            \frac{\rho}{4}\,\sqrt{8\bigl(-1-U(\rho,\theta)\bigr)}\,
            \sqrt{\alpha\bigl(-1-U(\rho,\theta)\bigr)+\beta},
        \end{equation}
        and we obtained explicit positive lower bounds for the difference of the two sides in each subregion.

        Now pick any point $P=(\rho,\theta)\in\Lambda_{\tilde{c}}^{m_1}$.
        Since $\Lambda_{\tilde{c}}^{m_1}\subset\Lambda_{c}^{m_1}\cap\tilde{\mathbb{D}}$,
        the estimate \eqref{eq:pointwise-estimate} holds at $P$.
        For every $c'$ with $\tilde{c}\leq c'\leq -1$ we have
        $c'-U(P)\leq -1-U(P)$.  
        The function
        \[
            T\;\longmapsto\;\frac{\rho}{4}\sqrt{8T}\,\sqrt{\alpha T+\beta},
            \qquad T>0,
        \]
        is strictly increasing.  Hence at $P$,
        \begin{align*}
            \rho\,\partial_{\rho}U(P)
            &>\frac{\rho}{4}\sqrt{8\bigl(-1-U(P)\bigr)}\,
               \sqrt{\alpha\bigl(-1-U(P)\bigr)+\beta} \\
            &\geq \frac{\rho}{4}\sqrt{8\bigl(\tilde{c}-U(P)\bigr)}\,
               \sqrt{\alpha\bigl(\tilde{c}-U(P)\bigr)+\beta}.
        \end{align*}
        The first inequality is exactly \eqref{eq:pointwise-estimate}; the second uses the monotonicity of the square-root expression together with $\tilde{c}-U(P)\leq -1-U(P)$.

        Because $P\in\Lambda_{\tilde{c}}^{m_1}$ was arbitrary, we obtain
        \[
            \rho\,\partial_{\rho}U
            \;>\;
            \frac{\rho}{4}\sqrt{8(\tilde{c}-U)}\,
            \sqrt{\alpha(\tilde{c}-U)+\beta}
            \qquad\text{on the whole set }\Lambda_{\tilde{c}}^{m_1}.
        \]
        By the inequality \eqref{imp.eq} this is equivalent to $X(H)>0$ on the corresponding energy hypersurface $\Sigma^{m_1}_{\tilde c}$.  Therefore $X(H)>0$ on $\Sigma^{m_1}_{\tilde c}$ for every $\tilde{c}<c=-1$. 
	\end{proof}

\begin{remark}
    A major source of difficulty to blend in analytical estimates with numerical estimates is indeed the bifurcating behavior of the absolute minimum of $U(\rho, \theta)$ as a function of $\rho,$ as we already pointed out in the proof. This behaviour does not appear in the Euclidean planar case, analyzed in \cite{AL}.
\end{remark}

	\section{Regularization of Kepler Problem on 2-Sphere}
	In this section we regularize the Kepler problem on a 2-sphere by using arguments similar to those introduced by Moser \cite{Moser}. This is a preliminary step in regularizing the energy hypersurfaces we are interested in. The Kepler's problem on the two dimensional sphere consists of one particle fixed at the north pole while the second body is moving. This problem is integrable unlike the case of the two-body problem on a sphere (see \cite{Shchepetilov,Bolotin},  for a recent treatment of collisions and
regularisation in the spherical two-body problem see also \cite{ArsieBalabanova-collisions}). 

 Considering the spherical metric and fixing one of the masses on the north pole $ (0,0,1),$ the Hamiltonian for the Kepler problem on $ \mathbb{S}^2 $ projected to $ \mathbb{R}^2 $ by stereographic projection along south pole to the equatorial plane
 can be deduced from \cite{ALM} and the equation \ref{H} as:
	\begin{equation}
		H=\dfrac{(q_1^2+q_2^2+1)^2(p_1^2+p_2^2)}{8}+q_2p_1-q_1p_2-\dfrac{1-(q_1^2+q_2^2)}{2\sqrt{q_1^2+q_2^2}}.
	\end{equation}
	We can simply write the above Hamiltonian as
	\begin{equation}\label{eq:HamKeplerqp}
		H=\dfrac{(|q|^2+1)^2|p|^2}{8}-\dfrac{1-|q|^2}{2|q|}+q_2p_1-q_1p_2,
	\end{equation} with the Hamiltonian equations
	\begin{equation}
		\begin{split}
			&\dfrac{dq}{dt}=\dot{q}=H_p,\\
			&\dfrac{dp}{dt}=\dot{p}=-H_q.
		\end{split}
	\end{equation}
    Notice that \eqref{eq:HamKeplerqp} is the Hamiltonian of a perturbed Kepler problem in the plane. 
	We are interested in regularizing the collision orbits. Before proceeding we introduce the following, which will be useful to identify the collision set in the transformed coordinates $(\eta, \xi)$ (see below).
	\begin{lemma}\label{alongcollision}
		Along collision orbits, the norm of the momenta $|p|$ goes to infinity.
	\end{lemma}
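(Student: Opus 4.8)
\noindent\textbf{Proof plan.} The plan is to extract the blow-up of $|p|$ directly from conservation of energy. Since the attracting mass is fixed at the north pole and we stereographically project from the south pole, a collision orbit is one along which $|q|\to 0$ (as $t$ tends to the collision time $t^{\ast}$), and there the gravitational term diverges: $-\frac{1-|q|^2}{2|q|}\to-\infty$. First I would use that $H\equiv e$ along the orbit; rewriting \eqref{eq:HamKeplerqp} gives, at every point of the orbit,
$$\frac{(|q|^2+1)^2|p|^2}{8}+q_2p_1-q_1p_2=e+\frac{1-|q|^2}{2|q|}.$$

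Next I would control the two terms on the left that are not manifestly large. The Coriolis-type term obeys the Cauchy--Schwarz bound $|q_2p_1-q_1p_2|\le |q|\,|p|$, and for $|q|\le 1$ one has $1\le(|q|^2+1)^2\le 4$. Combining these with the identity above yields, for all points of the orbit with $|q|\le 1$,
$$\frac{|p|^2}{2}+|p|\ \ge\ \frac{(|q|^2+1)^2|p|^2}{8}+q_2p_1-q_1p_2\ =\ e+\frac{1-|q|^2}{2|q|}.$$
As the orbit approaches collision the right-hand side tends to $+\infty$, hence $\frac12|p|^2+|p|\to+\infty$, which forces $|p|\to+\infty$. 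Concretely: if $|p|$ stayed bounded by some $N$ along a sequence of times tending to $t^{\ast}$, the left-hand side would stay bounded by $\frac12 N^2+N$ while the right-hand side diverges, a contradiction.

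The argument is almost entirely algebraic, so I do not expect a genuine obstacle; the only point needing a little care is the bookkeeping of what ``along a collision orbit'' means --- one fixes the collision time $t^{\ast}$, works with $t\to t^{\ast}$, and applies the inequality only once $|q(t)|\le 1$, which holds for $t$ close enough to $t^{\ast}$ precisely because $|q(t)|\to 0$. No regularity beyond continuity of the trajectory and conservation of $H$ is used, and the same scheme (now with the full potential $U$ of \eqref{H} and its single divergent summand near a primary) is what will later let us locate the collision set in the regularized coordinates $(\eta,\xi)$.
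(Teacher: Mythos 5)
Your proposal is correct and rests on exactly the same idea as the paper's proof: conservation of $H$ along the orbit combined with the divergence of the potential term $-\frac{1-|q|^2}{2|q|}$ as $|q|\to 0^+$, with the kinetic and Coriolis terms controlled in terms of $|p|$. The paper phrases it as a contradiction (a bounded subsequence of $|p(t_n)|$ would force $H\to-\infty$), while you run the same estimate directly, but the content is identical.
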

	\begin{proof}
		Fix a finite energy level in which the collision trajectory lies. 
		By definition, on a collision orbit we have that $|q|\rightarrow 0^{+}.$ Suppose that $|p|$ does not go to $+\infty.$ This means that there exists an increasing sequence of times $\{t_n\}$ converging to collision time $t^*$ such that $|q(t_n)|\rightarrow 0$ as $n\rightarrow +\infty$ and $|p(t_n)|$ remains bounded. Evaluating the Hamiltonian \eqref{eq:HamKeplerqp} along this sequence of times we get $H(q(t_n),p(t_n))\rightarrow -\infty,$ as $n\rightarrow +\infty$, which contradicts the fact that $H$ is a finite constant along the Hamiltonian flow. 
	\end{proof}
	
	Now we introduce a new time variable $ s $ by 
	\begin{equation}
		s=\int\dfrac{dt}{|q|},	
	\end{equation}
	and consider the following Hamiltonian $K$
	\begin{equation}
		H=\dfrac{K}{|q|}+k,
	\end{equation}
	where $ k $ is the energy value that is to be regularized. So the level sets corresponding to $\{H=k\}$ with the time parameter t corresponds to the level set $ \{K=0\} $ with the time parameter s and we have 
	\begin{equation}\label{HpKp}
		H_p=\dfrac{K_{p}}{|q|}, \ \ H_q=\dfrac{K_{q}}{|q|}+K\nabla_q\Big(\dfrac{1}{|q|}\Big).
	\end{equation} 
	Then the orbits of $ H $ with energy value $ k $ i.e., $ \{H=k\} $ correspond to the orbit of $ K $ with zero energy ( $ \{K=0\} $ resp.), so the last term in the equation \ref{HpKp} disappears, and we have 
	
	\begin{equation}
		H_p=\dfrac{K_{p}}{|q|}, \ \ H_q=\dfrac{K_{q}}{|q|}.
	\end{equation} 
	
	The new Hamiltonian can be expressed as
	\begin{equation}\label{K}
		K=\dfrac{|p|^2|q|^5}{8}+\dfrac{|p|^2|q|^3}{4}+\dfrac{|q|^2}{2}+\Bigg(\dfrac{|p|^2}{8}+q_2p_1-q_1p_2-k\Bigg)|q|-\dfrac{1}{2}.
	\end{equation}
	After performing canonical transformation $p=(p_1,p_2)=-(x_1, x_2)=-x$ and $ q=(q_1, q_2)=(y_1,y_2)=y$ we have 
	\begin{equation}\label{Kxy}
		K=\dfrac{|x|^2|y|^5}{8}+\dfrac{|x|^2|y|^3}{4}+\dfrac{|y|^2}{2}+\Bigg(\dfrac{|x|^2}{8}+y_1x_2-y_2x_1-k\Bigg)|y|-\dfrac{1}{2}.
	\end{equation}
	
	\subsection{Stereographic Projection and Geodesic Flow on Unit 2-Sphere}
	Let $\xi=(\xi_0,\xi_1,\xi_2)$ be a point of $\mathbb{R}^3$ with $|\xi|=1$.
	The geodesic flow on the unit sphere is the constrained Euler--Lagrange flow of
	\begin{equation}\label{ds}
		\delta\int |\xi'|^2\,ds=0,
	\end{equation}
	where $\xi'=d\xi/ds$ and $|\xi|=1$.  The Euler--Lagrange equation is
	\begin{equation}\label{geodesic.second.order}
		\xi''+|\xi'|^2\xi=0.
	\end{equation}
	Indeed, the Lagrange multiplier equation has the form $\xi''=\lambda\xi$; taking the scalar product with $\xi$ and using $|\xi|=1$ gives
	$\lambda=-|\xi'|^2$.  Along a solution, $|\xi'|$ is constant.  If the curve is parametrized by arclength, then $|\xi'|=1$ and \eqref{geodesic.second.order} becomes $\xi''+\xi=0$.
	
	Writing $\eta=\xi'$, we obtain the first-order system
	\begin{equation}\label{xieta}
		\xi'=\eta, \qquad \eta'=-|\eta|^2\xi,
	\end{equation}
	with constraints
	\[
		|\xi|=1,\qquad \langle\xi,\eta\rangle=\sum_{i=0}^2\xi_i\eta_i=0.
	\]
	Thus the phase space is $T\mathbb{S}^2$, identified with $T^*\mathbb{S}^2$ by the round metric.  Equivalently, \eqref{xieta} is the restriction to the invariant submanifold
	$\{|\xi|=1,\langle\xi,\eta\rangle=0\}\subset\mathbb{R}^3\times\mathbb{R}^3$ of the Hamiltonian system
	\begin{equation}
		\xi'=\Phi_\eta,\qquad \eta'=-\Phi_\xi,
	\end{equation}
	where
	\begin{equation}
		\Phi(\xi,\eta)=\frac12|\eta|^2|\xi|^2.
	\end{equation}
	On $|\xi|=1$ this agrees with the usual geodesic Hamiltonian $\frac12|\eta|^2$.
	
	To describe the flow in $\mathbb{R}^2$, we use Moser's stereographic coordinates, projecting from $(1,0,0)$ to the $\xi_1\xi_2$-plane:
	\begin{equation}\label{xk}
		x_k=\frac{\xi_k}{1-\xi_0},\qquad k=1,2.
	\end{equation}
	We extend this map to the tangent/cotangent variables by requiring preservation of the canonical one-form,
	\begin{equation*}
		\sum_{i=0}^{2}\eta_i\,d\xi_i=\sum_{k=1}^{2}y_k\,dx_k.
	\end{equation*}
	This gives
	\begin{equation}\label{yk}
		y_k=\eta_k(1-\xi_0)+\xi_k\eta_0,
		\qquad k=1,2.
	\end{equation}
	The inverse map is obtained as follows.  From \eqref{xk} and $|\xi|=1$,
	\begin{equation}
		|x|^2=\sum_{k=1}^{2}x_k^2=\frac{1-\xi_0^2}{(1-\xi_0)^2}=\frac{1+\xi_0}{1-\xi_0},
	\end{equation}
	so
	\begin{equation}\label{xi0}
		\xi_0=\frac{|x|^2-1}{|x|^2+1},\qquad
		\xi_k=\frac{2x_k}{|x|^2+1},\quad k=1,2.
	\end{equation}
	Using \eqref{yk} and $\langle\xi,\eta\rangle=0$, we find
	\begin{equation*}
		\begin{split}
			(x,y)&=x_1y_1+x_2y_2\\
			&=\frac{\xi_1}{1-\xi_0}\big(\eta_1(1-\xi_0)+\xi_1\eta_0\big)
			 +\frac{\xi_2}{1-\xi_0}\big(\eta_2(1-\xi_0)+\xi_2\eta_0\big)\\
			&=\frac{(1-\xi_0)(\xi_1\eta_1+\xi_2\eta_2)+\eta_0(\xi_1^2+\xi_2^2)}{1-\xi_0}\\
			&=-\xi_0\eta_0+(1+\xi_0)\eta_0=\eta_0.
		\end{split}
	\end{equation*}
	Therefore
	\begin{equation}\label{eta0}
		\eta_0=(x,y),\qquad
		\eta_k=\frac{|x|^2+1}{2}y_k-(x,y)x_k,
		\qquad k=1,2.
	\end{equation}
	For every $x,y\in\mathbb{R}^2$, equations \eqref{xi0} and \eqref{eta0} give $|\xi|=1$ and $\langle\xi,\eta\rangle=0$.  Moreover,
	\begin{equation}\label{form}
		|\eta|=\frac{(|x|^2+1)|y|}{2}=\frac{|y|}{1-\xi_0},
		\qquad
		|x|^2=\frac{1+\xi_0}{1-\xi_0}.
	\end{equation}
	We shall also use the elementary estimate
	\begin{equation}\label{fnorm}
		(1-\xi_0)|\eta_1\xi_2-\eta_2\xi_1|
		\leq (1-\xi_0)|\eta|\sqrt{\xi_1^2+\xi_2^2}
		\leq (1-\xi_0)|\eta|=|y|.
	\end{equation}
	
	We now return to the Hamiltonian \eqref{Kxy}.  Put
	\[
		\mu:=1-\xi_0,
		\qquad r:=|\eta|.
	\]
	Since $|y|=\mu r$ and $|x|^2=(1+\xi_0)/\mu$, and since
	\[
		y_1x_2-y_2x_1=\eta_1\xi_2-\eta_2\xi_1,
	\]
	the equation $K=0$ is transformed into
	\begin{equation}\label{ham33.eq}
		r\,f(\xi,\eta)=\frac12,
	\end{equation}
	where
	\begin{equation}\label{kepler.f}
		\begin{split}
		f(\xi,\eta)=&\frac{(1+\xi_0)\mu^4r^4}{8}
		+\frac{(1+\xi_0)\mu^2r^2}{4}
		+\frac{\mu^2r}{2}
		+\frac{1+\xi_0}{8}\\
		&+\mu(\eta_1\xi_2-\eta_2\xi_1)-k\mu .
		\end{split}
	\end{equation}
	Near the compactified collision set $r$ is bounded away from zero, and hence $r=|\eta|$ is a smooth function.  Thus \eqref{kepler.f} defines a smooth function in a neighbourhood of the collision set, although it should not be regarded as a smooth function on the whole zero section of $T^*\mathbb{S}^2$.
	
	Let us identify the collision set in these coordinates.  Along a collision orbit $|q|=|y|\to0$ and, by Lemma \ref{alongcollision}, $|p|=|x|\to\infty$.  Hence $\xi_0\to1$ and $\xi_1,\xi_2\to0$.  Passing to the limiting equation \eqref{ham33.eq}, we get
	\[
		\frac{r}{4}=\frac12,
		\qquad\text{so}\qquad r=2.
	\]
	Therefore the compactified collision set is the circle
	\begin{equation}\label{collision.circle.kepler}
		\mathcal C=\{\xi=(1,0,0),\ \eta_0=0,
		\ \eta_1^2+\eta_2^2=4\}\subset T^*\mathbb{S}^2.
	\end{equation}
	In a sufficiently small neighbourhood of $\mathcal C$ we have $f>0$.  Consequently the equation \eqref{ham33.eq} is equivalent there to
	\begin{equation}\label{Q}
		Q(\xi,\eta):=\frac12 r^2 f(\xi,\eta)^2=\frac18.
	\end{equation}
	On this branch the Hamiltonian vector fields defined by $r f-\frac12$ and by $Q-\frac18$ agree up to a positive time reparametrization, because
	\[
		dQ=(rf)\,d(rf)=\frac12\,d(rf)
		\qquad\text{on } rf=\frac12.
	\]
	At the collision set, $Q=\frac{1}{32}|\eta|^2$; thus the regularized dynamics limits, up to a constant rescaling of time, to the geodesic flow on the unit sphere.
	
	It remains to verify the contact-type condition near $\mathcal C$.  Let
	\[
		X=\eta\partial_\eta
	\]
	be the radial Liouville vector field in the cotangent fibres.  Since $Q=\frac12 r^2f^2$,
	\begin{equation}\label{X(Q)}
		X(Q)=r^2f^2+r^2f\,X(f)=2Q+r^2f\,X(f).
	\end{equation}
	A direct differentiation of \eqref{kepler.f} gives
	\begin{equation}\label{Xf.kepler}
		\begin{split}
		X(f)=&\frac{(1+\xi_0)\mu^4r^4}{2}
		+\frac{(1+\xi_0)\mu^2r^2}{2}
		+\frac{\mu^2r}{2}
		+\mu(\eta_1\xi_2-\eta_2\xi_1).
		\end{split}
	\end{equation}
	In particular, $X(f)=0$ on $\mathcal C$.  Since $\mathcal C$ is compact and $Q=1/8$ gives $r=2$ on $\mathcal C$, we can choose a neighbourhood $\mathcal U$ of $\mathcal C$ in which $f>1/8$, $r<4$ on $Q^{-1}(1/8)$, and $|X(f)|<1/16$.  On $Q^{-1}(1/8)\cap\mathcal U$ we then have $rf=1/2$ and hence
	\[
		X(Q)=\frac14+\frac{r}{2}X(f)
		\geq \frac14-2\cdot\frac{1}{16}
		=\frac18>0.
	\]
	Thus $X$ is transverse to the regularized Kepler hypersurface $Q^{-1}(1/8)$ in a neighbourhood of the compactified collision circle.

\section{Regularization of the Level Sets of the Hamiltonian H}
	In this section we apply the Moser-type regularization from Section~4.1 to the component
	$\Sigma^{m_1}_c$ for energies $c<H(L_1)=-1$.  We write
	\[
		a:=\sqrt2-1,
		\qquad q_*:=q_{m_1}=(-a,0),
		\qquad \bar q_*:=\bar q_{m_1}=(\sqrt2+1,0),
	\]
	and also $q_+:=q_{m_2}=(a,0)$, $\bar q_+:=\bar q_{m_2}=(-\sqrt2-1,0)$.
	With this notation the Hamiltonian can be written as
	\begin{equation}\label{H_section5}
	\begin{split}
	H(q,p)=&\frac{(|q|^2+1)^2|p|^2}{8}+q_2p_1-q_1p_2
	-\frac{q_1-\frac12(|q|^2-1)}{|q-q_+|\,|q-\bar q_+|} \\
	&-\frac{-q_1-\frac12(|q|^2-1)}{|q-q_*|\,|q-\bar q_*|}.
	\end{split}
	\end{equation}
	Only the last denominator is singular at the collision with $m_1$; all other
	denominators are bounded away from zero on a sufficiently small neighbourhood of
	this collision and, for $c<-1$, on the component under consideration.
	For a fixed energy value $k<-1$ set
	\begin{equation}\label{E_definition}
		E(q,p):=|q-q_*|\big(H(q,p)-k\big).
	\end{equation}
	Then $H=k$ is equivalent to $E=0$ away from $q=q_*$, and the Hamiltonian flows
	are related by the positive time change used in Section~4.1.

	We now make the canonical change of coordinates
	\[
		p=-x,
		\qquad q-q_*=y,
	\]
	so that $q=y+q_*$.  A useful point, which fixes the constant in the regularized
	formula, is the pair of identities
	\begin{equation}\label{numerator_cancellations}
	\begin{split}
	q_1-\frac12(|q|^2-1)&=\sqrt2\,y_1-\frac12|y|^2,\\
	-q_1-\frac12(|q|^2-1)&=(\sqrt2-2)y_1-\frac12|y|^2+2\sqrt2-2.
	\end{split}
	\end{equation}
	
	Substitution in \eqref{E_definition} gives
	\begin{equation}\label{eqnE}
	\begin{split}
	E(x,y)=&\frac{(|y+q_*|^2+1)^2|x|^2|y|}{8}
	+\big(-y_2x_1+(y_1-a)x_2-k\big)|y|\\
	&-\frac{\left(\sqrt2\,y_1-\frac12|y|^2\right)|y|}
	{|y+2q_*|\,|y+q_*+\bar q_*|}
	-\frac{(\sqrt2-2)y_1-\frac12|y|^2+2\sqrt2-2}
	{|y+q_* -\bar q_*|}.
	\end{split}
	\end{equation}
	The last term is now nonsingular at $y=0$, while the other terms vanish at least
	linearly in $|y|$.

	Next we compactify the $x$-variable by inverse stereographic projection.  We use
	the notation of Section~4.1:
	\[
		x_j=\frac{\xi_j}{1-\xi_0},
		\qquad y_j=\eta_j(1-\xi_0)+\xi_j\eta_0,
		\qquad j=1,2,
	\]
	where $\xi\in \mathbb{S}^2$, $\eta\in T^*_{\xi}\mathbb{S}^2$, and
	\[
		|\eta|=\frac{|y|}{1-\xi_0},
		\qquad |x|^2=\frac{1+\xi_0}{1-\xi_0}.
	\]
	Set
	\[
		\mu:=1-\xi_0,
		\qquad r:=|\eta|,
		\qquad
		Y(\xi,\eta):=\big(\mu\eta_1+\xi_1\eta_0,\,\mu\eta_2+\xi_2\eta_0\big).
	\]
	Thus $Y=y$ and $|Y|=\mu r$.  Let
	\[
		D_2(Y):=|Y+2q_*|\,|Y+q_*+\bar q_*|,
		\qquad
		D_*(Y):=|Y+q_* -\bar q_*|.
	\]
	Using \eqref{eqnE}, the compactified Hamiltonian can be written in the form
	\begin{equation}\label{tildeE}
		\widetilde E(\xi,\eta)=r f(\xi,\eta)-g(\xi,\eta),
	\end{equation}
	where
	\begin{equation}\label{fg_def}
	\begin{split}
	f(\xi,\eta)=&\frac{1+\xi_0}{8}\Big(|Y+q_*|^2+1\Big)^2
	+\mu(\xi_2\eta_1-\xi_1\eta_2-k)-a\xi_2\\
	&-\mu\frac{\sqrt2\,Y_1-\frac12|Y|^2}{D_2(Y)},\\[0.4em]
	g(\xi,\eta)=&\frac{(\sqrt2-2)Y_1-\frac12|Y|^2+2\sqrt2-2}{D_*(Y)}.
	\end{split}
	\end{equation}
	The formula is smooth wherever $r>0$ and the displayed denominators are nonzero;
	both conditions hold in a neighbourhood of the regularized level considered below.
	Although the factor $r=|\eta|$ is not smooth on the zero section, this causes no
	problem because the level $\widetilde E=0$ is disjoint from the zero section.  Indeed,
	for $\eta=0$ one has $Y=0$ and hence
	\[
		g(\xi,0)=\frac{2\sqrt2-2}{|q_* -\bar q_*|}
		=\frac{2\sqrt2-2}{2\sqrt2}=1-\frac{\sqrt2}{2}>0,
	\]
	so $\widetilde E(\xi,0)=-g(\xi,0)<0$.

	Let us identify the compactified collision set.  Along a collision orbit with finite
	energy, $|p|=|x|\to\infty$ by the same argument as in the Kepler case;
	hence $\xi\to N:=(1,0,0)$.  At $\xi=N$ we have $\mu=0$, $Y=0$, and
	\begin{equation}\label{fg_collision_values}
		f_*:=f(N,\eta)=\frac{(|q_*|^2+1)^2}{4}=6-4\sqrt2,
		\qquad
		g_*:=g(N,\eta)=1-\frac{\sqrt2}{2}.
	\end{equation}
	Therefore $\widetilde E=0$ over the compactified collision precisely when
	\begin{equation}\label{collision_circle_m1}
		\xi=N,
		\qquad \eta_0=0,
		\qquad \eta_1^2+\eta_2^2=r_*^2,
		\qquad
		r_*:=\frac{g_*}{f_*}=\frac{2+\sqrt2}{4}.
	\end{equation}
	This is a circle in the cotangent fibre over $N$.

	We denote by $W_c^{m_1}$ the connected component of $\{\widetilde E\leq0\}$
	which contains the zero section, and put
	$\widetilde{\Sigma}_c^{m_1}:=\partial W_c^{m_1}$.

	\begin{theorem}\label{rp3}
	For all $c<H(L_1)=-1$, the regularized level
	$\widetilde{\Sigma}_{c}^{m_1}$ is a fibrewise star-shaped hypersurface in
	$T^*\mathbb{S}^2$.  In particular it is of contact type and is diffeomorphic to
	$S^*\mathbb{S}^2\cong \mathbb{RP}^3$.  The same conclusion holds for
	$\widetilde{\Sigma}_{c}^{m_2}$.
	\end{theorem}
	\begin{proof}
	We first prove transversality of the radial Liouville vector field
	\[
		X=\eta\partial_\eta
	\]
	to the hypersurface $\widetilde{\Sigma}_c^{m_1}$.

	Near the compactified collision circle \eqref{collision_circle_m1}, the functions
	$f$ and $g$ are smooth and $r$ is bounded away from zero.  Moreover, at every point
	of the collision circle the radial derivatives satisfy
	\[
		X(f)=0,		\qquad X(g)=0.
	\]
	Indeed, all $\eta$-dependent terms in \eqref{fg_def} either contain the factor
	$\mu$, the factor $(\xi_1,\xi_2)$, or the vector $Y$, and all of these vanish at
	$\xi=N$, $Y=0$.  Hence, using \eqref{tildeE},
	\begin{equation}\label{radial_at_collision}
				X(\widetilde E)=X(rf-g)=rf+rX(f)-X(g)=rf=r_*f_*=g_*>0
	\end{equation}
	on the compactified collision circle.  By compactness, $X(\widetilde E)>0$ on
	$\widetilde{\Sigma}_c^{m_1}$ in a neighbourhood of that circle.

	Away from the compactified collision circle we are in the ordinary coordinates
	$(x,y)$.  The radial vector field $X=\eta\partial_\eta$ corresponds to
	$y\partial_y$, hence, on the level $E=0$ and for $y\neq0$,
	\begin{equation}\label{radial_outside_collision}
		y\partial_y E
		=y\partial_y\big(|y|(H-k)\big)
		=|y|\,(q-q_*)\partial_qH.
	\end{equation}
	The last factor is strictly positive on $\Sigma_c^{m_1}$ by Theorem~\ref{thm: principal}.
	Therefore $X(\widetilde E)>0$ on $\widetilde{\Sigma}_c^{m_1}$.

	It remains to justify the fibrewise star-shaped statement, not merely radial
	transversality.  Fix $\xi\in \mathbb{S}^2$ and a unit covector $\nu\in T^*_{\xi}\mathbb{S}^2$, and put
	$h_{\xi,\nu}(t):=\widetilde E(\xi,t\nu)$ for $t\geq0$.  Since
	\[
		h_{\xi,\nu}(0)=-\left(1-\frac{\sqrt2}{2}\right)<0,
	\]
	the ray starts in the zero-section component $W_c^{m_1}$.  The ray must leave this
	component.  Indeed, away from the compactified collision circle, the condition
	$\widetilde E\leq0$ is equivalent to $H\leq c$ in the original coordinates; hence,
	by Lemma~\ref{lambda}, every point of $W_c^{m_1}$ with $Y\neq0$ projects to
	the Hill component contained in $|Y|<a$.  If $\xi\neq N$, then
	$|Y(\xi,t\nu)|=t|Y(\xi,\nu)|\to\infty$, so the ray cannot stay in $W_c^{m_1}$ for all
	$t$.  If $\xi=N$, then $Y=0$ along the ray and
	$h_{N,\nu}(t)=t f_*-g_*$, so the ray leaves at $t=r_*$.  Thus there is a first exit
	time
	\[
		\tau(\xi,\nu):=\inf\{t>0\mid (\xi,t\nu)\notin W_c^{m_1}\}>0.
	\]
	At this first exit point, $(\xi,\tau\nu)\in\widetilde{\Sigma}_c^{m_1}$, and
	\[
		h_{\xi,\nu}'(\tau)=\tau^{-1}X(\widetilde E)(\xi,\tau\nu)>0.
	\]
	Thus the exit is transverse.

	We now check that this is the only intersection of the ray with the boundary of the
	zero-section component.  At any boundary point $(\xi,t_0\nu)\in\partial W_c^{m_1}$,
	$t_0>0$, the same computation gives
	$h'_{\xi,\nu}(t_0)=t_0^{-1}X(\widetilde E)(\xi,t_0\nu)>0$.  Hence the zero of
	$h_{\xi,\nu}$ is crossed from the negative side to the positive side.  Consequently
	a ray can leave $W_c^{m_1}$ through the boundary, but it cannot enter it through the
	boundary: an inward crossing would require the sign of $h_{\xi,\nu}$ to change from
	positive to negative, contradicting $h'_{\xi,\nu}(t_0)>0$.  Therefore
	\[
		W_c^{m_1}\cap\{(\xi,t\nu):t\geq0\}=\{(\xi,t\nu):0\leq t\leq\tau(\xi,\nu)\}.
	\]
	The implicit function theorem now shows that $\tau(\xi,\nu)$ depends smoothly on
	$(\xi,\nu)$.  Consequently the radial projection
	\[
		\widetilde{\Sigma}_{c}^{m_1}\longrightarrow S^*\mathbb{S}^2,
		\qquad
		(\xi,\eta)\longmapsto \left(\xi,\frac{\eta}{|\eta|}\right),
	\]
	is a diffeomorphism ($S^*\mathbb{S}^2$ is the unit circle cotangent bundle).  Consequently
	$\widetilde{\Sigma}_{c}^{m_1}\cong S^*\mathbb{S}^2\cong \mathbb{R}P^3$, and the restriction
	of the standard cotangent Liouville form is a contact form.  The argument for the
	component around $m_2$ is identical by symmetry.
	\end{proof}

	\begin{proposition}\label{th: tight on RP3}
	For all $c<H(L_1)=-1$, the contact structure on
	$\widetilde{\Sigma}_{c}^{m_1}$ is tight and is isotopic to the unique tight contact
	structure on $\mathbb{R}P^3$.  The same holds for
	$\widetilde{\Sigma}_{c}^{m_2}$.
	\end{proposition}
	\begin{proof}
	By the fibrewise star-shaped description in Theorem~\ref{rp3}, the compact domain
	$W_c^{m_1}$ is diffeomorphic to the unit disk cotangent bundle $D^*\mathbb{S}^2$, and the radial
	Liouville vector field $\eta\partial_\eta$ points outward along its boundary
	$\widetilde{\Sigma}_{c}^{m_1}$.  Therefore $W_c^{m_1}$ is an exact symplectic filling
	of $\widetilde{\Sigma}_{c}^{m_1}$.  In particular the induced contact structure is
	tight.  Since $\widetilde{\Sigma}_{c}^{m_1}\cong\mathbb{R}P^3$ and
	$\mathbb{R}P^3$ carries a unique tight contact structure up to isotopy
	(Theorem~\ref{thm: Eliashberg}), the induced contact structure is isotopic to that
	unique tight structure.  The proof for $m_2$ is the same.
	\end{proof}

The following Lemma provides the uniform regularization near the collision circles.
	\begin{lemma}\label{lem:uniform-collision-regularization}
	There are neighbourhoods of the two compactified collision circles and a number
	$\varepsilon_{\mathrm{col}}>0$ such that, for every
	$k\in(-1-\varepsilon_{\mathrm{col}},-1+\varepsilon_{\mathrm{col}})$, the Moser
	compactification used above gives a smooth regularized level near each collision
	circle, and the fibrewise radial Liouville vector field $\eta\partial_\eta$ is
	transverse to that level.
	\end{lemma}
	\begin{proof}
	We give the argument for the $m_1$-collision; the other one is identical by
	symmetry.  In the formulae \eqref{tildeE}--\eqref{fg_def}, the energy parameter
	$k$ occurs only in the term $-\mu k$ in $f$.  On the compactified collision circle
	one has $\mu=0$, $Y=0$, and $\xi=N$.  Therefore the values $f_*$, $g_*$, the radius
	$r_*=g_*/f_*$, and the identity
	\[
		X(\widetilde E)=g_*>0
	\]
	on the collision circle are independent of $k$.  Since the circle is compact and all
	the displayed functions are smooth in a neighbourhood where $r$ is bounded away from
	zero, the inequality $X(\widetilde E)>0$ persists on a fixed neighbourhood of the
	collision circle for all $k$ in a sufficiently small interval around $H(L_1)=-1$.  The same
	argument also keeps $\widetilde E^{-1}(0)$ disjoint from the zero section, because
	$\widetilde E(\xi,0)=-(1-\sqrt2/2)<0$ is independent of $k$.  This proves the
	uniform local regularization statement.
	\end{proof}

	\section{Connected Sum}
	\label{sec:connected-sum}

	Let
	\[
		c_*:=H(L_1)=-1,
		\qquad h:=H-c_*=H+1,
	\]
	and write
	\[
		a_0:=\sqrt2-1,
		\qquad q_{m_1}=(-a_0,0),\qquad q_{m_2}=(a_0,0).
	\]
	The point $L_1$ is the critical point over $q=0$, $p=0$.  In this section we
	show that, when the energy is raised slightly above $c_*$, the two subcritical
	regularized components are joined by a single Weinstein one-handle.  Therefore the
	regularized upper level is the contact connected sum of the two subcritical
	regularized components.

	The figures in this section are schematic and are included only to indicate the
	local topology of the crossing and the contact connected-sum construction; the
	proofs below use the explicit quadratic model and the Weinstein handle argument.
	Figure~\ref{fig:section6-hill-neck} shows the configuration-space picture behind
	the transition.

	\begin{figure}[H]
	\centering
	\begin{tikzpicture}[scale=1.0, every node/.style={font=\footnotesize}]
		\begin{scope}[xshift=-3.6cm]
			\node at (0,1.65) {$c<c_*$};
			\fill[blue!8] (-1.35,0) ellipse (0.95 and 0.65);
			\fill[blue!8] ( 1.35,0) ellipse (0.95 and 0.65);
			\draw[blue!60!black,thick] (-1.35,0) ellipse (0.95 and 0.65);
			\draw[blue!60!black,thick] ( 1.35,0) ellipse (0.95 and 0.65);
			\fill (-1.35,0) circle (1.5pt) node[below=3pt] {$m_1$};
			\fill ( 1.35,0) circle (1.5pt) node[below=3pt] {$m_2$};
			\fill (0,0) circle (1.2pt) node[above=3pt] {$L_1$};
			\draw[dashed] (0,-0.9)--(0,0.9);
			\node[align=center] at (0,-1.25) {two separate\\subcritical components};
		\end{scope}
		\draw[-{Stealth[length=2.2mm]},thick] (-0.85,1.45)--(0.85,1.45)
			node[midway,above] {cross $L_1$};
		\begin{scope}[xshift=3.6cm]
			\node at (0,1.65) {$c_*<c<c_*+\varepsilon$};
			\fill[blue!8]
				(-2.25,0)
				.. controls (-2.10,0.90) and (-1.05,1.02) .. (-0.55,0.35)
				.. controls (-0.25,0.16) and (0.25,0.16) .. (0.55,0.35)
				.. controls (1.05,1.02) and (2.10,0.90) .. (2.25,0)
				.. controls (2.10,-0.90) and (1.05,-1.02) .. (0.55,-0.35)
				.. controls (0.25,-0.16) and (-0.25,-0.16) .. (-0.55,-0.35)
				.. controls (-1.05,-1.02) and (-2.10,-0.90) .. cycle;
			\draw[blue!60!black,thick]
				(-2.25,0)
				.. controls (-2.10,0.90) and (-1.05,1.02) .. (-0.55,0.35)
				.. controls (-0.25,0.16) and (0.25,0.16) .. (0.55,0.35)
				.. controls (1.05,1.02) and (2.10,0.90) .. (2.25,0)
				.. controls (2.10,-0.90) and (1.05,-1.02) .. (0.55,-0.35)
				.. controls (0.25,-0.16) and (-0.25,-0.16) .. (-0.55,-0.35)
				.. controls (-1.05,-1.02) and (-2.10,-0.90) .. cycle;
			\fill (-1.35,0) circle (1.5pt) node[below=3pt] {$m_1$};
			\fill ( 1.35,0) circle (1.5pt) node[below=3pt] {$m_2$};
			\fill (0,0) circle (1.2pt) node[above=3pt] {$L_1$};
			\draw[red!70!black,thick] (-0.48,0)--(0.48,0);
			\node[red!70!black] at (0,-0.55) {neck};
			\node[align=center] at (0,-1.25) {one component after\\one-handle attachment};
		\end{scope}
	\end{tikzpicture}
	\caption{Schematic configuration projection of the transition at the first critical
	value.  Below $c_*$ the two low-energy components are separated.  Slightly above
	$c_*$ a neck opens through the critical point $L_1$.  The actual proof is carried
	out in phase space using the quadratic normal form near $L_1$.}
	\label{fig:section6-hill-neck}
	\end{figure}

	We use the convention
	\[
		\omega=dp_1\wedge dq_1+dp_2\wedge dq_2.
	\]
	With this convention the Liouville vector fields used in Theorem~\ref{thm: principal}
	on the two subcritical components are
	\[
		X_i=(q-q_{m_i})\partial_q,
		\qquad i=1,2,
	\]
	and their primitives are
	\begin{equation}\label{eq:outer-primitives}
		\alpha_{m_1}:=i_{X_1}\omega=-(q_1+a_0)dp_1-q_2dp_2,
		\qquad
		\alpha_{m_2}:=i_{X_2}\omega=-(q_1-a_0)dp_1-q_2dp_2.
	\end{equation}
	The signs of the constant terms in~\eqref{eq:outer-primitives} will be important
	below.

	\subsection*{The exterior pieces}

	We first record the part of the above-critical argument which does not involve the
	critical point.

	\begin{lemma}\label{lem:outer-persistence}
	There are numbers $\delta>0$ and $\varepsilon_0>0$ such that, for every
	$c\in(c_* ,c_*+\varepsilon_0)$, the portion of the energy level $H^{-1}(c)$ outside
	$B_\delta(L_1)$ consists of two pieces, one adjacent to the old $m_1$-component and
	one adjacent to the old $m_2$-component, and
	\[
		dH(X_1)>0
		\text{ on the piece adjacent to }m_1,
		\quad
		dH(X_2)>0
		\text{ on the piece adjacent to }m_2.
	\]
	\end{lemma}
    
\begin{proof} At the critical value $c_*$, Theorem~\ref{thm: principal} proves a strict
	inequality on the punctured Hill regions around the two primaries.  On every compact
	subset which stays away from $L_1$ and from the collision points this gives a
	positive lower bound for $dH(X_i)$.  In the collision charts, the explicit
	near-collision estimate of Lemma~\ref{lem:rho-nc-explicit} gives a positive margin,
	and the regularized collision circle is disjoint from $L_1$.  Hence, after choosing
	$\delta>0$ small and then shrinking $\varepsilon_0$, these strict inequalities
	persist by continuity for all nearby levels $c\in(c_* ,c_*+\varepsilon_0)$ outside
	$B_\delta(L_1)$.

	Finally, $L_1$ is the only critical point on a sufficiently small neighbourhood of
	the level $H=c_*$.  Therefore, after shrinking $\varepsilon_0$ once more, no new
	exterior component can appear in
	$H^{-1}((c_* ,c_*+\varepsilon_0))\setminus B_\delta(L_1)$.  The two exterior pieces
	are exactly the continuations of the old $m_1$- and $m_2$-pieces.
    
	\end{proof}

	\subsection*{The quadratic model at $L_1$}

	The  connected-sum argument uses the  quadratic expansion at
	$L_1$.  The following lemma gives the expansion and the local topology of the
	critical-level crossing.

	\begin{lemma}\label{lem:corrected-quadratic-L1}
	In the coordinates $(q_1,q_2,p_1,p_2)$ centered at $L_1$, one has
	\begin{equation}\label{eq:U-corrected-expansion}
		U(q)=-1-10q_1^2+2q_2^2+O(|q|^3),
	\end{equation}
	and
	\begin{equation}\label{eq:K-corrected-expansion}
		K(q,p)=2q_1^2+2q_2^2+\frac18p_1^2+\frac18p_2^2+p_1q_2-p_2q_1+O(|(q,p)|^3).
	\end{equation}
	Consequently
	\begin{equation}\label{eq:h-QR-corrected}
		h(q,p)=Q(q,p)+R(q,p),
		\qquad R(q,p)=O(|(q,p)|^3),
	\end{equation}
	where
	\begin{equation}\label{eq:Q-corrected-scalar}
		Q=-8q_1^2+4q_2^2+\frac18p_1^2+\frac18p_2^2+p_1q_2-p_2q_1.
	\end{equation}
	Equivalently, if $z=(q_1,q_2,p_1,p_2)^T$, then $Q=z^TMz$ with
	\begin{equation}\label{eq:Q-corrected-matrix}
		M=
		\begin{pmatrix}
			-8 & 0 & 0 & -1/2 \\
			0 & 4 & 1/2 & 0 \\
			0 & 1/2 & 1/8 & 0 \\
			-1/2 & 0 & 0 & 1/8
		\end{pmatrix}.
	\end{equation}
	The quadratic form $Q$ has Morse index one.
	\end{lemma}
	\begin{proof}
	From the explicit formula~\eqref{Uqp} for the effective potential one obtains
	\[
		U(0)=-1,
		\qquad DU(0)=0,
		\qquad D^2U(0)=
		\begin{pmatrix}
			-20&0\\ 0&4
		\end{pmatrix}.
	\]
	This gives~\eqref{eq:U-corrected-expansion}.  For the kinetic term, use
	\[
		K=\frac18\left((|q|^2+1)p_1+\frac{4q_2}{|q|^2+1}\right)^2
		 +\frac18\left((|q|^2+1)p_2-\frac{4q_1}{|q|^2+1}\right)^2.
	\]
	Up to terms of order at least three,
	\[
		(|q|^2+1)p_1+\frac{4q_2}{|q|^2+1}=p_1+4q_2+O_3,
		\qquad
		(|q|^2+1)p_2-\frac{4q_1}{|q|^2+1}=p_2-4q_1+O_3,
	\]
	and hence~\eqref{eq:K-corrected-expansion} follows.  Combining the quadratic parts
	of $U+1$ and $K$ gives~\eqref{eq:Q-corrected-scalar} and
	\eqref{eq:Q-corrected-matrix}.

	To see the index and to identify the neck coordinate, put
	\[
		\ell:=q_1+\frac18p_2.
	\]
	Substituting $p_2=8(\ell-q_1)$ in~\eqref{eq:Q-corrected-scalar} gives the exact
	completion of squares
	\begin{equation}\label{eq:Q-completed-square-corrected}
		Q=2\left(\frac{p_1}{4}+q_2\right)^2+2q_2^2
		  +8\left(q_1-\frac32\ell\right)^2-10\ell^2.
	\end{equation}
	Thus $Q$ is the sum of three positive squares and one negative square.  Its Morse
	index is therefore one.
	\end{proof}

	For later reference we spell out the local topology of the crossing.  For fixed
	$\ell=\delta$, the intersection of the quadratic level $Q=e$ with the affine
	hyperplane $\ell=\delta$ is
	\begin{equation}\label{eq:corrected-ellipsoid}
		\left(\frac{p_1}{4}+q_2\right)^2+q_2^2
		+4\left(q_1-\frac32\delta\right)^2
		=5\delta^2+\frac{e}{2}.
	\end{equation}
	For $e=0$ this ellipsoid collapses to the single point $0$ when $\delta=0$.  For
	$e>0$, the section $\ell=0$ is a two-sphere.  Equivalently, after the linear change
	of variables suggested by~\eqref{eq:Q-completed-square-corrected}, the quadratic
	model is
	\[
		Q=v_1^2+v_2^2+v_3^2-u^2,
	\]
	with $u=\sqrt{10}\,\ell$.  Passing from $Q=-e$ to $Q=e$ is therefore the standard
	attachment of a topological one-handle $D^1\times D^3$.  In this purely
	Morse-theoretic quadratic model, the negative line of the quadratic form is
	\[
		\{(q_1,q_2,p_1,p_2)=(\tfrac32\ell,0,0,-4\ell):\ell\in\mathbb R\}.
	\]
	Thus the line itself is not the $q_1$-axis in phase space, but its projection to
	the configuration plane is the $q_1$-axis.  It identifies the two local sides of
	the level crossing: the side $\ell<0$ projects toward the $m_1$-component and
	the side $\ell>0$ projects toward the $m_2$-component.  The Weinstein attaching
	sphere for the chosen Liouville field will be described below using the stable
	manifold of that Liouville field.  By the Morse lemma, the same topological
	one-handle description holds for $h=Q+R$ after shrinking the neighbourhood of
	$L_1$ and taking $0<e\ll1$.

	Figure~\ref{fig:quadratic-handle-profile} illustrates the preceding calculation in
	the reduced profile coordinates.  If
	$r=(v_1^2+v_2^2+v_3^2)^{1/2}$, then the quadratic model is
	$r^2-u^2=e$.  For $e<0$ the lower level has two local components; for $e>0$ these
	components are joined by a neck.

	\begin{figure}[H]
	\centering
	\begin{tikzpicture}[scale=1.0, every node/.style={font=\footnotesize}]
		\begin{scope}[xshift=-3.4cm]
			\node at (0,2.3) {$Q=-e$};
			\draw[->] (-2.1,0)--(2.1,0) node[right] {$u$};
			\draw[->] (0,-1.65)--(0,1.65) node[above] {$r$};
			\draw[blue!60!black,thick,domain=-2:-0.72,samples=80]
				plot (\x,{sqrt(\x*\x-0.49)});
			\draw[blue!60!black,thick,domain=-2:-0.72,samples=80]
				plot (\x,{-sqrt(\x*\x-0.49)});
			\draw[blue!60!black,thick,domain=0.72:2,samples=80]
				plot (\x,{sqrt(\x*\x-0.49)});
			\draw[blue!60!black,thick,domain=0.72:2,samples=80]
				plot (\x,{-sqrt(\x*\x-0.49)});
			\fill (-0.70,0) circle (1.2pt) node[below=3pt] {$m_1$ side};
			\fill (0.70,0) circle (1.2pt) node[below=3pt] {$m_2$ side};
			\node[align=center] at (0,-2.0) {two feet of the\\index-one handle};
		\end{scope}
		\draw[-{Stealth[length=2.2mm]},thick] (-0.85,1.55)--(0.85,1.55)
			node[midway,above] {increase energy};
		\begin{scope}[xshift=3.4cm]
			\node at (0,2.3) {$Q=+e$};
			\draw[->] (-2.1,0)--(2.1,0) node[right] {$u$};
			\draw[->] (0,-1.65)--(0,1.65) node[above] {$r$};
			\draw[blue!60!black,thick,domain=-2:2,samples=100]
				plot (\x,{sqrt(\x*\x+0.49)});
			\draw[blue!60!black,thick,domain=-2:2,samples=100]
				plot (\x,{-sqrt(\x*\x+0.49)});
			\draw[red!70!black,thick] (0,-0.70)--(0,0.70);
			\node[red!70!black,right] at (0,0.3) {neck};
			\node[align=center] at (0,-2.0) {local model of\\$D^1\times D^3$};
		\end{scope}
	\end{tikzpicture}
	\caption{Profile of the quadratic normal form
	$Q=v_1^2+v_2^2+v_3^2-u^2$.  The displayed curves represent the two-dimensional
	profile $r^2-u^2=\pm e$ with $r=(v_1^2+v_2^2+v_3^2)^{1/2}$.  The full
	four-dimensional crossing attaches a Weinstein one-handle.}
	\label{fig:quadratic-handle-profile}
	\end{figure}

	\subsection*{A local Liouville field}

	We now choose a local Liouville vector field which is adapted to the corrected
	quadratic form.  For real numbers $a,b$, set
	\[
		Y_{a,b}=a q_1\partial_{q_1}+b q_2\partial_{q_2}
		 +(1-a)p_1\partial_{p_1}+(1-b)p_2\partial_{p_2}.
	\]
	A direct computation gives $\mathcal L_{Y_{a,b}}\omega=\omega$.  We shall use
	\begin{equation}\label{eq:local-Y}
		Y:=Y_{-1/2,1/2}
		=-\frac12q_1\partial_{q_1}+\frac12q_2\partial_{q_2}
		 +\frac32p_1\partial_{p_1}+\frac12p_2\partial_{p_2}.
	\end{equation}

\begin{remark}[Hyperbolic structure of $Y$ at $L_1$, see also below  the paragraph Eigenvalue profile of $Y$ and the canonical model]\label{rem:Y-eigenvalues}
At $L_1$ the linearisation $DY$ is diagonal in the basis
$(\partial_{q_1},\partial_{p_1},\partial_{q_2},\partial_{p_2})$ with eigenvalues
$\bigl(-\tfrac12,\tfrac32,\tfrac12,\tfrac12\bigr)$.  These eigenvalues sum to
$1$ on each symplectic pair, $-\tfrac12+\tfrac32=1$ and $\tfrac12+\tfrac12=1$,
recording the linearised Liouville condition $\mathcal L_Y\omega=\omega$ pair
by pair.  Their signs --- one negative, three positive --- match the Morse
index of $h$ at $L_1$, so $L_1$ is a non-degenerate hyperbolic zero of the
Liouville field $Y$ in the Weinstein sense (Cieliebak--Eliashberg
\cite[\S 11.1--11.2]{CieliebakEliashberg}), with one-dimensional isotropic
stable manifold $\{(q_1,0,0,0)\}$.  The eigenvalues are not in the symmetric
model form $(-1,2,\tfrac12,\tfrac12)$ usually taken as the canonical local
profile of an index-one Weinstein critical point, but this asymmetry is
immaterial: the Weinstein handle attached at $L_1$ depends only on the
Liouville condition, the Morse index of $h$, and the isotropic character of
the stable manifold of $Y$.  A standard symplectic deformation supported in a
neighbourhood of $L_1$ would normalise the eigenvalues to canonical form, and
we do not need to perform it.
\end{remark}
    
	Its primitive is
	\begin{equation}\label{eq:alpha-local-Y}
		\alpha_1:=i_Y\omega
		=\frac12q_1dp_1-\frac12q_2dp_2+\frac32p_1dq_1+\frac12p_2dq_2.
	\end{equation}
	The differences between this local primitive and the two exterior primitives are
	exact:
	\begin{align}
		\alpha_1-\alpha_{m_1}
		&=d\left(\frac32q_1p_1+a_0p_1+\frac12q_2p_2\right),\label{eq:exact-difference-left}\\
		\alpha_1-\alpha_{m_2}
		&=d\left(\frac32q_1p_1-a_0p_1+\frac12q_2p_2\right).\label{eq:exact-difference-right}
	\end{align}

The following Lemma provides local transversality near $L_1$:
	\begin{lemma}\label{lem:local-transversality-L1}
	After possibly shrinking the neighbourhood of $L_1$, the vector field $Y$ satisfies
	\[
		dH(Y)=dh(Y)>0
	\]
	at every point of the punctured neighbourhood.  In particular, $Y$ is transverse to
	all nearby regular levels $H^{-1}(c)$, $c\ne c_*$, in that neighbourhood.
	\end{lemma}
	\begin{proof}
	Using~\eqref{eq:Q-corrected-scalar} and~\eqref{eq:local-Y}, one obtains
	\begin{align*}
		dQ(Y)
		&=8q_1^2+4q_2^2+\frac38p_1^2+\frac18p_2^2+2p_1q_2 \\
		&=8q_1^2+4\left(q_2+\frac14p_1\right)^2+\frac18p_1^2+\frac18p_2^2.
	\end{align*}
	This is positive definite.  Hence there is a constant $\kappa>0$ such that
	$dQ(Y)\ge \kappa |(q,p)|^2$.  Since $R=O(|(q,p)|^3)$ and $Y$ is linear, we have
	$dR(Y)=O(|(q,p)|^3)$.  Shrinking the neighbourhood so that
	$|dR(Y)|\le \frac12\kappa |(q,p)|^2$ gives
	\[
		dH(Y)=dh(Y)=dQ(Y)+dR(Y)
		\ge \frac12\kappa |(q,p)|^2>0
	\]
	away from $L_1$.
	\end{proof}

\begin{remark}[Stable line of $Y$ versus negative line of $Q$]
The Liouville field $Y$ defined in~\eqref{eq:local-Y} has eigenvalues
$-\tfrac12,\tfrac12,\tfrac32,\tfrac12$ along the basis vectors
$\partial_{q_1},\partial_{q_2},\partial_{p_1},\partial_{p_2}$, so its stable
manifold at $L_1$ is the $q_1$-axis $\{(q_1,0,0,0)\}$.  This is \emph{not} the
negative line $\{(\tfrac32\ell,0,0,-4\ell)\}$ of the quadratic form $Q$ identified
in Lemma~\ref{lem:corrected-quadratic-L1}.  The two lines play different roles.
The negative line of $Q$ records the Morse index and the local topology of the
level crossing, while the stable manifold of the chosen Liouville field $Y$
determines the isotropic attaching sphere in the Weinstein handle.  Both lines
project to the $q_1$-axis in the configuration plane, which is why the same
$q_1$-axis appears in the geometric picture of the handle.
\end{remark}

	\subsection*{The Weinstein one-handle}

	We recall the precise form of the standard handle theorem needed below.  A
	Weinstein cobordism is an exact symplectic cobordism $(W,d\lambda)$ together with a
	Morse function $\phi:W\to\mathbb R$ such that the Liouville vector field $Z$, defined
	by $i_Zd\lambda=\lambda$, is gradient-like for $\phi$; equivalently
	$d\phi(Z)>0$ away from the critical points and $Z$ has the standard hyperbolic
	linear form at the critical points.  If $\phi$ has an index-$k$ critical point, then
	crossing its critical value attaches a Weinstein $k$-handle.  In dimension four an
	index-one handle is attached along an isotropic $S^0$, i.e. along two points, in the
	lower contact boundary.  If those two points lie on different connected components
	of the lower boundary, the upper contact boundary is the contact connected sum of
	those components.  This is the contact handle construction recalled in
	Section~2.1; see also~\cite{Geiges,colin}.

	We shall use the following exact-gluing form of this theorem.  Suppose
	\begin{enumerate}
	    \item[\textup{(H1)}] a neighbourhood of an index-one critical point
	        carries a primitive $\lambda_{\rm loc}$ whose Liouville field is
	        gradient-like for the Morse function;
	    \item[\textup{(H2)}] the two exterior pieces carry primitives
	        $\lambda_1,\lambda_2$ whose Liouville fields are outward
	        transverse to the corresponding regular levels;
	    \item[\textup{(H3)}] on collar overlaps, the primitive differences
	        $\lambda_{\rm loc}-\lambda_i$ are exact.
	\end{enumerate}
	Then, after shrinking collars and applying the Moser-type adjustment described
	below, the primitives can be made to agree on the overlaps, and gluing yields a
	single exact cobordism.  The resulting Liouville field agrees with the local one
	near the critical point and with the exterior ones away from the collars, and
	the handle attached at the critical point is precisely the Weinstein handle
	determined by the local model.  The point of this formulation is that one does
	not need to interpolate the Liouville vector fields by an \emph{ad hoc} cut-off
	and then estimate the cut-off error; the exactness condition~(H3) is the
	compatibility data used in Weinstein handle attachment.
    For a systematic exposition of this Moser-type gluing in the Weinstein category
we refer to Cieliebak--Eliashberg \cite[Ch.~11]{CieliebakEliashberg}, with
related discussions in Geiges \cite[Ch.~6]{Geiges} and in the original
construction of Weinstein \cite{WeinsteinSurgery}.  In the present problem the
\emph{quantitative} positivity margins for $dh(Y)$ and $dh(X_i)$ supplied by
Lemmas~\ref{lem:local-transversality-L1} and~\ref{lem:outer-persistence} are
what make the construction concretely feasible: they let the collars be chosen small enough that the Moser-interpolated Liouville field remains gradient-like on the whole gluing region; the
construction is via a cutoff applied to the (exact) primitive
\emph{difference}, not to the Liouville fields themselves, so the
resulting one-form is everywhere an exact primitive of $\omega$ and no
approximation of the symplectic structure is introduced. The explicit
adjustment is given below.

\paragraph{Explicit Moser collar adjustment.}
We make the construction realising hypothesis~(H3) explicit. The cutoff
below is applied to the exact primitive \emph{difference} $\phi_i$ of
\eqref{eq:exact-difference-left}--\eqref{eq:exact-difference-right};
the resulting one-form $\widehat\alpha$ is therefore globally an exact
primitive of $\omega$ (no approximation), and the only quantitative
estimate that enters is the standard small-collar argument that keeps
the associated Liouville field $\widehat Z$ gradient-like for $h$. By
\eqref{eq:exact-difference-left}--\eqref{eq:exact-difference-right}
the primitive differences are $\alpha_1-\alpha_{m_i}=d\phi_i$ with
\[
    \phi_1(q,p)=\tfrac32 q_1p_1+a_0p_1+\tfrac12 q_2p_2,
    \qquad
    \phi_2(q,p)=\tfrac32 q_1p_1-a_0p_1+\tfrac12 q_2p_2.
\]
Pick a small ball $U_0$ around $L_1$ on which
Lemma~\ref{lem:local-transversality-L1} holds, two disjoint
bicollars $C_1,C_2\subset U_0$ adjacent to the $m_1$- and
$m_2$-pieces respectively, and smooth cutoffs
$\chi_i:U_0\to[0,1]$ with $\chi_i\equiv 1$ on the inner edge of
$C_i$ (towards $L_1$) and $\chi_i\equiv 0$ on the outer edge
(towards $m_i$). Define the global one-form $\widehat\alpha$
piecewise by
\[
    \widehat\alpha=
    \begin{cases}
        \alpha_1 & \text{near }L_1,\\
        \alpha_{m_i}+d(\chi_i\phi_i) & \text{on }C_i,\\
        \alpha_{m_i} & \text{on the }m_i\text{-piece outside } C_i.
    \end{cases}
\]
Since $d(\chi_i\phi_i)$ is exact, $d\widehat\alpha=\omega$
globally; on the inner edge of $C_i$ the piecewise definitions
match because $\chi_i\equiv 1$ there gives
$\alpha_{m_i}+d\phi_i=\alpha_1$, and on the outer edge they match
because $\chi_i\equiv 0$ gives $\alpha_{m_i}$. The associated
Liouville field $\widehat Z$ defined by
$i_{\widehat Z}\omega=\widehat\alpha$ therefore agrees with $Y$
near $L_1$ and with $X_i$ outside $C_i$ on the $m_i$-piece. On
$C_i$ one has $\widehat Z=X_i+X_{\chi_i\phi_i}$, where
$X_{\chi_i\phi_i}$ is the Hamiltonian vector field of
$\chi_i\phi_i$. Because $\phi_i$ vanishes at $L_1$ and is
$C^\infty$, the bound
$|dh(X_{\chi_i\phi_i})|\le C\,\|\chi_i\|_{C^1}\sup_{C_i}|\phi_i|$
holds with $C$ depending only on $\|h\|_{C^2(U_0)}$, and the
right-hand side can be made smaller than $\tfrac12\mu_i$ by
shrinking $C_i$, where $\mu_i>0$ is the lower bound for $dh(X_i)$
on the support of $\chi_i$ supplied by
Lemma~\ref{lem:outer-persistence}. Combining the two estimates
yields
\[
    dh(\widehat Z)\ge\tfrac12\mu_i>0
    \quad\text{on } C_i,
\]
while $dh(\widehat Z)=dh(Y)>0$ on the punctured neighbourhood of
$L_1$ inside the inner edge of $C_i$ by
Lemma~\ref{lem:local-transversality-L1}. Hence $\widehat Z$ is
gradient-like for $h$ on the entire glued region, with $L_1$ as
its sole critical point, exactly as required by the Weinstein
gluing recalled above.

\paragraph{Eigenvalue profile of $Y$ and the canonical model.}
The eigenvalues of $Y$ at $L_1$ are
$(-\tfrac12,\tfrac32,\tfrac12,\tfrac12)$, summing to $1$ on each
symplectic pair, with one-dimensional and therefore automatically
isotropic stable manifold (the $q_1$-axis). The canonical
Weinstein-handle profile $(-1,2,\tfrac12,\tfrac12)$ has the same
signs and the same pair-sums. Cieliebak--Eliashberg
\cite[\S\,11.2]{CieliebakEliashberg} construct an explicit
isotopy through hyperbolic Liouville germs between any two such
models with matching Morse index and isotropic stable manifold of
the same dimension; the Weinstein handle attached at $L_1$ is
therefore an invariant of the connected component of $Y$ in this
space, independent of eigenvalue magnitudes. We do not perform
the isotopy explicitly because the gluing above only invokes the
three intrinsic data
(i) gradient-likeness $dh(Y)>0$ on the punctured neighbourhood of
$L_1$, supplied by Lemma~\ref{lem:local-transversality-L1};
(ii) the one-dimensional isotropic stable manifold of $Y$, namely
the $q_1$-axis; and
(iii) the exactness identities
\eqref{eq:exact-difference-left}--\eqref{eq:exact-difference-right}.
All three are independent of the eigenvalue magnitudes, and the
construction in the previous paragraph uses only them.

In the present problem hypotheses~(H1)--(H3) are satisfied. The Morse
	function is $h=H-c_*$ and the Morse index is one by
	Lemma~\ref{lem:corrected-quadratic-L1}. The local primitive is
	$\alpha_1$ from~\eqref{eq:alpha-local-Y}; by
	Lemma~\ref{lem:local-transversality-L1} its Liouville field $Y$ is
	gradient-like for $h$ near $L_1$, which gives~(H1). The exterior
	primitives are $\alpha_{m_1},\alpha_{m_2}$
	from~\eqref{eq:outer-primitives}, and Lemma~\ref{lem:outer-persistence}
	says that their Liouville fields $X_1,X_2$ are outward transverse to the
	nearby regular levels, which gives~(H2). Finally,
	\eqref{eq:exact-difference-left} and~\eqref{eq:exact-difference-right}
	display the explicit primitives $\phi_1,\phi_2$ for the differences
	$\alpha_1-\alpha_{m_i}$, so the exactness condition~(H3) holds; the
	Moser collar adjustment realising~(H3) was constructed explicitly above.
	Thus the local Weinstein handle can be glued to the two exterior
	contact-type pieces as an exact Weinstein cobordism.
	Figure~\ref{fig:contact-connected-sum-handle} shows the corresponding operation on
	the contact boundary.

	\begin{figure}[H]
	\centering
	\begin{tikzpicture}[scale=1.0, every node/.style={font=\footnotesize}]
		\begin{scope}[xshift=-3.8cm]
			\node at (0,1.45) {lower boundary};
			\fill[blue!6] (-1.15,0) circle (0.72);
			\fill[blue!6] (1.15,0) circle (0.72);
			\draw[thick,blue!60!black] (-1.15,0) circle (0.72);
			\draw[thick,blue!60!black] (1.15,0) circle (0.72);
			\fill[white] (-0.55,0) circle (0.16);
			\fill[white] (0.55,0) circle (0.16);
			\draw[red!70!black,thick] (-0.55,0) circle (0.16);
			\draw[red!70!black,thick] (0.55,0) circle (0.16);
			\fill[red!70!black] (-0.55,0) circle (1.2pt);
			\fill[red!70!black] (0.55,0) circle (1.2pt);
			\node at (-1.15,-1.0) {$\widetilde\Sigma_{c_-}^{m_1}$};
			\node at (1.15,-1.0) {$\widetilde\Sigma_{c_-}^{m_2}$};
			\node[red!70!black] at (0,-0.45) {attaching $S^0$};
		\end{scope}
		\draw[-{Stealth[length=2.2mm]},thick] (-0.55,0.95)--(0.55,0.95)
			node[midway,above,align=center] {Weinstein\\one-handle};
		\begin{scope}[xshift=3.8cm]
			\node at (0,1.45) {upper boundary};
			\fill[blue!6]
				(-1.80,0)
				.. controls (-1.75,0.82) and (-0.82,0.82) .. (-0.43,0.34)
				.. controls (-0.18,0.12) and (0.18,0.12) .. (0.43,0.34)
				.. controls (0.82,0.82) and (1.75,0.82) .. (1.80,0)
				.. controls (1.75,-0.82) and (0.82,-0.82) .. (0.43,-0.34)
				.. controls (0.18,-0.12) and (-0.18,-0.12) .. (-0.43,-0.34)
				.. controls (-0.82,-0.82) and (-1.75,-0.82) .. cycle;
			\draw[thick,blue!60!black]
				(-1.80,0)
				.. controls (-1.75,0.82) and (-0.82,0.82) .. (-0.43,0.34)
				.. controls (-0.18,0.12) and (0.18,0.12) .. (0.43,0.34)
				.. controls (0.82,0.82) and (1.75,0.82) .. (1.80,0)
				.. controls (1.75,-0.82) and (0.82,-0.82) .. (0.43,-0.34)
				.. controls (0.18,-0.12) and (-0.18,-0.12) .. (-0.43,-0.34)
				.. controls (-0.82,-0.82) and (-1.75,-0.82) .. cycle;
			\draw[red!70!black,thick] (-0.45,0)--(0.45,0);
			\node at (0,-1.0) {$\widetilde\Sigma_{c_-}^{m_1}\#\widetilde\Sigma_{c_-}^{m_2}$};
		\end{scope}
	\end{tikzpicture}
	\caption{Boundary effect of the index-one Weinstein handle.  The handle attaches
	along two Darboux balls centred at the two points of the isotropic attaching sphere
	$S^0$.  Since the points lie on different components, the upper boundary is the
	contact connected sum.}
	\label{fig:contact-connected-sum-handle}
	\end{figure}

	The two attaching points lie on different subcritical components.  For the
	Weinstein handle determined by the Liouville field $Y$, the attaching sphere is
	the intersection of the stable manifold of $Y$ with the lower level.  Since the
	stable manifold is the $q_1$-axis, these two attaching points are, in the
	quadratic model,
	\[
		q_1=\pm\sqrt{e/8},\qquad q_2=p_1=p_2=0,
	\]
	on the lower level $Q=-e$.  The point with $q_1<0$ lies on the side of the old
	$m_1$-component, while the point with $q_1>0$ lies on the side of the old
	$m_2$-component.  The negative line of the quadratic form $Q$ is used above only
	to display the Morse index and the local topology of the crossing.  This
	separation persists for the actual function $h=Q+R$ after shrinking the
	neighbourhood and taking $e>0$ sufficiently small.

	\begin{lemma}\label{lem:contact-handle-crossing}
	Let $c_-=c_*-e$ and $c_+=c_*+e$ with $e>0$ sufficiently small.  After Moser
	regularization at the two collisions, the exact cobordism between the regularized
	levels at $c_-$ and $c_+$ is obtained from the disjoint union of the two subcritical
	pieces by attaching one Weinstein one-handle.  Thus the upper contact boundary is
	the contact connected sum of the two lower contact boundaries.
	\end{lemma}
	\begin{proof}
	Choose pairwise disjoint neighbourhoods of the two collision circles as in
	Lemma~\ref{lem:uniform-collision-regularization}, and choose a neighbourhood
	$U_0$ of $L_1$ disjoint from them on which Lemma~\ref{lem:local-transversality-L1}
	holds.  Shrink $U_0$ if necessary so that the Morse description following
	\eqref{eq:corrected-ellipsoid} is valid in $U_0$.  Then choose $e>0$ so small that
	the part of the cobordism
	\[
		\{c_-\leq H\leq c_+\}
	\]
	which contains the critical point is contained in $U_0$ together with two exterior
	collars connecting it to the old $m_1$- and $m_2$-pieces.  This is possible because
	$L_1$ is the only critical point near the level $H=c_*$.

	On $U_0$, the primitive is $\alpha_1$ and the Liouville field is $Y$; by
	Lemma~\ref{lem:local-transversality-L1} it is gradient-like for $h=H-c_*$.  Thus the
	local passage through $L_1$ is a Weinstein handle of index one.  On the exterior
	collars, the primitives are $\alpha_{m_1}$ and $\alpha_{m_2}$, and the Liouville
	fields are the corresponding $X_i$; by Lemma~\ref{lem:outer-persistence} they are
	outward transverse to the nearby regular levels.  On the overlaps with $U_0$, the
	exact identities \eqref{eq:exact-difference-left} and
	\eqref{eq:exact-difference-right} give the exactness condition required for the
	Weinstein gluing described above.  Hence these pieces glue to an exact symplectic
	cobordism whose Liouville vector field is gradient-like for $H$ and which has a
	single critical point, namely the index-one point $L_1$.

	The handle is disjoint from the collision neighbourhoods.  By
	Lemma~\ref{lem:uniform-collision-regularization}, the Moser compactification near
	each collision is valid uniformly for all energies in a small interval around
	$c_*=-1$.  Therefore the local collision compactifications may be performed before
	or after the handle attachment.  After compactifying the two collision ends, the
	lower boundary is the disjoint union
	$\widetilde\Sigma_{c_-}^{m_1}\sqcup\widetilde\Sigma_{c_-}^{m_2}$, and the upper
	boundary is the regularized level at $c_+$.

	Finally, the attaching sphere is the $S^0$ described above: one point lies on the
	$m_1$-component and the other lies on the $m_2$-component.  Hence the upper contact
	boundary is the contact connected sum of the two lower contact boundaries.
	\end{proof}

	\begin{proposition}\label{prop:above-critical-contact-type}
	There exists $\varepsilon>0$ such that for every
	$c\in(c_* ,c_*+\varepsilon)$ the connected component $\Sigma_c$ of $H^{-1}(c)$
	containing both primaries is of contact type after regularization.  More precisely,
	for every sufficiently close subcritical value $c_-<c_*$, the regularized upper
	level is contactomorphic to
	\[
		\widetilde\Sigma_{c_-}^{m_1}\#\widetilde\Sigma_{c_-}^{m_2}.
	\]
	In particular,
	\[
		\widetilde\Sigma_c\cong \mathbb RP^3\#\mathbb RP^3.
	\]
	\end{proposition}
	\begin{proof}
	Let $c=c_*+e$ with $0<e<\varepsilon$, where $\varepsilon$ is chosen small enough
	for Lemma~\ref{lem:contact-handle-crossing}.  Choose any
	$c_-\in(c_* -\varepsilon,c_*)$.  Since there are no critical values below $c_*$ in
	this small window, the subcritical regularized levels are contactomorphic to one
	another by the Liouville flow.  Section~5 identifies the two subcritical components
	as
	\[
		\widetilde\Sigma_{c_-}^{m_1}\cong\mathbb{RP}^3,
		\qquad
		\widetilde\Sigma_{c_-}^{m_2}\cong\mathbb{RP}^3.
	\]
	By Lemma~\ref{lem:contact-handle-crossing}, crossing $L_1$ attaches exactly one
	Weinstein one-handle connecting these two components.  Thus the regularized level
	at $c$ is their contact connected sum.  The contact form is the restriction of the
	globally glued Liouville primitive supplied by the Weinstein cobordism.
	\end{proof}

	\begin{proposition}\label{prop:above-critical-fillable-tight}
	For $c\in(c_* ,c_*+\varepsilon)$ as in
	Proposition~\ref{prop:above-critical-contact-type}, the contact manifold
	$\widetilde\Sigma_c$ is exactly symplectically fillable.  In particular its contact
	structure is tight.  Moreover, as a contact connected sum it is
	\[
		(\mathbb{RP}^3,\xi_{\mathrm{std}})\#(\mathbb{RP}^3,\xi_{\mathrm{std}}),
	\]
	where $\xi_{\mathrm{std}}$ denotes the unique tight contact structure on
	$\mathbb{RP}^3$.
	\end{proposition}
	\begin{proof}
	For $c_-<c_*$, Proposition~\ref{th: tight on RP3} gives exact fillings of the two
	regularized subcritical components; by Theorem~\ref{rp3} these fillings are
	diffeomorphic to disk cotangent bundles of $\mathbb{S}^2$.  Attaching a Weinstein one-handle
	to the disjoint union of these exact fillings again gives an exact filling.  Its
	boundary is $\widetilde\Sigma_c$ by Proposition~\ref{prop:above-critical-contact-type}.
	Exact fillability implies tightness in dimension three.  Since
	$\mathbb{RP}^3$ has a unique tight contact structure by Theorem~\ref{thm: Eliashberg},
	the two summands are the standard tight copies of $\mathbb{RP}^3$.
	\end{proof}

	\begin{theorem}\label{thm:periodic-orbits-final}
	There exists $\varepsilon>0$ such that, for every regular value $c$ with
	\[
		c<c_*=H(L_1)
		\qquad\text{or}\qquad
		c\in(c_* ,c_*+\varepsilon),
	\]
	the regularized restricted three-body problem on $\mathbb S^2$ considered in this
	paper admits a periodic orbit.
	\end{theorem}
	\begin{proof}
	For $c<c_*$, Theorem~\ref{rp3} and Proposition~\ref{th: tight on RP3} give closed
	regularized contact hypersurfaces $\widetilde\Sigma_c^{m_i}$, $i=1,2$.  For
	$c\in(c_* ,c_*+\varepsilon)$, Proposition~\ref{prop:above-critical-contact-type}
	gives the closed regularized contact hypersurface $\widetilde\Sigma_c$.  In both
	cases the contact form is the restriction of a global Liouville primitive, so the
	contact structure is co-oriented.

	On a contact-type Hamiltonian level, the Reeb vector field of the induced contact
	form is a positive time reparametrization of the Hamiltonian vector field, because
	the corresponding Liouville vector field satisfies $dH(Z)>0$ on the level.  Taubes'
	theorem, Theorem~\ref{thm: taubes}, gives a closed Reeb orbit, and hence a periodic
	orbit of the regularized Hamiltonian flow.
	\end{proof}

	The previous theorem is topological in nature and complements approaches based on
	perturbation theory.

 \section{Conclusions and Open Problems} 
	We have shown that the energy hypersurfaces of the regularized restricted three-body problem on the sphere are of contact type for energies below the first critical value and slightly above it, for the circular relative equilibrium considered here with the radius of motion $a=1/\sqrt{2}$ for the primaries. The positive transversality margins obtained in the proof imply persistence under sufficiently small perturbations in the corresponding nearby energy windows, but a uniform theorem for a full parameter family would require additional control of the constants $\alpha$ and $\beta$. Moreover, for sufficiently small energies the contact condition is independent of the choice of the radius $a\in(0,1)$. However, how much the energy level can be increased is still an open problem. 
It would be also interesting to consider cases where there are more than two Lagrange points for the system under consideration. To start analyzing those situations, one should look at \cite{Andrade}.
	
	This paper sets the stage for a further study that will explore the existence of a global disk-like surface of section for the case examined here.

%
%
%
%
%
%
%

\appendix
\section{Validated numerics: implementation details}
\label{app:numerics}

The proofs of Lemma~13, Theorem~14, and Lemma~20 use a finite collection
of interval-arithmetic inequalities. This appendix gives a self-contained
description of how each of these inequalities is verified by the
ancillary file \texttt{rigorous\_validated\_numerics.py}. The intent is
to make the certificates fully auditable: any reader equipped with
\texttt{python-flint}~\cite{pythonflint} and the source code can
re-execute the verifier and reproduce every numerical certificate quoted
in the body of the paper.

The exposition is organised as follows.
Section~\ref{app:arith} recalls the rigorous interval arithmetic
provided by Arb~\cite{JohanssonArb}.
Section~\ref{app:ad} describes the two interval-jet classes (\texttt{J1}
for univariate Taylor jets up to order two, and \texttt{J2} for
bivariate jets carrying a mixed partial derivative) used to obtain
certified enclosures of derivatives directly from the closed-form
expressions in (3.13) and (3.17) without finite differences.
Section~\ref{app:bnb} describes the adaptive bisection (branch-and-bound)
that drives the verification.
Sections~\ref{app:T1}--\ref{app:T5} go through the five verification
tasks performed by the script and tabulate, for each task, the precise
target inequality, the rectangle (or interval) on which it is checked,
the certified outcome of the 256-bit Arb run, and the number of
accepted boxes and bisections.
Section~\ref{app:summary} summarises the certificates and discusses
precision and reproducibility.

\subsection{Interval ball arithmetic and outward rounding}
\label{app:arith}

Arb represents a real interval as a \emph{ball}, i.e.\ a centre--radius
pair $\langle m,r\rangle$ with $m\in\mathbb F$ a binary floating-point
number at the chosen working precision and $r\in\mathbb F_{\geq 0}$ a
non-negative radius. The associated set is the closed real interval
$[m-r,\, m+r]$. Every Arb operation acts on these balls in such a way
that the output ball provably contains the true mathematical value: in
particular, addition, multiplication, division (away from zero), the
square root (on positive balls), and elementary functions are all
implemented with rigorous \emph{outward rounding} of the centre and
radius. We use Arb only through the operations
$+,-,\cdot,/,\sqrt{\cdot},(\cdot)^{n}$ ($n\in\mathbb Z$) on balls; no
other special function is required.

In what follows we write $[a,b]$ for the Arb ball with lower endpoint
$a$ and upper endpoint $b$. We denote by $\mathrm{lo}(z)$ and
$\mathrm{up}(z)$ the certified lower and upper endpoints of $z$. Two
elementary primitives are used throughout:
\begin{equation*}
\mathsf{lower\_gt}(z,t) \;:=\; \mathrm{lo}(z) > t,
\qquad
\mathsf{upper\_lt}(z,t) \;:=\; \mathrm{up}(z) < t,
\end{equation*}
each guaranteeing the corresponding sharp inequality everywhere in $z$.

\subsection{Interval automatic differentiation}
\label{app:ad}

Several of our inequalities involve partial derivatives of $\widehat U$.
Rather than approximate these by finite differences (which would
introduce truncation error and require a separate rigorous estimate for
that error), we evaluate the closed-form expressions for $\widehat U$,
$\partial_\rho \widehat U$, $\partial_x \widehat U$, and
$\partial_{\rho x}\widehat U$ \emph{simultaneously} on a box, using two
operator-overloaded interval-jet classes.

\paragraph{The class \texttt{J1}: one-variable second-order jets.}
A \texttt{J1} object carries three Arb balls
$(v,d,d_2)$ representing, respectively, $f(\rho)$, $f'(\rho)$ and
$f''(\rho)$ on the input ball $\rho$. The arithmetic obeys the standard
Leibniz rules:
\begin{align*}
(f+g) &\;\sim\; (v_f+v_g,\; d_f+d_g,\; d_{2,f}+d_{2,g}),\\
(f\cdot g) &\;\sim\; (v_f v_g,\; d_f v_g+v_f d_g,\;
d_{2,f}v_g+2 d_f d_g+v_f d_{2,g}),\\
1/f &\;\sim\; \big(1/v_f,\; -d_f/v_f^{2},\;
2d_f^{2}/v_f^{3}-d_{2,f}/v_f^{2}\big),\\
\sqrt{f} &\;\sim\;
\big(y,\; d_f/(2y),\; d_{2,f}/(2y)-d_f^{2}/(4 y^{3})\big),\quad
y:=\sqrt{v_f}.
\end{align*}
On a degenerate ball (i.e.\ at a single floating-point value) the
\texttt{J1} arithmetic reduces to ordinary scalar evaluation; on a
non-degenerate ball $[\rho_{\mathrm{lo}},\rho_{\mathrm{hi}}]$ it returns
\emph{interval} enclosures of the value, the first and the second
derivatives that are simultaneously valid for every
$\rho\in[\rho_{\mathrm{lo}},\rho_{\mathrm{hi}}]$. The class is used in
the boundary check (Section~\ref{app:T1}) and in the
$\theta=0$-slice of the outer-annulus argument (Section~\ref{app:T3}),
where the full second derivative
$\sigma(\rho)=-U_{\rho\rho}(\rho,0)$ is required for the Taylor sliver
estimate at $\rho_0=\sqrt2-1$.

\paragraph{The class \texttt{J2}: two-variable first-order jets with
mixed partial.}
A \texttt{J2} object carries four balls $(v,r,x,r x)$ representing
$F(\rho,x)$, $\partial_\rho F$, $\partial_x F$, and $\partial_{\rho x}F$.
The product rule, quotient rule, and chain rule for the square root are
implemented in the obvious way; for example,
\begin{equation*}
(f\cdot g)\;\sim\;\big(v_f v_g,\; r_f v_g+v_f r_g,\;
x_f v_g+v_f x_g,\;
(r x)_f v_g+r_f x_g+x_f r_g+v_f (r x)_g\big).
\end{equation*}
Setting $\rho=$\texttt{J2}$(\rho,1,0,0)$ and $x=$\texttt{J2}$(x,0,1,0)$
and evaluating the closed-form expression for $\widehat U(\rho,x)$ in
this arithmetic returns, in a single pass, certified enclosures of
$\widehat U$, $\partial_\rho\widehat U$, $\partial_x\widehat U$, and
$\partial_{\rho x}\widehat U$ on the input rectangle. Because every
square-root operand is checked to be strictly positive on the box (the
verifier returns \texttt{None} otherwise, forcing a bisection), the
result is unconditionally correct.

\paragraph{Strict positivity of denominators.}
Every \texttt{J1} and \texttt{J2} call inside the verifier is guarded
against denominator vanishing: a square-root call on a ball whose lower
endpoint is not strictly positive returns \texttt{None}, which is
treated by the branch-and-bound as a forced bisection. In particular,
this guarantees that the closed-form expressions in (3.13)
and (3.17) are evaluated only on subboxes where their square-root
arguments are certified positive.

\subsection{Adaptive branch-and-bound}
\label{app:bnb}

For each verification task the verifier maintains a stack of boxes
$B\subset R$ obtained by recursive bisection of the original rectangle
$R$. The terminal boxes are processed by one of four routines
(\texttt{prove\_lower\_1d}, \texttt{prove\_upper\_1d},
\texttt{prove\_lower\_2d}, \texttt{prove\_upper\_2d\_*}). The generic
loop is the following.

\medskip
\noindent
{\small
\begin{tabular}{rl}
\multicolumn{2}{l}{\textbf{Adaptive bisection scheme}}\\[2pt]
1. & Pop a box $B$ from the stack.\\
2. & Evaluate the relevant interval enclosure $\Phi(B)$ using \texttt{J1}
     or \texttt{J2}.\\
3. & If $\Phi(B)$ is a non-trivial interval and the desired bound\\
   & ($\mathrm{lo}(\Phi(B))>\text{target}$ or
     $\mathrm{up}(\Phi(B))<\text{target}$) is satisfied,\\
   & accept the box; record the certified endpoint.\\
4. & Else if the box is outside the Hill region (e.g.\ $T<0$ on $B$),\\
   & skip the box.\\
5. & Else split $B$ into two subboxes (alternating coordinates,\\
   & biased towards the larger residual width) and push them.\\
6. & Continue until the stack is empty (success) or a box budget is\\
   & exceeded (failure).\\
\end{tabular}
}
\medskip

The 2D bisector \texttt{split2} chooses which coordinate to bisect by
comparing the residual widths $(\rho_{\mathrm{hi}}-\rho_{\mathrm{lo}})/r_s$
and $(x_{\mathrm{hi}}-x_{\mathrm{lo}})/x_s$, where $r_s,x_s$ are
problem-specific length scales (e.g.\ $r_s=\sqrt2-1-0.39$ for the outer
annulus). All decisions inside the bisector are themselves rigorous
because they only compare floating-point endpoints of Arb balls.

The acceptance criterion uses a strict numerical margin (e.g.\
$F_{\mathrm{mid}}>0.078$, not merely $>0$), chosen \emph{a priori} and
not tuned to the run; this ensures that all certified bounds have
non-trivial slack, robust to the choice of working precision.

\subsection{Task 1: boundary barrier (Lemma~13)}
\label{app:T1}

\paragraph{Statement to be proved.}
$\widehat U_a(x)+1>0$ for all $x\in[-1,1]$, with the explicit margin
$\widehat U_a(x)+1>0.037$ on $[-1,9/10]$ and the monotone descent
$\frac{d}{dx}\widehat U_a(x)<-0.20$ on $[9/10,1]$.

\paragraph{Closed form.}
The verifier evaluates the closed-form expression~(3.13) for
$\widehat U_a(x)$ via \texttt{J1} arithmetic on $x$, returning $v=\widehat
U_a$ and $d=\frac{d}{dx}\widehat U_a$ simultaneously. The square-root
arguments of (3.13) are checked positive on each terminal interval; this
is automatic since the arguments are bounded below by explicit positive
constants for $x\in[-1,1]$.

\paragraph{Certificates (256-bit run).}
\begin{center}
\renewcommand{\arraystretch}{1.2}
\resizebox{\textwidth}{!}{%
\begin{tabular}{|l|c|c|c|c|}
\hline
Inequality & Domain & Target & Worst certified endpoint & Boxes\\
\hline
$\widehat U_a(x)+1>0.037$ & $[-1,9/10]$ & $>0.037$ &
$>0.0370030681813$ & 70 acc., 69 bis.\\
$\frac{d}{dx}\widehat U_a(x)<-0.20$ & $[9/10,1]$ & $<-0.20$ &
$<-0.2175374251559$ & 8 acc., 7 bis.\\
\hline
\end{tabular}%
}
\end{center}

\subsection{Task 2: momentum constants $\alpha$ and $\beta$}
\label{app:T2}

\paragraph{Statement to be proved.}
$B_1(\rho,x)<11.67$ and $B_2(\rho,x)<16.10$ on
$[0,\sqrt2-1]\times[-1,1]$, where $B_1,B_2$ are the explicit functions
displayed in the proof of Theorem~14.

\paragraph{Closed form.}
The denominator $D=\lambda h^{2}-\gamma^{2}$ is verified strictly positive
on every terminal box; on such a box, the verifier evaluates
$B_1$ and $B_2$ from their closed forms and certifies the desired upper
bound on the upper Arb endpoint.

\paragraph{Certificates (256-bit run).}
\begin{center}
\renewcommand{\arraystretch}{1.2}
\resizebox{\textwidth}{!}{%
\begin{tabular}{|l|c|c|c|c|}
\hline
Inequality & Domain & Target & Worst certified endpoint & Boxes\\
\hline
$B_1<11.67$ & $[0,\sqrt2-1]\times[-1,1]$ & $<11.67$ &
$<11.6699408441782$ & 56 acc., 55 bis.\\
$B_2<16.10$ & $[0,\sqrt2-1]\times[-1,1]$ & $<16.10$ &
$<16.0991435050964$ & 81 acc., 80 bis.\\
\hline
\end{tabular}%
}
\end{center}
The derived constants are
$\alpha=384-256\sqrt2$, $\beta=27.77$, exactly as quoted in (3.14).

\subsection{Task 3: outer-annulus claims A, B, C (Propositions~16--18)}
\label{app:T3}

The outer annulus $[\bar\rho,\rho_0]\times[-1,1]$ with
$\bar\rho=0.39$, $\rho_0=\sqrt2-1$ is split into a near strip
$[\bar\rho,\rho_0]\times[\cos(0.25),1]$ and a complementary strip
$[\bar\rho,\rho_0]\times[-1,\cos(0.25)]$; the proof of Claims~A and~B
combines a derivative-sign certificate on the near strip with a direct
value certificate on the complementary strip. Claim~C is a one-variable
inequality on $[\bar\rho,\rho_0)$, completed by an analytic Taylor
sliver at $\rho_0$.

\paragraph{Closed form.}
All quantities are obtained from~(3.17) via \texttt{J2} arithmetic.
For the near-strip checks of Claims~A and B the relevant scalar
quantities are $\partial_x\widehat U(\rho,x)$ and
$\partial_{\rho x}\widehat U(\rho,x)$, returned directly as the
\texttt{J2} component $\mathtt{x}$ and $\mathtt{rx}$, respectively. For
the complementary-strip checks one evaluates
$F(\rho,x)=\widehat U(\rho,x)-\widehat U(\rho,1)$ and
$G(\rho,x)=\partial_\rho\widehat U(\rho,x)
-\partial_\rho\widehat U(\rho,1)$ on the same boxes. Claim~C uses
\texttt{J1} arithmetic on the $\theta=0$ slice
$\rho\mapsto\widehat U(\rho,1)$, returning $v$, $d$ and $d_2$
simultaneously; the function
\begin{equation*}
B_0(\rho)=\partial_\rho U(\rho,0)
-\tfrac14\sqrt{8(c-U(\rho,0))}\sqrt{\alpha(c-U(\rho,0))+\beta},
\qquad c=-1,
\end{equation*}
is then assembled from $v$ and $d$ on each subinterval.

\paragraph{Taylor sliver at $\rho_0$.}
The endpoint sliver $[\rho_0-10^{-3},\rho_0)$ is handled
\emph{analytically}, not by bisection. The verifier evaluates
$\sigma(\rho):=-U_{\rho\rho}(\rho,0)$ on the sliver via \texttt{J1}
arithmetic, yielding interval bounds
$\sigma\big([\rho_0-10^{-3},\rho_0]\big)\subset
[19.70962857978,20.29707268124]$.
Setting $\xi:=\rho_0-\rho$ and integrating in Taylor form from $\rho$ to
$\rho_0$, where $\partial_\rho U(\rho_0,0)=0$ and $U(\rho_0,0)=-1$,
gives
\begin{equation*}
\partial_\rho U(\rho,0)\geq \xi S_{\min},\qquad
c-U(\rho,0)\leq \frac{\xi^{2}}{2}S_{\max},
\end{equation*}
with $[S_{\min},S_{\max}]\supset
\sigma([\rho_0-10^{-3},\rho_0])$. Hence positivity of $B_0$ on the
sliver follows from the scalar Arb inequality
$4 S_{\min}^{2}>S_{\max}\big(\tfrac\alpha2\,\varepsilon^{2}S_{\max}+\beta\big)$,
$\varepsilon=10^{-3}$, which is verified directly.

\paragraph{Certificates (256-bit run).}
\begin{center}
\renewcommand{\arraystretch}{1.2}
\resizebox{\textwidth}{!}{%
\begin{tabular}{|l|c|c|c|}
\hline
Check & Target & Worst certified endpoint & Boxes\\
\hline
Claim A near strip:
$\partial_x\widehat U$ & $<-0.0579$ &
$<-0.0579\dots$ (passes) & 64 acc., 63 bis.\\
Claim A far strip:
$\widehat U(\rho,x)-\widehat U(\rho,1)$ & $>10^{-4}$ &
$>0.00010000007993$ & 16108 acc., 16107 bis.\\
Claim B near strip:
$\partial_{\rho x}\widehat U$ & $<-3.70$ &
$<-3.70\dots$ (passes) & 64 acc., 63 bis.\\
Claim B far strip:
$\partial_\rho\widehat U(\rho,x)-\partial_\rho\widehat U(\rho,1)$ &
$>0.039$ & $>0.03912713972$ & 180 acc., 179 bis.\\
Claim C main: $B_0(\rho)$ on $[\bar\rho,\rho_0-10^{-3}]$ & $>4{\cdot}10^{-3}$ &
$>0.00400690596$ & 617 acc., 616 bis.\\
Claim C sliver: analytic Taylor inequality & --- &
$1553.9\dots>563.7\dots$ &
scalar check\\
\hline
\end{tabular}%
}
\end{center}

\subsection{Task 4: middle annulus (the kernel of point~(15) of Referee~2)}
\label{app:T4}

\paragraph{Statement to be proved.}
$F_{\mathrm{mid}}(\rho,x)>0.078$ on
$[0.08,0.39]\times[-1,1]$, restricted to the Hill region
$T(\rho,x):=-1-\widehat U(\rho,x)\geq 0$, where
\begin{equation*}
F_{\mathrm{mid}}(\rho,x)=\rho\,\partial_\rho\widehat U(\rho,x)
-\frac{\rho}{4}\sqrt{8 T(\rho,x)}\sqrt{\alpha\, T(\rho,x)+\beta}.
\end{equation*}
By (3.16) this implies $X(H)>0$ on the corresponding portion of the
energy hypersurface.

\paragraph{Why a fully computer-assisted argument is required.}
On this region neither the analytic strategy of~\cite{AL} nor the
hybrid analytic--numerical strategy used on the outer annulus succeeds,
because, as $\rho$ decreases from $\sqrt2-1$, the absolute minimum of
$U(\rho,\cdot)$ over $\theta$ is attained at points
$\theta=\theta_*(\rho)$ that bifurcate along several curves; the
locations and multiplicities of these minima change in a way that is
hard to follow analytically. This bifurcating behaviour originates from
the additional terms in the denominators of $\mathcal U_1,\mathcal U_2$
produced by the antipodal singularities, and has no counterpart in the
Euclidean planar restricted three-body problem. Consequently we work
directly with interval arithmetic on the full Hill rectangle.

\paragraph{Closed form on a box.}
On a terminal box $B=[\rho_{\mathrm{lo}},\rho_{\mathrm{hi}}]\times
[x_{\mathrm{lo}},x_{\mathrm{hi}}]$, the verifier evaluates
$\widehat U$ and $\partial_\rho\widehat U$ via \texttt{J2} arithmetic,
obtaining interval enclosures
$U_B\supset \widehat U(B)$ and $U_{\rho,B}\supset
\partial_\rho\widehat U(B)$.
\begin{itemize}
\item Set $T_B:=-1-U_B$, an interval enclosure of $T$ on $B$.
If $\mathrm{up}(T_B)<0$, the box is entirely outside the Hill region and
is \emph{skipped}.
\item Otherwise $T_B$ is intersected with $[0,\infty)$ and replaced by
$[0,\mathrm{up}(T_B)]$ (since on the Hill part of $B$ we have
$0\leq T\leq \mathrm{up}(T_B)$). The function
$T\mapsto\sqrt{8T}\sqrt{\alpha T+\beta}$ is increasing on $T\geq 0$, so
$\sqrt{8\,\mathrm{up}(T_B)}\,\sqrt{\alpha\,\mathrm{up}(T_B)+\beta}$ is
a rigorous upper bound for the kinetic correction on $B$.
\item Combining the upper $\rho$-endpoint $\rho_{\mathrm{hi}}$ with the
above kinetic-correction bound, and combining the lower endpoint
$\mathrm{lo}(\rho U_{\rho,B})$ for the principal term, gives a
certified lower bound for $F_{\mathrm{mid}}$ on $B\cap\{T\geq 0\}$.
\end{itemize}
A box is accepted only when this certified lower bound exceeds the
fixed target $0.078$.

\paragraph{Certificate (256-bit run).}
\begin{center}
\renewcommand{\arraystretch}{1.2}
\resizebox{\textwidth}{!}{%
\begin{tabular}{|l|c|c|c|c|}
\hline
Inequality & Domain & Target & Worst certified endpoint & Boxes\\
\hline
$F_{\mathrm{mid}}>0.078$ &
$[0.08,0.39]\times[-1,1]\cap\{T\geq 0\}$ &
$>0.078$ & $>0.0780001634926$ & 1865 acc., 1864 bis.\\
\hline
\end{tabular}%
}
\end{center}

This single check certifies $X(H)>0$ on
$\Sigma^{m_1}_c\cap\pi^{-1}([0.08,0.39]\times \mathbb S^{1})$ for
$c=-1$, including all boundary strips of the middle annulus.

\subsection{Task 5: scalar checks for the explicit cutoff
$\rho_{\mathrm{nc}}=0.08$ (Lemma~20)}
\label{app:T5}

The proof of Lemma~20 is fully analytic. The final decimal comparisons
that conclude the analytic estimate, however, are double-checked by
scalar Arb interval evaluations:

\begin{center}
\renewcommand{\arraystretch}{1.2}
\resizebox{\textwidth}{!}{%
\begin{tabular}{|l|c|c|c|}
\hline
Quantity & Target & Certified endpoint & Margin\\
\hline
$\rho\,\partial_\rho\mathcal U_2$ at $\rho=\delta$ & $>3.099$ &
$3.09943483536\dots$ & $\approx 4.3\cdot 10^{-4}$\\
$\rho\,\partial_\rho\mathcal U_0$ at $\rho=\delta$ & $>-0.159$ &
$-0.15814833996\dots$ & $\approx 8.5\cdot 10^{-4}$\\
$|\rho\,\partial_\rho\mathcal U_1|$ at $\rho=\delta$ & $<0.102$ &
$0.10090797893\dots$ & $\approx 9\cdot 10^{-4}$\\
Kinetic correction & $<1.22$ &
$\leq 1.21650153936\dots$ & $\approx 3.5\cdot 10^{-3}$\\
Final margin
$F(\rho,\theta)>0$ on $0<\rho\leq 0.08$ & $>0$ &
$>1.62387697711$ & explicit\\
\hline
\end{tabular}%
}
\end{center}

The four entries above match the four steps of the analytic proof in
Section~3 line-by-line; the last entry is the certified version of the
final inequality \mbox{$F(\rho,\theta)>2.83-1.22>1.61$.}

\subsection{Precision, runtime, and reproducibility}
\label{app:summary}

All certificates are produced by the same script
\texttt{rigorous\_validated\_numerics.py}, executed at three Arb
precisions: $160$, $200$, and $256$ bits. Every certificate
\emph{passes at every level}; the endpoint values quoted above are
those of the $256$-bit run.

The \emph{binding constraint} is the boundary check
$\widehat U_a(x)+1\geq 0.037$ on $[-1,9/10]$, whose worst certified
lower endpoint is approximately $0.0370031$ -- a margin of about
$3\cdot 10^{-6}$ relative to the prescribed threshold. This is too tight
for $53$-bit (IEEE-double) interval arithmetic to certify; we therefore
recommend $160$ bits as the practical minimum for reproduction, and
$256$ bits as the comfortable cushion under which all reported endpoints
have stabilised.

The complete runtime on the test machine (MacBook Pro, November 2023,
Apple M3 Max chip, 36~GB of RAM) is approximately $10$~seconds at
$256$~bits, and proportionally less at $160$ and $200$~bits. The verifier is single-threaded; no global state is mutated other than the Arb
precision, which is set once at the start of \texttt{main}.

The script can be re-executed with
\begin{equation*}
\texttt{python rigorous\_validated\_numerics.py --prec 256}
\end{equation*}
(or with \texttt{-{}-prec 160} / \texttt{-{}-prec 200}). It depends only
on \texttt{python-flint}~\cite{pythonflint}, installable via
\texttt{python -m pip install python-flint}. The output log lists, for
each certificate, the certified terminal endpoint, the number of
accepted boxes, the number of bisections, and the run-wide pass/fail
flag. The corresponding log file is supplied as
\texttt{validated\_numerics\_user\_200\_256\_run.log}.

\paragraph{Summary of certificates.}
The following table summarises all the certified inequalities used in
the paper. ``Acc.'' denotes accepted boxes (or terminal intervals);
``Bis.'' denotes bisections; ``Margin'' is
$\mathrm{lo}(\Phi(B^{*}))-\mathrm{target}$ for lower-bound checks and
$\mathrm{target}-\mathrm{up}(\Phi(B^{*}))$ for upper-bound checks at
the worst certified box $B^{*}$ of the run.
\begin{center}
\renewcommand{\arraystretch}{1.15}
\resizebox{\textwidth}{!}{%
\begin{tabular}{|l|l|c|c|c|c|}
\hline
Used in & Inequality & Region & Acc. & Bis. & Margin\\
\hline
Lemma~13 & $\widehat U_a(x)+1>0.037$ & $[-1,9/10]$ & 70 & 69 & $\sim 3{\cdot}10^{-6}$\\
Lemma~13 & $\frac{d}{dx}\widehat U_a(x)<-0.20$ & $[9/10,1]$ & 8 & 7 & $\sim 1.8{\cdot}10^{-2}$\\
$\alpha,\beta$ & $B_1<11.67$ & $[0,a]\times[-1,1]$ & 56 & 55 & $\sim 6{\cdot}10^{-5}$\\
$\alpha,\beta$ & $B_2<16.10$ & $[0,a]\times[-1,1]$ & 81 & 80 & $\sim 9{\cdot}10^{-4}$\\
Prop.~16 (A) & $\partial_x\widehat U<-0.0579$ &
$[\bar\rho,\rho_0]\times[\cos 0.25,1]$ & 64 & 63 & strict\\
Prop.~16 (A) & $\widehat U(\rho,x)-\widehat U(\rho,1)>10^{-4}$ &
$[\bar\rho,\rho_0]\times[-1,\cos 0.25]$ & 16108 & 16107 & $\sim 8{\cdot}10^{-11}$\\
Prop.~17 (B) & $\partial_{\rho x}\widehat U<-3.70$ &
$[\bar\rho,\rho_0]\times[\cos 0.25,1]$ & 64 & 63 & strict\\
Prop.~17 (B) & $\partial_\rho(\widehat U-\widehat U|_1)>0.039$ &
$[\bar\rho,\rho_0]\times[-1,\cos 0.25]$ & 180 & 179 & $\sim 1.3{\cdot}10^{-4}$\\
Prop.~18 (C) & $B_0(\rho)>4{\cdot}10^{-3}$ &
$[\bar\rho,\rho_0-10^{-3}]$ & 617 & 616 & $\sim 7{\cdot}10^{-6}$\\
Prop.~18 (C) & analytic Taylor sliver &
$[\rho_0-10^{-3},\rho_0]$ & --- & --- & $1553.9>563.7$\\
Theorem~14 & $F_{\mathrm{mid}}>0.078$ &
$[0.08,0.39]\times[-1,1]$ & 1865 & 1864 & $\sim 1.6{\cdot}10^{-7}$\\
Lemma~20 & 4 scalar inequalities & --- & 4 & 0 & explicit\\
\hline
\end{tabular}%
}
\end{center}

Together with the analytic estimates given in the body of the paper,
these certificates establish $X(H)>0$ on the punctured open disk
$(0,\sqrt 2-1)\times \mathbb S^{1}$ at the critical value $c=-1$, and
hence (via the second part of the proof of Theorem~14) on
$\Sigma^{m_1}_c$ for every $c<-1$.

\begin{remark}\label{rem:decoupling}
The decomposition into Tasks 1--5 isolates each numerical claim on a
small, independent rectangle, so the verification is incremental: a
failure on any single task is reported with the offending subbox and
the residual interval, leaving the rest of the run unaffected. In our
testing this proved valuable when retuning thresholds during the
revision; the procedure
\begin{enumerate}
\item raise the box budget,
\item re-run at a higher precision,
\item if necessary, slacken the target by a fixed offset and rerun,
\end{enumerate}
gives a transparent way to diagnose any tightness issue. In the final
run no such adjustment was needed at $160$, $200$, or $256$ bits.
\end{remark}

\end{document}